\renewcommand{\[}{\begin{equation}}
\renewcommand{\]}{\end{equation}}
\newcommand{\ip}[2]{\langle{#1},{#2}\rangle}
\newcommand{\ipp}[2]{\langle\!\langle{#1},{#2}\rangle\!\rangle}
\newcommand{\br}[1]{{\langle{#1}\rangle}} 
\newcommand{\pbr}[1]{\langle{#1}\rangle^\backprime}
\newcommand{\Br}[1]{{\{{#1}\}}} 
\newcommand{\pBr}[1]{{\{{#1}\}^{\backprime}\!\hsp}} 
\newcommand{\cBr}[1]{\{{#1}\}^\circ} 
\newcommand{\oeq}[1]{\overset{#1}{=}}
\newcommand{\hsp}{\hspace{-1pt}} 
\newcommand{\hs}{\hspace{1pt}}
\def\K{{\mathbb K}}
\def\Z{\mathbb{Z}} 
\def\N{\mathbb{N}} 
\def\lra{\longrightarrow} 
\def\id{\mathrm{id}} 
\newcommand{\msum}[2]{\underset{#1}{\overset{#2}{\mbox{$\sum$}}}}
\def\ra{\triangleleft} 
\def\la{\triangleright} 
\def\eps{\varepsilon}
\def\ot{{\otimes}} 
\def\uc{\underline{\circ}}
\def\um{\underline{m}}
 \def\vare{\varepsilon}
 \def\ust{\hs\underline{*}\, }
 \def\uD{\underline{\Delta}}
 \def\stac#1{\raise-.2cm\hbox{$\stackrel{\displaystyle\otimes}{\scriptscriptstyle{#1}}$}}
\def\cten#1{\raise-.2cm\hbox{$\stackrel{\displaystyle\widehat{\otimes}}{\scriptscriptstyle{#1}}$}}
\def\Label#1{\label{#1}\ifmmode\llap{[#1] }\else 
\marginpar{\smash{\hbox{\tiny [#1]}}}\fi} 
\def\Label{\label} 
\newtheorem{proposition}{Proposition}[section]
\newtheorem{lemma}[proposition]{Lemma} 
\newtheorem{corollary}[proposition]{Corollary} 
\newtheorem{theorem}[proposition]{Theorem} 
\theoremstyle{definition} 
\newtheorem{definition}[proposition]{Definition}
\newtheorem{example}[proposition]{Example} 
\theoremstyle{remark} 
\newtheorem{remark}[proposition]{Remark} 
\def\ot{\otimes}
\def\uot{\ul{\otimes}}
\def\ul{\underline}
\newcounter{mnotecount}[section]
\renewcommand{\themnotecount}{\thesection.\arabic{mnotecount}}
\newcommand{\mnote}[1]
{\protect{\stepcounter{mnotecount}}$^{\mbox{\footnotesize
$
\bullet$\themnotecount}}$ \marginpar{
\raggedright \tiny\em
$\bullet$\themnotecount: #1} }
\begin{document} 
\title[Duality for infinite-dimensional braided bialgebras]{Duality for 
infinite-dimensional braided bialgebras and their (co)modules} 
\author[E. Wagner]{Elmar Wagner}
\address{Instituto de F\'isica y Matem\'aticas, Universidad Michoacana de San Nicol\'as de Hidalgo, 
Edificio C-3, Ciudad Universitaria, 
58040 Morelia, Michoac\'an, M\'exico}
\email{elmar.wagner@umich.mx}

\begin{abstract} 
The paper presents a detailed description of duality for braided algebras, coalgebras, 
bialgebras, Hopf algebras and their modules and comodules in the infinite setting. 
Assuming that the dual objects exist, it is shown how a given braiding induces 
compatible braidings for the dual objects, and how actions (resp.\ coactions) 
can be turned into coactions (resp.\ actions) of the dual coalgebra (resp.\ algebra), 
with an emphasis on braided bialgebras and their braided (co)module algebras. 
\end{abstract} 
\subjclass[2010]{Primary 16T10; Secondary 16T15, 18D10} 
\keywords{Duality, braided bialgebra, braided Hopf algebra, (co)module algebra}

\maketitle

\section{Introduction} 

The objective of this paper is to give a detailed description of duality for braided algebras, coalgebras, 
bi- and Hopf algebras, and their modules and comodules in the infinite setting.  
The interest in duality for these structures goes back to Majid  \cite{Maj92, Maj94a},  
who did much of the pioneering work on braided Hopf algebras 
\cite{Maj91,Maj91a,Maj93,Maj93a,Maj95}, 
and to Takeuchi \cite{T99, T}. Focusing on specific applications, duality 
for braided Hopf algebras has been studied by 
Lyuvashenko \cite{L} and by Guo and Zhang \cite{GZ} in the context of integrals, 
and by Da Rocha, J.A. Guccione and J. J. Guccione \cite{RGG} 
in connection with crossed products. 
The theory of braided Hopf crossed products reveals in particular the 
relevance of braided (co)module algebras \cite{GG,GGV} which 
serve as a guiding principle for this paper. 
Duality for infinite braided Hopf algebras, 
under the assumption that the braiding is symmetric, 
has been considered in the Yetter-Drinfeld module category
in the light of the Blattner-Montgomery duality theorem 
by Han and Zhang \cite{HZ}, and by  Cheng, Xu and  Zhang \cite{CXZ}. 

In the finite setting, duality is most conveniently studied in a rigid 
monoidal category \cite{Maj94a,T99}. The problem in the infinite setting is the lack of a 
so-called coevaluation map. To avoid this problem, we do not follow a 
categorical approach but define the dual objects by a set of conditions,  
similar to the path taken by Takeuchi in \cite{T}. 
In particular, we will neither prove the existence nor the uniqueness of a 
dual space with the desired properties, so we will not define 
a functor into a dual category. This may be especially useful 
in a topological setting, where it is not always practical 
to work with full dual space, for instance if there are unbounded (braiding) 
operators involved. However, the entire paper is kept completely  algebraic 
even though the more interesting examples arise in a topological framework. 
In this sense, one may consider all tensor products as algebraic 
tensor products of linear spaces over a field~$\K$. 

Our first aim is to establish a duality theory for infinite-dimensional 
braided bialgebras and Hopf algebras. This will be done in Section \ref{sec4}. 
The guiding principle emanates from the definition of 
a dual pairing between braided bialgebras in Definition \ref{dualp}. 
To take into account the braided setting, we include in this definition 
a braiding between dual spaces so that the dual pairing 
of two-fold tensor products can be realized by 
evaluating simultaneously adjacent tensor factors. 
The fundamental idea of our approach is that all structures 
on dual spaces should be induced from the given ones, 
including the braiding appearing in the dual pairing.  
In this sense, our method is constructive, 
only the existence of a dual space with the required properties will be assumed, 
wheras all algebraic properties will be deduced from the original source.

To develop the theory step by step, we start by elaborating a 
duality theory for infinite-dimensional algebras and coalgebras 
in Propositions \ref{lust} and \ref{lcop}.  Before doing so, 
we show in Lemma \ref{lembraided} how a given braiding induces braidings on a dual space  
and between the space and its dual which are compatible with 
additional algebraic structures like multiplication or comultiplication. 
Our definition of a product or coproduct on the dual space 
is intimately related to the compatibility conditions of a dual pairing. 
The construction of a dual braided bialgebra will be achieved in Theorem~\ref{dH} 
under certain assumptions on the chosen dual space which 
guarantee that the induced braidings define bijective maps into the 
correct tensor products and that the product and coproduct are well-defined. 
Proposition \ref{propdual} shows that the construction is reflexive in the sense 
that taking twice the dual gives back the same braided bialgebra. 
The extension of these results to braided Hopf algebras requires only a minor 
condition regarding the antipode. 

In Section \ref{sec5}, we address duality for braided modules and comodules. 
The starting point is again to induce new braidings for dual spaces 
from a  given braiding between a (co)algebra and a (co)module in such a way that 
the compatibility properties are maintained. 
This will be done  step by step in Lemmas \ref{dbvs}-\ref{bUW}, 
each time replacing one of the two involved spaces by a dual space. 
Since a left braided vector space induces the structure of 
a right braided vector space on duals, and vice versa, we will frequently use the 
inverse of a braiding to recover a left-handed or right-handed version. 
In fact, one of the purposes of this paper is to single out the correct braidings and 
formulas so that it may serve as a reference for others. 

Theorems \ref{mcom} and \ref{mmm} are the main results of Section \ref{sec5}. 
There it is shown how to transform a braided comodule into a braided module of 
a dual (bi)algebra, and a braided module into a braided comodule of 
a dual co- or bialgebra. Interestingly, in the latter case, it will not exactly yield  
a comodule of a dual bialgebra but a version of it that corresponds to taking twice 
the braided opposite and co-opposite bialgebra. These structures, where 
the product or coproduct is flipped by a power of the braiding, 
are discussed at greater length in Section~\ref{sec3} since such considerations 
do play a role in subsequent results. For instance, they justify to present 
only one version of a braided dual (co)algebra in Propositions \ref{lust} and \ref{lcop}, 
other versions can be obtained by combining the constructed (co)product with 
powers of the braiding. Moreover, twisting products, coproducts, actions or coactions 
with a braiding may give rise to whole families of new structures as illustrated  
in Proposition \ref{lemmD} and Corollary \ref{corind}. 
In Section~\ref{sec3}, we also review the significance of the antipode 
in the braided setting for turning left (co)actions into right (co)actions and vice versa. 

In Proposition \ref{mco}, we dualize a coaction on a comodule 
to an action of a dual algebra on a dual of the comodule,  
and in Proposition \ref{com}, we dualize 
an action on a module to a coaction of a dual coalgebra on a dual of the module.
Despide the fact that our main interest lies in module and comodule algebras 
of braided bialgebras, Propositions \ref{mco} and \ref{com} do not consider these 
topics since we would then have to introduce the dual objects, namely 
module and comodule coalgebras. To keep the length of the paper reasonable, 
we refrain from introducing (co)module coalgebras. 
With a detailed description of braided (co)module algebras at hand, 
it should be clear how to dualize these notions to braided (co)module coalgebras. 

Also for brevity, we do not elaborate an example in full detail but sketch only that 
graded braided (co)algebras, bi- and Hopf algebras and their graded (co)modules 
provide infinite-dimensional examples. For the same reason, we completely avoid braid diagrams 
since the proofs presented by braid diagrams would occupy considerably more space. 
Instead of braid diagrams, we introduce a Sweedler-type notation and annotate the 
employed relations over the equality signs. Once the reader gets used to this notation, 
it shouldn't be a problem to draw the corresponding braid diagrams, 
one only has to be careful with the chosen crossings. For instance, 
the crossings of the induced braidings 
have to be compatible with the given braidings, and 
the braidings obtained from an inverse braiding 
should be denoted differently than those induced from a given one. Although the proofs 
are rather straightforward, we present most of them in order to show where 
the involved braidings and compatibility relations are used.

\section{Preliminaries on braided bialgebras and their (co)modules} 

In this section, we give a working definition for braided bi- and Hopf algebras 
without using braided tensor categories. 
The reason is that we want to establish a duality theory for 
infinite-dimensional braided bialgebras. 
The categorical approach works well in rigid monoidal categories, 
but for infinite-dimensional examples, there is a problem with the rigorous definition of 
a co-evaluation map, see e.g.~\cite{RGG,T}.  
Moreover, a dual braided bialgebra may not exist, and if it exists, it may not be unique, 
therefore we do not aim at defining a functor into a dual category.  

Throughout this paper, the letter $\K$ stands for an arbitrary field. 
A braiding for a vector space $V$ 
is a bijective linear map $\Psi_{VV}: V\ot V \to V\ot V$ fulfilling the Yang--Baxter equation 
\[   \label{YBE} 
(\Psi_{VV} \ot \id) \circ (\id \ot  \Psi_{VV}) \circ ( \Psi_{VV} \ot \id) = ( \id \ot \Psi_{VV}) \circ (\Psi_{VV} \ot \id) \circ (\id \ot  \Psi_{VV}) . 
\] 
Let $V$ be a braided vector space. 
A left $V$-braided vector space is a vector space $W$ together with a bijective linear map $\Psi_{VW}: V\ot W \to W\ot V$ 
such that 
\[   \label{VW} 
(\Psi_{VW} \ot \id) \circ (\id \ot  \Psi_{VW}) \circ ( \Psi_{VV} \ot \id) = ( \id \ot \Psi_{VV}) \circ (\Psi_{VW} \ot \id) \circ (\id \ot  \Psi_{VW}) . 
\] 
Similarly, $V$ is called a right $W$-braided vector space if $W$ is a braided vector space and the bijective linear map
$\Psi_{VW}: V\ot W \to W\ot V$ satisfies 
\[   \label{WV} 
 ( \id \ot \Psi_{VW}) \circ (\Psi_{VW} \ot \id) \circ (\id \ot  \Psi_{WW})
= (\Psi_{WW} \ot \id) \circ (\id \ot  \Psi_{VW}) \circ ( \Psi_{VW} \ot \id) . 
\] 
The archetypal example, also in the case $V=W$, is given by the flip:  
\[ \label{flip} 
\tau : V\ot W \lra W\ot V, \quad \tau (v\ot w):= w\ot v. 
\]

Let $V$ be an algebra with multiplication $m_V :V\ot V \to V$. 
If $W$ is a left $V$-braided vector space, or if $V$ is a right $W$-braided vector space, 
then we say that the braiding $\Psi_{VW}$ is compatible with the multiplication if 
\[       \label{AWm} 
\Psi_{VW}\circ (m_V \ot \id) = (\id \ot m_V) \circ (\Psi_{VW} \ot \id) \circ (\id \ot \Psi_{VW})
\]
and, if $1\in V$, 
\[ \label{1W}
\Psi_{VW}(1 \ot w) = w\ot 1 , \qquad w\in W. 
\]
Assume now that $W$ is an algebra with multiplication $m_W :W\ot W \to W$ 
and that $W$ is a left $V$-braided vector space or that $V$ is a right $W$-braided vector space. 
Then we say that the braiding $\Psi_{VW}$ is compatible with the multiplication if 
\[   \label{VAm} 
 \Psi_{VW}\circ (\id\ot m_W) = (m_W\ot \id ) \circ (\id \ot \Psi_{VW}) \circ (\Psi_{VW} \ot \id) 
\]
and 
\[\label{V1}
 \Psi_{VW} (v \ot 1) = 1\ot v, \qquad v\in V. 
\]

A braided algebra is an algebra $(A,m)$  which is a braided vector space such that 
the braiding is compatible with the multiplication. 
In this case, it follows from  \eqref{AWm} and \eqref{VAm} that 
\[     \label{Pmm}
\Psi_{AA}  \circ (m\ot m) 
= (m\ot m) \circ (\id \ot \Psi_{AA} \ot \id) \circ (\Psi_{AA} \ot \Psi_{AA} ) \circ (\id \ot \Psi_{AA} \ot \id). 
\] 
Clearly, each algebra $A$ becomes a braided algebra with the usual flip 
defined in \eqref{flip} as braiding isomorphism. 

Suppose now that $(V,\Delta,\eps)$ is a coalgebra and $W$ is a left $V$-braided vector space
or $V$ is a right $W$-braided vector space. 
We say that the 
braiding $\Psi_{VW} : V\ot W \to W\ot V$ is compatible with the comultiplication  of $V$ if 
\[ \label{PHW} 
(\id \ot \Delta)\circ   \Psi_{VW} =  (\Psi_{VW} \ot \id)  \circ (\id \ot \Psi_{VW}) \circ  (\Delta \ot \id), \ \ 
( \id\ot \eps)\circ  \Psi_{VW} = \eps\ot \id. 
\]
If $(W,\Delta,\eps)$ is a coalgebra and $V$ is a right $W$-braided vector space
or $W$ is a left $V$-braided vector space, then 
the analogous definitions read 
\[  \label{PVH} 
(\Delta \ot \id)\circ \Psi_{VW} =   (\id \ot \Psi_{VW}) \circ (\Psi_{VW} \ot \id)  \circ (\id \ot \Delta), \ \ 
(\eps \ot \id)\circ  \Psi_{VW} = \id \ot \eps. 
\]  
A braided coalgebra is a coalgebra $(H,\Delta,\eps)$  which is a braided vector space such that 
the braiding is compatible with the comultiplication. In this case, combining  
\eqref{PHW} and \eqref{PVH} yields 
\[  \label{PDD}
(\Delta\ot \Delta)\circ \Psi_{HH} = (\id \ot \Psi_{HH}  \ot \id ) \circ (\Psi_{HH} \ot \Psi_{HH} ) \circ 
(\id \ot \Psi_{HH}  \ot \id )\circ (\Delta\ot \Delta). 
\]
As for algebras, each coalgebra becomes a braided coalgebra with the braiding defined 
by the flip $\tau$ given in \eqref{flip}.

The compatibility conditions permit to extend the (co)algebra structures to tensor products. 
If  $(A,m_A)$ and $(B,m_B)$ are algebras such that 
$A$ is a right $B$-braided vector space, $B$ is a left $A$-braided vector space, and 
the braiding $\Psi_{BA}: B\ot A\to A\ot B$ is compatible with the multiplications of $A$ and $B$, then 
\[  \label{ulm}
\pmb{m} : (A\ot B)\ot (A\ot B) \lra A\ot B, \quad \pmb{m}:= (m_A\ot m_B)\circ (\id\ot \Psi_{BA} \ot \id), 
\] 
defines a product on $A\ot B$ turning it into an associative algebra denoted by $A\,\uot\, B$. 
If $A$ and $B$ are unital, then $1 \ot 1$  yields the unit of $A\,\uot\, B$. 

If  $H$ and $G$ are coalgebras such that 
$H$ is a right $G$-braided vector space, $G$ is a left $H$-braided vector space, and 
the braiding $\Psi_{HG}: H\ot G\to G\ot H$ is compatible with the comultiplications of $H$ and $G$,  
then the coproduct 
\[  \label{ulD} 
\pmb{\Delta}  : H \ot G \lra  (H\ot G)\ot (H\ot G), \quad 
\pmb{\Delta}  := (\id \ot \Psi_{HG} \ot \id) \circ (\Delta \ot \Delta),  
\]
turns $H\ot G$ into a coalgebra with counit $\pmb{\eps} := \eps\ot \eps$.

Recall that, for a coalgebra $(H, \Delta,\eps)$ and a unital algebra $(A,m)$, 
the space $L(H,A)$ of all linear mappings from $H$ to $A$ becomes an 
associative unital algebra under the convolution product  
\[   \label{conv} 
(\phi\ast \psi)(h):= m\circ (\phi\ot \psi) \circ \Delta (h), \quad h\in H, \ \ \phi, \psi\in L(H,A), 
\]
and with unit $h \mapsto \eps(h) 1$. In this picture, 
the antipode of a Hopf algebra can be viewed as 
the convolution inverse of the identity map $\id : H \to H$. 

The central objects of this paper are braided bialgebras which will be defined in the definition below. 
We include there the definition of a braided Hopf algebra although the existence of an antipode will 
play rather a minor role in our presentation. 

\begin{definition}[\cite{Maj91,T}]   \label{defab}
A braided bialgebra is a unital algebra $(H,m)$ together with a coalgebra structure $(H, \Delta,\eps)$  
and a braiding  $\Psi_{HH} : H\ot H \to H$ such that the following com\-pa\-ti\-bility conditions hold: 
$H$ is a braided vector space, the braiding $\Psi_{HH}$ is compatible 
with the multiplication and the comultiplication of $H$, 
and the coproduct is an algebra homomorphism $\Delta : H \to H\,\uot\, H$, i.e.,  
\[   \label{Dcm}
\Delta \circ  m = (m\ot m) \circ (\id \ot \Psi_{HH} \ot \id) \circ (\Delta\ot \Delta). 
\] 
A braided Hopf algebra is a braided bialgebra $H$ such that the identity map has a 
convolution inverse  $S:H\to H$ called the antipode. 
\end{definition} 

For a braided Hopf algebra $H$,  it can be shown that 
\begin{align} 
\begin{split}\label{Sbraid} 
&  \Psi_{HH}  \circ (S \ot \id ) = (\id \ot S) \circ \Psi_{HH}, \quad 
\Psi_{HH}  \circ (\id \ot S) = (S \ot \id ) \circ \Psi_{HH}, \\ 
&S \circ m = m \circ  \Psi_{HH} \circ (S \ot S), \quad \Delta \circ S 
= \Psi_{HH}  \circ (S \ot S) \circ  \Delta, \quad  \eps \circ S = \eps, 
\end{split} 
\end{align} 
see e.g.\ \cite{Maj94a,T}. 

If $A$ and $B$ are braided bialgebras with a braiding $\Psi_{AB}$ satisfying the compatibility conditions 
on the multiplication and comultiplication, then $A\,\uot\, B$ becomes a 
braided bialgebra with braiding 
\[ \label{braidAB} 
\Psi_{A\ot B,\, A\ot B}  
:=   (\id \ot \Psi_{AB}\ot \id) \circ (\id\ot \id\ot\Psi_{BB} ) \circ  (\Psi_{AA}\ot \id\ot\id ) \circ (\id \ot \Psi_{AB}^{-1}\ot \id), 
\] 
and multiplication and comultiplication defined in \eqref{ulm} and \eqref{ulD}, respectively. 

The following presentation of actions and coactions in the braided setting is taken from \cite{RGG}. 
Let $(A,m)$ be a braided (unital) algebra, and let $W$ be a left $A$-braided vector space
such that the braiding is compatible with the multiplication. 
We say that $W$ is a braided left $A$-module 
if there is a map $\nu_L : A \ot W \to W$ satisfying 
\begin{align}  
& \nu_L\circ (\id \ot \nu_L) = \nu_L \circ (m \ot \id) , \quad \nu_L ( 1\ot w) = w, \ \ w\in W,  \label{anu} \\
&\Psi_{AW} \circ (\id \ot \nu_L) = (\nu_L \ot \id)\circ (\id \ot \Psi_{AW} ) \circ (\Psi_{AA} \ot \id). \label{bnu} 
\end{align}
Equation \eqref{anu} says that $\nu_L$ is an algebra action in the usual sense, 
and \eqref{bnu} means that $\nu_L$ is compatible with the braiding. 
A braided right $A$-module $V$ is defined analogously, i.e., 
$V$ is a left $A$-braided vector space, the braiding is compatible with the multiplication, and 
the right action $\nu_R : V \ot A \to V$ satisfies 
\begin{align}
& \nu_R\circ (\nu_R \ot \id) = \nu_R \circ (\id \ot m) , \quad \nu_R ( v \ot 1) = v, \ \ v\in V, \label{amu} \\
&\Psi_{VA} \circ (\nu_R \ot\id ) = (\id \ot\nu_R )\circ (\Psi_{VA} \ot\id  ) \circ (\id \ot\Psi_{AA} ).  \label{bmu} 
\end{align}
Applying the inverse braidings to the compatibility relations yields 
\begin{align} \label{nulinv}
&(\id \ot \nu_L)\circ (\Psi_{AA}^{-1} \ot \id)\circ (\id \ot \Psi_{AW}^{-1})  = \Psi_{AW}^{-1} \circ (\nu_L \ot \id), \\
&(\nu_R \ot\id )\circ (\id \ot\Psi_{AA}^{-1})\circ (\Psi_{VA}^{-1} \ot\id)  \, = \, \Psi_{VA}^{-1}\circ (\id \ot\nu_R ).
\label{nurinv}
\end{align}
To shorten notation, we often write 
\[ \label{na}
\nu_L(a \ot w)  :=  a\la w , \quad  \nu_R(v\ot a) := v \ra a, \quad w\in W, \ \, v\in V, \ \, a\in A. 
\]

Let $(H,\Delta, \eps)$ be a braided coalgebra.   
Let $V$ be a right $H$ braided vector space such that the braiding is compatible with 
the comultiplication. 
Recall that a left coaction on a vector space $V$ is a linear map 
$\rho_L : V\to H\ot V$ satisfying 
\[  \label{Droh} 
(\id\ot \rho_L )\circ \rho_L  = (\Delta \ot\id)\circ \rho_L , \quad (\vare \ot \id)\circ \rho_L = \id. 
\]
We say that the coaction is compatible with the braiding, if 
\[  \label{brL} 
(\id \ot \rho_L)\circ \Psi_{VH} =   ( \Psi_{HH}\ot \id) \circ (\id\ot \Psi_{VH} )  \circ (\rho_L \ot \id ). 
\] 
In this case, $V$ is called a braided left $H$-comodule. 

Analogously, for a braided right $H$-comodule $W$ with a right coaction $\rho_R :W\to W\ot H$, 
we require that $W$ is a left $H$ braided vector space and that 
\begin{align}   \label{rohD} 
&(\rho_R\ot \id)\circ \rho_R  = (\id \ot\Delta)\circ \rho_R , \quad ( \id \ot \vare)\circ \rho_R = \id,  \\
  \label{brR} 
&(\rho_R \ot \id)\circ \Psi_{HW} =   (\id \ot \Psi_{HH}) \circ (\Psi_{HW} \ot \id)  \circ (\id \ot \rho_R). 
\end{align}  

Note that we defined a braided \emph{left} $H$-comodule for a \emph{right} $H$ braided vector space 
and vice versa. By Lemma \ref{Linv} below, the inverse braidings 
turn a  right (resp.\ left) $H$ braided vector space into a left (resp.\ right) $H$ braided vector space. 
For the inverse braidings, the compatibility conditions read 
\begin{align}
&(\id\hsp\ot\hsp\Psi_{VH}^{-1} ) \hsp \circ\hsp ( \Psi_{HH}^{-1}\hsp\ot\hsp\id) \hsp\circ\hsp (\id\hsp\ot\hsp\rho_L) 
= (\rho_L \hsp\ot\hsp\id )\hsp\circ\hsp \Psi_{VH}^{-1},\\
 \label{invPHV} 
&(\Psi_{HW}^{-1}\hsp\ot\hsp \id)\hsp\circ\hsp (\id\hsp\ot\hsp \Psi_{HH}^{-1})\hsp\circ\hsp (\rho_R\hsp\ot\hsp \id)
= (\id\hsp\ot\hsp \rho_R)\hsp\circ\hsp \Psi_{HW}^{-1}.
\end{align}

The objects of our interest are $H$-module algebras and $H$-comodule algebras for  
a braided bialgebra $H$, so we will highlight them in a separate definition. 
\begin{definition}     \label{modalg} 
Let $H$ be a braided bialgebra and let $B$ be a braided algebra
such that  $B$ is a left  $H$-braided vector space and the 
braiding $\Psi_{HB}$  is compatible  
with the multiplications of $H$ and $B$
and with the comultiplication of $H$. 

Assume that $B$ is a braided left $H$-module with left action $\nu_L : H \ot B \to B$. 
We say that $B$ is a braided left $H$-module algebra if 
the left action $\nu_L$ and the multiplication $m_B$  of $B$ satisfy the compatibility condition 
\[ \label{num} 
\nu_L \circ (\id \ot m_B) = m_B\circ (\nu_L \ot \nu_L)\circ (\id \ot \Psi_{HB} \ot \id)\circ (\Delta \ot \id\ot \id). 
\] 
If $1\in B$, then it is additionally required that 
\[   \label{nu1} 
\nu_L (f \ot 1) = \eps(f) 1,\quad f\in H. 
\]

Assume that $B$ is a braided right $H$-comodule with right coaction $\rho_R :B\to B\ot H$. 
Then $B$ is called a braided right $H$-comodule algebra, if 
\[  \label{rhoRm} 
\rho_R \circ m_B = (m_B \ot m_H)     \circ (\id \ot \Psi_{HB} \ot \id )\circ (\rho_R \ot \rho_R),  
\]
If $1\in B$, then it is additionally required that 
\[ \label{rR1} 
\rho_R(1)=1\ot 1. 
\]

A braided right $H$-module algebra and braided left $H$-comodule algebra are defined 
analogously by flipping the tensor products and replacing $\Psi_{HB}$ by $\Psi_{BH}$. 
\end{definition}  

\begin{remark}     \label{rem1} 
The dual notions of $H$-module algebra and $H$-comodule algebra are 
$H$-module coalgebra and $H$-comodule coalgebra, respectively. The compatibility 
conditions are the dual versions of those in Definition \ref{modalg}. 
To keep the size of the paper in reasonable limits, we will not discuss these structures here. 
\end{remark}

Throughout our presentation of duality, we will make 
frequent use of the inverse of a given braiding. 
For later reference, we finish this section with a lemma 
that summarizes some properties of inverse braidings.  
It is proven by applying repeatedly the corresponding inverse morphism on 
both sides of the defining relations. 
\begin{lemma} \label{Linv}
Let $H$ be a braided vector space with braiding $\Psi_{HH}$. 
Then $\Psi_{HH}^{-1}$ defines a braiding on $H$. 
If \,$V$ carries the structure of 
a left (resp.\ right) $H$-braided vector space with respect to $\Psi_{HH}$, 
then it does so with respect to $\Psi_{HH}^{-1}$, and it becomes a 
right (resp.\ left) $H$-braided vector space with respect to $\Psi_{HV}^{-1}$ (resp.\ $\Psi_{VH}^{-1}$). 
If $H$ or $V$ is an algebra and $\Psi_{HV}$ (resp.\ $\Psi_{VH}$) is compatible with the multiplication, 
then $\Psi_{HV}^{-1}$ (resp.\ $\Psi_{VH}^{-1}$) is also compatible with the multiplication. 
If $H$  or $V$ is a coalgebra and $\Psi_{HV}$ (resp.\ $\Psi_{VH}$) is compatible with the comultiplication, 
then  $\Psi_{HV}^{-1}$ (resp.\ $\Psi_{VH}^{-1}$) is also compatible with the comultiplication. 
In particular, if $H$ is a braided (co)algebra with respect to $\Psi_{HH}$, then it is also one 
with respect to $\Psi_{HH}^{-1}$. 
\end{lemma}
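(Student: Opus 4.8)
The plan is to prove each clause of Lemma~\ref{Linv} by the same elementary device: take the defining identity, which is an equality of compositions of $\Psi$'s, $m$'s, $\Delta$'s and identity maps, and precompose and postcompose with suitable inverse morphisms so as to solve for the inverted braiding. The Yang--Baxter equation \eqref{YBE} for $\Psi_{HH}^{-1}$ follows from \eqref{YBE} for $\Psi_{HH}$ by inverting both sides (each side is a composite of three bijections, so each side is invertible) and using that $(f\circ g\circ h)^{-1}=h^{-1}\circ g^{-1}\circ f^{-1}$ together with $(\Psi_{HH}\ot\id)^{-1}=\Psi_{HH}^{-1}\ot\id$ and $(\id\ot\Psi_{HH})^{-1}=\id\ot\Psi_{HH}^{-1}$; the two sides simply swap roles. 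That $V$ remains a left (resp.\ right) $H$-braided vector space with respect to $\Psi_{HH}^{-1}$ (keeping $\Psi_{VH}$ or $\Psi_{HV}$ unchanged) is proven the same way from \eqref{VW} (resp.\ \eqref{WV}): all the maps occurring are bijective, so inverting the whole equation and reorganizing gives back an identity of the same shape with $\Psi_{HH}$ replaced by $\Psi_{HH}^{-1}$.

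For the statement that a left $H$-braided vector space becomes a \emph{right} $H$-braided vector space with respect to $\Psi_{HV}^{-1}$ (and the mirror claim with $\Psi_{VH}^{-1}$), I would start from \eqref{VW}, which relates $\Psi_{VW}$ and $\Psi_{VV}$, and rewrite it in terms of $\Psi_{VW}^{-1}:W\ot V\to V\ot W$. Concretely, one composes \eqref{VW} on both sides with the appropriate tensor powers of $\Psi_{VW}^{-1}$ and $\Psi_{VV}^{-1}$; after cancelling the resulting $\Psi\circ\Psi^{-1}$ pairs one is left with exactly the equation \eqref{WV} with the roles of the two spaces interchanged, i.e.\ the condition that $H$ be a right $V$-braided vector space for the map $\Psi_{HV}^{-1}$ (here taking $W=H$ in the notation of \eqref{WV}). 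The bookkeeping of which factor each inverse acts on is the only delicate point, and it is handled by noting that throughout we only use the elementary rules $(\id\ot f)^{-1}=\id\ot f^{-1}$, $(f\ot\id)^{-1}=f\ot f^{-1}$---correction: $(f\ot\id)^{-1}=f^{-1}\ot\id$---and functoriality of $\ot$.

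For the multiplication-compatibility clauses, I would take \eqref{AWm}, \eqref{1W}, \eqref{VAm}, \eqref{V1} in turn. In \eqref{AWm}, $\Psi_{VW}\circ(m_V\ot\id)=(\id\ot m_V)\circ(\Psi_{VW}\ot\id)\circ(\id\ot\Psi_{VW})$; composing on the left with $\Psi_{VW}^{-1}$ and on the right with $(\id\ot\Psi_{VW}^{-1})\circ(\Psi_{VW}^{-1}\ot\id)$ isolates $(m_V\ot\id)\circ(\id\ot\Psi_{VW}^{-1})\circ(\Psi_{VW}^{-1}\ot\id)=\Psi_{VW}^{-1}\circ(\id\ot m_V)$, which is precisely the compatibility condition \eqref{VAm} written for $\Psi_{VW}^{-1}$ viewed as a braiding $W\ot V\to V\ot W$ with the algebra $V$ now sitting on the right. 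The unit conditions \eqref{1W} and \eqref{V1} are immediate: applying $\Psi_{VW}^{-1}$ to $\Psi_{VW}(1\ot w)=w\ot 1$ gives $\Psi_{VW}^{-1}(w\ot 1)=1\ot w$, which is \eqref{V1} for the inverse. The coalgebra clauses are entirely analogous, starting from \eqref{PHW} and \eqref{PVH} and composing with the inverse braidings and, where needed, using that $\Delta$ and $\eps$ are fixed while only the braiding is inverted. The final sentence---that $H$ is a braided (co)algebra with respect to $\Psi_{HH}^{-1}$---is then just the conjunction of the $V=H$, $W=H$ cases of the preceding clauses.

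The main obstacle is not conceptual but notational: keeping straight, in each of \eqref{YBE}, \eqref{VW}, \eqref{WV}, \eqref{AWm}, \eqref{VAm}, \eqref{PHW}, \eqref{PVH}, exactly which tensor slot each $\Psi^{-1}$ occupies after inversion, and verifying that the reorganized identity is \emph{verbatim} the left-handed or right-handed defining relation for the new structure rather than a cousin of it. I expect to carry this out once in detail for \eqref{VW}$\rightsquigarrow$\eqref{WV} and for \eqref{AWm}$\rightsquigarrow$\eqref{VAm}, and then to remark that all remaining clauses follow by the identical manipulation (or by applying the already-proven cases to $\Psi_{HH}^{-1}$ in place of $\Psi_{HH}$, using $(\Psi_{HH}^{-1})^{-1}=\Psi_{HH}$), as the statement itself suggests with the phrase ``by applying repeatedly the corresponding inverse morphism on both sides of the defining relations.''
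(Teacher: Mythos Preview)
Your proposal is correct and is precisely the approach the paper takes: the paper's entire proof consists of the single sentence ``It is proven by applying repeatedly the corresponding inverse morphism on both sides of the defining relations,'' which is exactly the inversion-and-cancellation strategy you describe and partially carry out. Your detailed treatment of \eqref{VW}$\rightsquigarrow$\eqref{WV} and \eqref{AWm}$\rightsquigarrow$\eqref{VAm} simply spells out what the paper leaves implicit.
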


\section{Braided products, coproducts and (co)actions} \label{sec3}

The purpose of this section is to discuss generalizations of opposite algebras and coalgebras 
in the braiding setting. It is also shown 
how to use the braiding, and possibly the inverse of the antipode, to turn right (co)actions into 
left (co)actions and vice versa. 

Given a braided bialgebra $H$, there exists a different braided bialgebra structure on $H$ 
with the multiplication $m$ and the coproduct $\Delta$ replaced by 
\[ 
m_1:= m\circ  \Psi_{HH} : H\ot H \lra H, \qquad  \Delta_{-1}:= \Psi_{HH}^{-1}\circ \Delta : H\lra H\ot H. 
\] 
This bialgebra will be denoted by $H^{(1,-\hsp1)}$, differing from the standard notation $H^{\mathrm{op,cop}}$ 
for reasons that will become clear below. 
If $H$ is a braided Hopf algebra, then it follows from \eqref{Sbraid} that 
$H^{(1,-\hsp1)}$ is also one with the same antipode $S$. 

The opposite algebra, say $H^{(-\hsp1,0)}$, with $m$ replaced by $m_{-1}:= m\circ  \Psi_{HH}^{-1} $ and 
the opposite coalgebra, say  $H^{(0,-\hsp1)}$, with $\Delta$ replaced by $\Delta_{-1}:= \Psi_{HH}^{-1}\circ \Delta $ 
will also yield braided bialgebras, but with respect to the inverse braiding. 
These constructions may yield an infinite family of braided bialgebras 
provided that $\Psi_{HH}$ is infinite cyclic.  
For later use, and since we did not find it in the literature, we will state the result in the following 
proposition. In the subsequent corollary, it is shown that, for a braided Hopf algebra $H$ with bijective antipode, 
$H$ is isomorphic to its $H^{\mathrm{op,cop}}$-version. 

\begin{proposition}  \label{lemmD}
Let $(H,m)$ be a braided (unital) algebra and let $k\in \Z$. 
Then 
\[ \label{mkDn1}
m_k:= m\circ  \Psi_{HH}^{k} : H\ot H \lra H, 
\] 
defines a product on $H$ turning it again into a braided (unital) algebra (with the same unit element). 
If \hs$V$ is a left or right $H$-braided vector space such that the braiding is compatible with 
the multiplication $m$ on $H$, then the braiding is also compatible with 
the multiplication~$m_k$. 

Let $(H,\Delta,\eps)$ be a braided coalgebra and let $n\in \Z$. 
Then 
\[ \label{mkDn2}
 \Delta_n: =\Psi_{HH}^{n}\circ\Delta:H\lra H\ot H, 
\]
defines a coproduct on $H$ turning it again into a braided coalgebra 
with the same counit. 
If \hs$V$ is a left or right $H$-braided vector space such that the braiding is compatible with 
the comultiplication of $H$, then the braiding is also  is compatible with 
the coproduct $\Delta_n$. 

Assume that $H$ is a braided bialgebra and 
let $H^{(k,n)}$ denote the linear space $H$ equipped with the product $m_k$, 
the unmodified unit element, the coproduct $\Delta_n$ and the unmodified counit.  
Then, for all $n\in\Z$,  $H^{(n,-n)}$ is a braided bialgebra with respect to the braiding $\Psi_{HH}$, 
and $H^{(n-1,-n)}$  is a braided bialgebra  
with respect to the braiding $\Psi_{HH}^{-1}$. 

If $H$ is a braided Hopf algebra with antipode $S$, then all $H^{(n,-n)}$ are braided 
Hopf algebras with the unmodified antipode $S$.  
If $S$ is invertible, then all $H^{(n-1,-n)}$ are braided Hopf algebras 
with antipode $S^{-1}$. 
\end{proposition}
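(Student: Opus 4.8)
The plan is to verify each assertion of Proposition~\ref{lemmD} in turn, exploiting the Yang--Baxter equation \eqref{YBE} together with the compatibility identities \eqref{AWm}, \eqref{VAm}, \eqref{PHW}, \eqref{PVH} established in Section~2, and reducing the general power $k$ (resp.\ $n$) to the case $k=1$ (resp.\ $n=-1$) by induction, since the claims for negative powers then follow from Lemma~\ref{Linv} applied to $\Psi_{HH}^{-1}$.

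First I would treat the algebra statement. Associativity of $m_k = m\circ\Psi_{HH}^k$ is proved by induction on $k\geq 0$: writing $m_k = m_{k-1}\circ$(something), or more efficiently by directly showing $m_k\circ(m_k\ot\id) = m_k\circ(\id\ot m_k)$ using \eqref{Pmm} (the compatibility of $\Psi_{HH}$ with $m$ written for both factors) and \eqref{YBE} to slide the braiding powers across the multiplications; unitality is immediate from \eqref{1W} and \eqref{V1}, which give $\Psi_{HH}(1\ot h)=h\ot 1$ and $\Psi_{HH}(h\ot 1)=1\ot h$, hence $\Psi_{HH}^k$ fixes $1\ot h$ and $h\ot 1$, so the unit is unchanged. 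That $\Psi_{HH}$ remains a braiding for $(H,m_k)$ is exactly \eqref{Pmm} again, which is symmetric in a way that survives post-composition of $m$ with $\Psi_{HH}^k$ on using \eqref{YBE}; and for a left or right $H$-braided vector space $V$, compatibility of $\Psi_{HV}$ (resp.\ $\Psi_{VH}$) with $m_k$ follows from \eqref{AWm}/\eqref{VAm} by the same sliding argument — one pushes the $k$ copies of $\Psi_{HH}$ through using the mixed Yang--Baxter relations \eqref{VW}/\eqref{WV}. The coalgebra statement is strictly dual: co-associativity of $\Delta_n = \Psi_{HH}^n\circ\Delta$ uses \eqref{PDD} and \eqref{YBE}, the counit is unchanged because $(\eps\ot\id)\Psi_{HH}=\id\ot\eps$ and $(\id\ot\eps)\Psi_{HH}=\eps\ot\id$ (from \eqref{PHW}, \eqref{PVH}), and compatibility with braidings on $V$ comes from \eqref{PHW}/\eqref{PVH} plus \eqref{VW}/\eqref{WV}.

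Next, for the bialgebra claims I must check the bialgebra compatibility \eqref{Dcm} for $H^{(n,-n)}$, i.e.\ $\Delta_{-n}\circ m_n = (m_n\ot m_n)\circ(\id\ot\Psi_{HH}\ot\id)\circ(\Delta_{-n}\ot\Delta_{-n})$. The left side is $\Psi_{HH}^{-n}\circ\Delta\circ m\circ\Psi_{HH}^n$; inserting the original \eqref{Dcm} for $\Delta\circ m$ and then commuting the outer $\Psi_{HH}^{\pm n}$ past the tensor-square expression via repeated use of \eqref{YBE}, \eqref{Pmm}, \eqref{PDD} should collapse it to the right side with the braiding powers $+n$ and $-n$ landing exactly on the two tensor factors of $m_n$ and $\Delta_{-n}$ — the cancellation of powers is what forces the pairing $(n,-n)$. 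For $H^{(n-1,-n)}$ with respect to $\Psi_{HH}^{-1}$, one notes $m_{n-1} = m\circ\Psi_{HH}^{n-1} = m_{-1}\circ(\Psi_{HH}^{-1})^{-(n)}$ adjusted so that it is the "$(n)$-th power" construction but built from $m_{-1}=m\circ\Psi_{HH}^{-1}$ and the braiding $\Psi_{HH}^{-1}$; by Lemma~\ref{Linv}, $(H,m_{-1},\Delta_{-1})$ with braiding $\Psi_{HH}^{-1}$ is again a braided bialgebra (this is the $H^{(-1,0)}$/$H^{(0,-1)}$ remark preceding the proposition, which itself needs a one-line check that \eqref{Dcm} for $m,\Delta$ is equivalent to its $\Psi^{-1}$-analogue for $m_{-1},\Delta_{-1}$ — apply $\Psi_{HH}^{-1}$ on the left and $\Psi_{HH}^{-1}$ on appropriate tensor legs), and then the already-proved $H^{(n,-n)}$ statement applied to this new bialgebra yields the $H^{(n-1,-n)}$ claim.

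Finally, the Hopf algebra statements: for $H^{(n,-n)}$ I claim the original $S$ is still the antipode, i.e.\ $m_n\circ(S\ot\id)\circ\Delta_{-n} = \eps(\cdot)1 = m_n\circ(\id\ot S)\circ\Delta_{-n}$. Using \eqref{Sbraid}, namely $\Psi_{HH}\circ(S\ot\id)=(\id\ot S)\circ\Psi_{HH}$ and $\Psi_{HH}\circ(\id\ot S)=(S\ot\id)\circ\Psi_{HH}$, the powers $\Psi_{HH}^{\pm n}$ commute with $S\ot\id$ and $\id\ot S$ up to swapping which leg $S$ sits on, so $m_n\circ(S\ot\id)\circ\Delta_{-n} = m\circ\Psi_{HH}^n\circ(S\ot\id)\circ\Psi_{HH}^{-n}\circ\Delta$; moving $\Psi_{HH}^n$ rightward past $S\ot\id$ turns it into $\pm$-alternating $S$-placements and ultimately yields $m\circ(\,\cdot\,)\circ\Delta$ with the $S$ on one factor, which is $\eps(\cdot)1$ by the original antipode axiom — here the key point is that each transposition $\Psi_{HH}$ toggles $S\ot\id \leftrightarrow \id\ot S$, and after $n$ steps the $\Psi_{HH}^{-n}$ cancels, leaving a single $S$ on the correct tensor factor. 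For $H^{(n-1,-n)}$ one repeats this with $\Psi_{HH}^{-1}$ as the braiding and $S^{-1}$ as antipode, using that $S^{-1}$ satisfies the $\Psi_{HH}^{-1}$-analogue of \eqref{Sbraid} (apply $S^{-1}\ot S^{-1}$-conjugation to the relations in \eqref{Sbraid}), together with the fact that $S^{-1}$ is the antipode of $H^{(-1,0)}$-type bialgebras. I expect the main obstacle to be bookkeeping: keeping straight, across the inductions, exactly which mixed Yang--Baxter relation (\eqref{VW} versus \eqref{WV}) and which direction of \eqref{Sbraid} is invoked at each crossing, and making sure the braiding powers land on the intended tensor legs so that the $(n,-n)$ and $(n-1,-n)$ pairings come out precisely — this is where the Sweedler-type annotated-equality notation advertised in the introduction earns its keep, and I would carry out the $k=1$, $n=-1$ base cases explicitly and then state that the general case follows by the evident induction and by Lemma~\ref{Linv}.
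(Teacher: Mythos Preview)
Your overall plan matches the paper's: handle the algebra and coalgebra statements by establishing the case $k=1$ (resp.\ $n=1$) directly from \eqref{AWm}, \eqref{VAm}, \eqref{YBE} (resp.\ \eqref{PHW}, \eqref{PVH}, \eqref{YBE}), then induct using $m_{k+1}=m_k\circ\Psi_{HH}$ and invoke Lemma~\ref{Linv} for negative powers; and handle the Hopf part via \eqref{Sbraid} and the parity observation that $\Psi_{HH}^n\circ(S\ot\id)\circ\Psi_{HH}^{-n}$ equals $(S\ot\id)$ or $(\id\ot S)$. That part is fine.

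However, your reduction for the $H^{(n-1,-n)}$ family contains a genuine error. You assert that $(H,m_{-1},\Delta_{-1})$ with braiding $\Psi_{HH}^{-1}$ is a braided bialgebra, and that this follows from a ``one-line check'' that \eqref{Dcm} for $(m,\Delta,\Psi_{HH})$ is equivalent to \eqref{Dcm} for $(m_{-1},\Delta_{-1},\Psi_{HH}^{-1})$. This is false: $H^{(-1,-1)}$ is not in either of the families $H^{(n,-n)}$ or $H^{(n-1,-n)}$, and the braid-group computation you would need (comparing $\sigma_2\sigma_1\sigma_3\sigma_2\sigma_1\sigma_3\sigma_2$ with $\sigma_1\sigma_3\sigma_2\sigma_1\sigma_3$ on four strands) does not collapse. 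The remark before the proposition says $H^{(-1,0)}$ and $H^{(0,-1)}$ are bialgebras for $\Psi_{HH}^{-1}$, not $H^{(-1,-1)}$.

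The paper's fix is a zig-zag induction: first verify directly (using \eqref{PDD} and \eqref{Pmm} for $\Psi_{HH}^{-1}$) that $H^{(-1,0)}$ and $H^{(0,-1)}$ satisfy \eqref{Dcm} with braiding $\Psi_{HH}^{-1}$; then alternate --- from $H^{(n-1,-n)}$ (a bialgebra for $\Psi_{HH}^{-1}$) pass to $H^{(n,-n)}$ by replacing $m_{n-1}$ with $m_{n-1}\circ(\Psi_{HH}^{-1})^{-1}=m_n$, which is the already-established ``change multiplication by the inverse braiding'' step and flips the ambient braiding back to $\Psi_{HH}$; from $H^{(n,-n)}$ pass to $H^{(n,-(n+1))}$ by modifying the coproduct, flipping the braiding to $\Psi_{HH}^{-1}$ again; and so on in both directions. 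Your idea of applying the $H^{(n,-n)}$ result to a new base bialgebra would also work, but the correct base is $H^{(-1,0)}$ (or $H^{(0,-1)}$), not $H^{(-1,-1)}$: starting from $H^{(-1,0)}$ with braiding $\Psi_{HH}^{-1}$ and applying the $(k,-k)$ construction for that braiding yields exactly $H^{(-1-k,\,k)}$, which runs over the family $H^{(n-1,-n)}$.
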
 
\begin{proof} 

We begin by showing that $m_1$ defines an associative product: 
\begin{eqnarray*} 
m_1\circ (\id \ot m_1) 
\!\!&\oeq{\eqref{mkDn1}}&\!\!  m \circ \Psi_{HH}\circ (\id \ot m) \circ (\id \ot \Psi_{HH}) \\
\!\!&\oeq{\eqref{VAm}}&\!\! m \circ (m \ot \id) \circ (\id \ot  \Psi_{HH}) \circ ( \Psi_{HH} \ot \id)\circ (\id \ot \Psi_{HH}) \\
\!\!&\oeq{\eqref{YBE}}&\!\! m \circ (\id \ot m)\circ ( \Psi_{HH} \ot \id)\circ (\id \ot \Psi_{HH})  \circ ( \Psi_{HH}\ot \id ) \\
\!\!&\oeq{\eqref{AWm}}&\!\! m \circ \Psi_{HH}\circ (m \ot \id) \circ ( \Psi_{HH}\ot \id) = m_1\circ ( m_1\ot \id). 
\end{eqnarray*} 
If $1\in H$, then clearly $m_1(1\ot h) = h = m_1(h\ot 1)$ for all $h\in H$ by \eqref{1W} and \eqref{V1}. 

To show the compatibility of $\Psi_{HH}$ with $m_1$, we compute that 
\begin{eqnarray*}
\Psi_{HH}\circ (m_1 \ot \id)   
\!\!&\oeq{\eqref{AWm},\eqref{mkDn1}}&\!\!     
(\id\ot m) \circ (\Psi_{HH} \ot \id) \circ (\id\ot \Psi_{HH}) \circ (\Psi_{HH} \ot \id) \\
\!\!& \oeq{\eqref{YBE}}&\!\!  
(\id\ot m) \circ (\id\ot \Psi_{HH}) \circ (\Psi_{HH} \ot \id) \circ (\id\ot \Psi_{HH}) \\
\!\!&\oeq{\eqref{mkDn1}}&\!\! (\id\ot m_1) \circ (\Psi_{HH} \ot \id) \circ (\id\ot \Psi_{HH}), 
\end{eqnarray*} 
which proves \eqref{AWm}. The proof of \eqref{VAm} is completely analogous. 
Moreover, if $1\in H$, then \eqref{1W} and \eqref{V1} are trivially satisfied. 
Therefore $H$ is again a braided algebra with respect to the 
multiplication $m_1$ and the braiding $\Psi_{HH}$. 

Since $m_{k+1} = m_k\circ \Psi_{HH}$, we conclude by induction that the same 
holds for all $k\in \N$. From Lemma \ref{Linv} and the previous computations, 
it follows that $m_{-1}$ turns $H$ also into a braided algebra, 
and again by induction, we obtain the result for all $m_{-k}$, \,$k\in \N$. 

Next we prove that $(H, \Delta_1,\eps)$ yields a braided coalgebra. 
The coassociativity follows from    
\begin{eqnarray*}
((\Psi_{HH}\hsp\circ\hsp \Delta)\hsp\ot\hsp \id)\circ (\Psi_{HH}\hsp\circ\hsp \Delta) 
\!\!&\oeq{\eqref{PVH}}&\!\! (\Psi_{HH} \hsp\ot\hsp \id)\circ (\id \hsp\ot\hsp \Psi_{HH})\circ (\Psi_{HH} \hsp\ot\hsp \id)\circ 
(\id \hsp\ot\hsp \Delta)\circ \Delta \\
\!\! &\oeq{\eqref{YBE}}&\!\!  (\id \hsp\ot\hsp \Psi_{HH})\circ (\Psi_{HH} \hsp\ot\hsp \id)\circ (\id \hsp\ot\hsp \Psi_{HH})\circ 
( \Delta \hsp\ot\hsp \id )\circ \Delta \\
\!\! &\oeq{\eqref{PHW}}&\!\!   (\id\hsp\ot\hsp(\Psi_{HH}\circ \Delta))\circ (\Psi_{HH}\circ \Delta). 
\end{eqnarray*} 
Furthermore, $(\id \ot \eps) \circ (\Psi_{HH}\hsp\circ\hsp \Delta) 
= \id = (\eps\ot \id)\circ (\Psi_{HH}\hsp\circ\hsp \Delta)$ by the 
second relations in \eqref{PHW} and \eqref{PVH}. 
To verify the compatibility with the braiding, we compute that 
\begin{eqnarray*}
(\id\hsp \ot \hsp \Delta_1) \hsp \circ \hsp \Psi_{HH} 
\!\! &\oeq{\eqref{PHW},\eqref{mkDn2}}&\!\!     
(\id \hsp\ot\hsp\Psi_{HH}) \hsp  \circ \hsp (\Psi_{HH}\hsp \ot \hsp\id) \hsp \circ \hsp (\id \hsp \ot  \hsp \Psi_{HH})
\hsp \circ \hsp( \Delta\hsp \ot \hsp\id) \\
\!\! &\oeq{\eqref{YBE}}&\!\! 
(\Psi_{HH}\hsp \ot \hsp\id) \hsp \circ \hsp (\id \hsp \ot  \hsp \Psi_{HH})
\hsp \circ \hsp (\Psi_{HH}\hsp \ot \hsp\id) \hsp \circ \hsp( \Delta\hsp \ot \hsp\id) \\ 
\!\! &\oeq{\eqref{mkDn2}}&\!\!  (\Psi_{HH}\hsp \ot \hsp\id) \hsp \circ \hsp (\id \hsp \ot  \hsp \Psi_{HH}) 
\hsp \circ \hsp( \Delta_1\hsp \ot \hsp\id).  
\end{eqnarray*} 
This shows the first relation of \eqref{PHW}. The first relation of \eqref{PVH} is proven analogously, and 
second relations in  \eqref{PHW} and \eqref{PVH} are trivially satisfied. 
Therefore $(H, \Delta_1,\eps)$ is a braided coalgebra. 

By Lemma \ref{Linv}, the same arguments show that 
$(H, \Delta_{-1},\eps)$ yields also a braided coalgebra. 
Similar to the above, 
since $\Delta_{k\pm 1}=\Delta_{k}\circ \Psi_{HH}^{\pm 1}$, 
we can now proceed by induction to conclude 
that $(H, \Delta_k,\eps)$ is a braided coalgebra for all $k\in \Z$. 

Let $H$ be a braided bialgebra. We first show that 
$H^{(-\hsp1,0)}$ is a braided bialgebra with respect to $\Psi_{HH}^{-1}$. 
From the first part of the proof and Lemma \ref{Linv}, we know 
that $\Psi_{HH}^{-1}$ is compatible with the multiplication $m_{-1}$ and 
the comultiplication $\Delta$, so it remains to verify \eqref{Dcm}. 
Again by Lemma \ref{Linv}, we conclude that $\Psi_{HH}^{-1}$ satisfies \eqref{PDD}.  
Therefore, 
\begin{align*} 
\Delta \circ m_{-1}   
&\oeq{\eqref{Dcm}}   
(m\ot m) \circ (\id \ot \Psi_{HH} \ot \id) \circ (\Delta\ot \Delta) \circ\Psi_{HH}^{-1} \\
 &\oeq{\eqref{PDD}}   
(m\hsp \ot\hsp  m)\hsp  \circ\hsp  ( \Psi_{HH}^{-1}\hsp\ot\hsp\Psi_{HH}^{-1})\hsp  \circ\hsp  
 (\id\hsp  \ot\hsp \Psi_{HH}^{-1} \hsp \ot\hsp  \id)\hsp \circ\hsp  (\Delta \hsp\ot\hsp\Delta) \\
 &\oeq{\eqref{mkDn1}}  (m_{-1}\ot m_{-1})\circ (\id\ot \Psi_{HH}^{-1} \ot \id)\circ (\Delta\ot\Delta). 
\end{align*} 
This finishes the proof that $H^{(-\hsp1,0)}$ is a braided bialgebra with respect to $\Psi_{HH}^{-1}$. 

To conclude the same for $H^{(0,-1)}$\hsp, note that  \eqref{Pmm} remains valid if 
we replace  $\Psi_{HH}$ by $\Psi_{HH}^{-1}$. 
Thus 
\begin{align*} 
\Delta_{-1} \circ m   
&\oeq{\eqref{Dcm}}   
\Psi_{HH}^{-1}\circ (m\ot m) \circ (\id \ot \Psi_{HH} \ot \id) \circ (\Delta \ot\Delta) \\
&\oeq{\eqref{Pmm}}   
 (m\ot m) \circ (\id \ot \Psi_{HH}^{-1} \ot \id) \circ (\Psi_{HH}^{-1} \ot \Psi_{HH}^{-1}) \circ  (\Delta \ot\Delta) \\
  &\oeq{\eqref{mkDn2}}  (m\ot m)\circ (\id\ot \Psi_{HH}^{-1} \ot \id)\circ (\Delta_{-1}\ot\Delta_{-1}),  
\end{align*} 
hence \eqref{Dcm} is satisfied. Together with the previous results, 
it follows that $H^{(0,-\hsp1)}$ is a braided bialgebra 
with respect to~$\Psi_{HH}^{-1}$. 

Now we proceed by induction. Let $n\in\N$ and assume 
that $H^{(n-1,-n)}$ is a braided bialgebra with respect to $\Psi_{HH}^{-1}$. 
From what has already been shown and since $\Psi_{HH}= (\Psi_{HH}^{-1})^{-1}$, 
we conclude that $H^{(n,-n)}$ with $m_n=m_{n-1}\circ \Psi_{HH}$ and 
$\Delta_{-n}$ is a braided bialgebra with respect to $\Psi_{HH}$. 
Likewise, if $H^{(-n,n-1)}$ is a braided bialgebra with respect to $\Psi_{HH}^{-1}$, then 
$H^{(-n,n)}$ with $m_{-n}$ and 
$\Delta_{n}=\Delta_{n-1} \circ \Psi_{HH}$ is a braided bialgebra with respect to $\Psi_{HH}$. 
Continuing in this way, if $H^{(-n,n)}$ is a braided bialgebra with respect to $\Psi_{HH}$, 
we can replace in above calculations $H$ by $H^{(-n,n)}$ and see that 
$H^{(-(n+1),n)}$ with $m_{-(n+1)}=m_{-n}\circ \Psi_{HH}^{-1}$ and $\Delta_{-n}$ 
is a  braided bialgebra with respect to $\Psi_{HH}^{-1}$. 
Finally, if $H^{(n,-n)}$ is a braided bialgebra with respect to $\Psi_{HH}$,
it follows that $H^{(n,-(n+1))}$ with $m_n$ and 
$\Delta_{-(n+1)}=\Delta_{-n} \circ \Psi_{HH}^{-1}$ 
is braided bialgebras with respect to $\Psi_{HH}^{-1}$. 
Thus the usual induction argument yields the result. 

If $H$ is braided Hopf algebra, then, by \eqref{Sbraid}, 
$$
m_n\circ (S\ot \id) \circ \Delta_{-n} 
= m\circ \Psi_{HH}^n\circ (S\ot \id) \circ  \Psi_{HH}^{-n}\circ\Delta
= \left\{ \begin{array}{l} m\circ (S\ot \id) \circ \Delta,\ \ n\in 2\Z,  \\
m\circ (\id\ot S) \circ \Delta,\ \  n\in 2\Z+1. 
\end{array} \right.
$$
Thus $m_n\circ (S\ot \id) \circ \Delta_{-n}=1\hs\eps$,  
and similarly, $m_n\circ (\id\ot S) \circ \Delta_{-n}=1\hs\eps$.  
Therefore $H^{(n,-n)}$ is a Hopf algebra with antipode $S$. 
If $S^{-1}$ exists, we obtain from \eqref{Sbraid} for $n\in 2\Z$ that 
\begin{align*}
&m_{n-1}\circ (\id\ot S^{-1}) \circ \Delta_{-n} 
= m\circ \Psi_{HH}^{n-1}\circ (\id\ot S^{-1}) \circ  \Psi_{HH}^{-n}\circ\Delta \\
&= m\circ (S^{-1}\ot \id) \circ  \Psi_{HH}^{-1}\circ\Delta 
= m\circ (\id \ot S) \circ (S^{-1}\ot S^{-1}) \circ \Psi_{HH}^{-1}\circ\Delta \\
& = m\circ (\id \ot S) \circ \Delta \circ S^{-1}  = 1\, \eps\circ S^{-1}  = 1\,\eps. 
\end{align*}  
The remaining cases, which prove that $S^{-1}$ yields an antipode for $H^{(n-1,-n)}$, 
are shown analogously. 

Finally, let $V$ be a left $H$-braided vector space. If $\Psi_{HV}$ is compatible with the 
multiplication on $H$, then 
\begin{eqnarray*} 
\Psi_{HV}\circ (m_k \ot \id) \!\!&\oeq{\eqref{AWm},\eqref{mkDn1}}&\!\!  
 (\id \ot m) \circ (\Psi_{HV} \ot \id) \circ (\id \ot \Psi_{HV}) \circ (\Psi_{HH}^k \ot \id) \\
\!\!&\oeq{\eqref{VW}}&\!\!  (\id \ot m)\circ ( \id\ot\Psi_{HH}^k)  \circ (\Psi_{HV} \ot \id) \circ (\id \ot \Psi_{HV}) \\ 
\!\!&\oeq{\eqref{mkDn1}}&\!\! (\id \ot m_k) \circ (\Psi_{HV} \ot \id) \circ (\id \ot \Psi_{HV}), 
\end{eqnarray*}
which shows the compatibility of $\Psi_{HV}$ with the multiplication $m_k$. 
Likewise, if $\Psi_{HV}$ is compatible with the comultiplication on $H$, then 
\begin{eqnarray*} 
(\id \ot \Delta_n)\circ   \Psi_{HV} \!\!&\oeq{\eqref{PHW},\eqref{mkDn2}} &\!\!  
(\id \ot \Psi_{HH}^{n}) \circ(\Psi_{HV} \ot \id)  \circ (\id \ot \Psi_{HV}) \circ  (\Delta \ot \id) \\
\!\!&\oeq{\eqref{VW}}&\!\! (\Psi_{HV} \ot \id)  \circ (\id \ot \Psi_{HV}) \circ (\Psi_{HH}^{n}\ot\id)\circ(\Delta \ot \id) \\
\!\!&\oeq{\eqref{mkDn2}}&\!\!  (\Psi_{HV} \ot \id)  \circ (\id \ot \Psi_{HV}) \circ  (\Delta_n \ot \id)
\end{eqnarray*} 
proves the compatibility of $\Psi_{HV}$ with the comultiplication $\Delta_n$.  
The relations regarding the unmodified unit or counit remain trivially true.  
The proof for a right $H$-braided vector space is completely analogous. 
\end{proof} 

Since, by \eqref{Sbraid}, $(S\ot S) \circ \Psi_{HH} = \Psi_{HH}\circ (S\ot S)$ and 
$$
S^k \circ m = m \circ  \Psi_{HH}^k \circ (S^k\ot S^k) = m_k \circ (S^k\ot S^k), \ \ 
(S^k\ot S^k) \circ \Delta =  \Psi_{HH}^{-k} \circ \Delta \circ S^k = \Delta_{-k} \circ S^k, 
$$ 
we obtain immediately the following corollary. 

\begin{corollary}  \label{corS} 
Let $H$ be a braided Hopf algebra and $n\in \Z$.  Then the antipode $S$ defines 
braided  Hopf algebra homomorphisms $S^k : H^{(n,-n)} \to H^{(n+k,-(n+k))}$
and braided bialgebra homomorphisms  
$S^k : H^{(n-1,-n)} \to H^{(n+k-1,-(n+k))}$, 
where $k\in \Z$ \hs if \hs $S$ is invertible and $k\in \N$ otherwise. 
For invertible $S$, all these homomorphisms are isomorphisms of braided Hopf algebras.  
\end{corollary}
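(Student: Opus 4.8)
The plan is to verify directly that $S^k$ intertwines the modified products, coproducts, units, counits and antipodes of the source and target bialgebras, using only the identities in~\eqref{Sbraid} and the definitions~\eqref{mkDn1}, \eqref{mkDn2}. Everything needed is already assembled in the computation displayed immediately before the corollary statement; the task is essentially to package it correctly. First I would record the two key consequences of~\eqref{Sbraid}: that $\Psi_{HH}$ commutes with $S\ot S$ (hence with $S^k\ot S^k$ for every $k\ge 0$, and with $S^k\ot S^k$ for every $k\in\Z$ when $S$ is invertible), and that $S\circ m = m\circ\Psi_{HH}\circ(S\ot S)$ together with $\Delta\circ S = \Psi_{HH}\circ(S\ot S)\circ\Delta$. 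Iterating these and pushing the powers of $\Psi_{HH}$ through $S^k\ot S^k$ gives $S^k\circ m = m\circ\Psi_{HH}^k\circ(S^k\ot S^k) = m_k\circ(S^k\ot S^k)$ and $(S^k\ot S^k)\circ\Delta = \Psi_{HH}^{-k}\circ\Delta\circ S^k = \Delta_{-k}\circ S^k$, exactly as displayed.

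Next I would translate these into the statement about the graded family. Recall that $H^{(n,-n)}$ has product $m_n = m\circ\Psi_{HH}^n$ and coproduct $\Delta_{-n} = \Psi_{HH}^{-n}\circ\Delta$ (with the unmodified unit, counit and, by Proposition~\ref{lemmD}, antipode $S$). To see that $S^k : H^{(n,-n)}\to H^{(n+k,-(n+k))}$ is an algebra map, one computes
\[
S^k\circ m_n = S^k\circ m\circ\Psi_{HH}^n = m\circ\Psi_{HH}^k\circ(S^k\ot S^k)\circ\Psi_{HH}^n
= m\circ\Psi_{HH}^{n+k}\circ(S^k\ot S^k) = m_{n+k}\circ(S^k\ot S^k),
\]
where the last step uses that $\Psi_{HH}$ commutes with $S^k\ot S^k$; the coalgebra-map identity $\pmb{\Delta}^{(n+k)}\circ S^k = (S^k\ot S^k)\circ\pmb{\Delta}^{(n)}$ is the same computation read through $(S^k\ot S^k)\circ\Delta = \Delta_{-k}\circ S^k$. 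The unit is preserved since $S(1)=1$ (a consequence of $\eps\circ S=\eps$ and the antipode axiom, or directly), and $\eps\circ S^k = \eps$ by the last relation in~\eqref{Sbraid}. Finally $S^k\circ S = S\circ S^k$ trivially, so $S^k$ intertwines the (unmodified) antipodes, making it a braided Hopf algebra homomorphism. The case $S^k : H^{(n-1,-n)}\to H^{(n+k-1,-(n+k))}$ is identical, except that these objects are only bialgebras unless $S$ is invertible, in which case Proposition~\ref{lemmD} supplies antipode $S^{-1}$ on both and $S^k$ again intertwines them.

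For the last sentence, when $S$ is invertible the map $S^{-k} : H^{(n+k,-(n+k))}\to H^{(n,-n)}$ is constructed by the same formulas and is a two-sided inverse of $S^k$ since $S^k\circ S^{-k}=\id=S^{-k}\circ S^k$; hence each $S^k$ is an isomorphism. There is no real obstacle here — the only thing to be careful about is bookkeeping of the exponents (checking that $\Psi_{HH}^k\circ\Psi_{HH}^n=\Psi_{HH}^{n+k}$ lands the product in $m_{n+k}$ and not some off-by-one shift), and the range of $k$: for non-invertible $S$ one must keep $k\in\N$ so that $S^k\ot S^k$ still commutes with $\Psi_{HH}$ via the \emph{forward} relations in~\eqref{Sbraid}, whereas invertibility is exactly what allows negative $k$ and upgrades the homomorphisms to isomorphisms.
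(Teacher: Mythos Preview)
Your proposal is correct and follows exactly the paper's approach: the paper's entire proof consists of the two displayed identities $S^k\circ m = m_k\circ(S^k\ot S^k)$ and $(S^k\ot S^k)\circ\Delta = \Delta_{-k}\circ S^k$ (derived from \eqref{Sbraid} and the commutation of $\Psi_{HH}$ with $S\ot S$) followed by ``we obtain immediately the following corollary,'' and you have simply unpacked that ``immediately'' by composing with the extra $\Psi_{HH}^{\pm n}$ and checking the unit, counit, antipode and inverse.
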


In the unbraided case, a left  action of an algebra yields a right  action 
of the opposite algebra with flipped multiplication, and a 
left coaction of a coalgebra defines a right coaction 
of the opposite coalgebra with the flipped coproduct. 
Evidently, the same holds if left and right are interchanged. 
However, the usual flip is in general 
not compatible with the braiding.  
A proper version in the braided setting is given in the next proposition. 

\begin{proposition}  \label{biprop} 
Let $(H,m)$ be a braided algebra and $V$ a braided left $H$-module. 
Then 
\[ \label{nuRc} 
\nu_R^\circ  : V\ot H \lra H, \quad \nu_R^\circ  :=  \nu_L  \circ  \Psi_{HV}^{-1} 
\] 
turns $V$ into a braided right $H$-module with respect to the 
multiplication $m_{-1}:= m\circ  \Psi_{HH}^{-1}$ and the braiding $\Psi_{HH}^{-1}$ on $H$. 
If $H$ is a braided bialgebra and $V$ is a braided left $H$-module algebra, 
then $\nu_R^\circ$ transforms $V$ into a right $H^{(-1,0)}$-module algebra. 

Analogously, if $V$ is a braided right $H$-module, then 
$$
\nu_L^\circ  : H\ot V \lra H, \quad \nu_L^\circ  :=  \nu_R  \circ  \Psi_{VH}^{-1} 
$$ 
turns $V$ into a braided left $H$-module with respect to the multiplication 
$m_{-1}$ and the braiding $\Psi_{HH}^{-1}$ on $H$.  
Furthermore, a braided right $H$-module algebra becomes a 
left $H^{(-1,0)}$-module algebra. 

Given a coalgebra $H$ with coproduct $\Delta$ and a braided right $H$-comodule $V$, 
the left coaction 
$$
\rho_L^\circ : V\lra H\ot V, \quad \rho_L^\circ := \Psi_{HV}^{-1}  \circ \rho_R 
$$
turns $V$ into a braided left $H$-comodule with respect to the 
coproduct $\Delta_{-1}:= \Delta\circ  \Psi_{HH}^{-1}$ and the braiding $\Psi_{HH}^{-1}$ on $H$. 
If $H$ is a braided bialgebra and $V$ is a braided right $H$-comodule algebra, 
then $\rho_L^\circ$ transforms $V$ into a left $H^{(0,-1)}$-comodule algebra. 

For a braided left $H$-comodule $V$, the right coaction 
$$
\rho_R^\circ : V\lra V\ot H, \quad \rho_R^\circ := \Psi_{VH}^{-1}  \circ \rho_L 
$$
turns $V$ into a braided left $H$-comodule with respect to the coproduct $\Delta_{-1}$ 
and the braiding $\Psi_{HH}^{-1}$ on $H$, 
and a braided left $H$-comodule algebra becomes a 
right $H^{(0,-1)}$-comodule algebra.  
\end{proposition}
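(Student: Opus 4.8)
The four assertions follow a single scheme, so it suffices to treat the first one carefully. Write $\nu_R^\circ=\nu_L\circ\Psi_{HV}^{-1}$. All the structural prerequisites come for free: by Lemma~\ref{Linv}, $\Psi_{HH}^{-1}$ is again a braiding on $H$ and $V$ becomes a right $H$-braided vector space with respect to $\Psi_{HV}^{-1}$ and to $\Psi_{HH}^{-1}$ on $H$, and by Lemma~\ref{Linv} together with Proposition~\ref{lemmD} the braiding $\Psi_{HV}^{-1}$ is compatible with the modified multiplication $m_{-1}=m\circ\Psi_{HH}^{-1}$; no antipode enters anywhere. Hence it remains to verify the braided right $H$-module axioms \eqref{amu}, \eqref{bmu} for the triple $(\nu_R^\circ,\Psi_{HV}^{-1},\Psi_{HH}^{-1})$, and, in the module-algebra case, the right-handed analogue of \eqref{num} together with the normalisation \eqref{nu1}.

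The unit condition $\nu_R^\circ(v\ot1)=v$ holds because $\Psi_{HV}(1\ot v)=v\ot1$ by \eqref{1W}, hence $\Psi_{HV}^{-1}(v\ot1)=1\ot v$, and \eqref{anu} finishes it. For associativity with respect to $m_{-1}$, expand $\nu_R^\circ\circ(\nu_R^\circ\ot\id)=\nu_L\circ\Psi_{HV}^{-1}\circ(\nu_L\ot\id)\circ(\Psi_{HV}^{-1}\ot\id)$, commute $\Psi_{HV}^{-1}$ past $\nu_L\ot\id$ by \eqref{nulinv}, replace $\nu_L\circ(\id\ot\nu_L)$ by $\nu_L\circ(m\ot\id)$ via \eqref{anu}, and then move the leftover $\Psi_{HH}^{-1}$ and the two copies of $\Psi_{HV}^{-1}$ into place using the right $H$-braided vector space relation \eqref{WV} for $(\Psi_{HV}^{-1},\Psi_{HH}^{-1})$ (the inverse of \eqref{VW}, valid by Lemma~\ref{Linv}) and the inverse compatibility of $\Psi_{HV}$ with $m$ (also Lemma~\ref{Linv}); since $m\circ\Psi_{HH}^{-1}=m_{-1}$ this is exactly $\nu_R^\circ\circ(\id\ot m_{-1})$. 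The braiding compatibility \eqref{bmu} for $(\nu_R^\circ,\Psi_{HV}^{-1},\Psi_{HH}^{-1})$ reduces, after applying \eqref{nulinv} on both sides, to the same relation \eqref{WV} for $(\Psi_{HV}^{-1},\Psi_{HH}^{-1})$.

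For the module-algebra claim recall that $H^{(-1,0)}$ carries the product $m_{-1}$ but the \emph{unmodified} coproduct $\Delta$, so the compatibilities of $\Psi_{HV}^{-1}$ with $m_B$, with $m_{-1}$ and with $\Delta$ are all instances of Lemma~\ref{Linv} and Proposition~\ref{lemmD}, and $\nu_R^\circ(1\ot f)=\nu_L(f\ot1)=\eps(f)1$ by \eqref{V1} and \eqref{nu1}. For the main identity --- the right-handed form of \eqref{num}, with action $\nu_R^\circ$, braiding $\Psi_{HV}^{-1}$ and coproduct $\Delta$ --- start from $\nu_R^\circ\circ(m_B\ot\id)=\nu_L\circ\Psi_{HV}^{-1}\circ(m_B\ot\id)$, pull $\Psi_{HV}^{-1}$ through $m_B$ by the inverse compatibility of $\Psi_{HV}$ with $m_B$, invoke \eqref{num} for $\nu_L$, and then rearrange the surviving braidings with \eqref{nulinv}, the inverse compatibility of $\Psi_{HV}$ with $\Delta$, and the Yang--Baxter equation \eqref{YBE}, steering the $\Psi_{HH}^{-1}$ hidden inside $m_{-1}$ onto its correct tensor leg.

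The remaining three parts are proven identically. The second is the left--right mirror of the first, with \eqref{amu}, \eqref{nurinv} replacing \eqref{anu}, \eqref{nulinv}. The third and fourth are the coalgebra/comodule analogues: one takes $\rho_L^\circ=\Psi_{HV}^{-1}\circ\rho_R$, respectively $\rho_R^\circ=\Psi_{VH}^{-1}\circ\rho_L$, and uses \eqref{rohD}, \eqref{brR} in place of \eqref{anu}, \eqref{bnu}, their inverse forms \eqref{invPHV} (and its unlabeled companion) in place of \eqref{nulinv}, \eqref{PHW}, \eqref{PVH} in place of \eqref{AWm}, \eqref{VAm}, the counit part of \eqref{PHW} for the counit identity, and \eqref{rhoRm} where \eqref{num} was used above for the comodule-algebra compatibility. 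In every case the genuinely delicate point is purely combinatorial: one must keep precise track of which of the three or four tensor slots each braiding and each inverse braiding occupies, and apply \eqref{YBE} at the one position that transports the extra $\Psi_{HH}^{-1}$ of $m_{-1}$ (respectively the extra $\Psi_{HH}^{-1}$ of $\Delta_{-1}$) to where it is needed; once the slots are laid out correctly all the identities follow mechanically.
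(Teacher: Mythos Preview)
Your proposal is correct and follows essentially the same route as the paper: unwind $\nu_R^\circ$ via its definition, commute $\Psi_{HV}^{-1}$ past $\nu_L$ using \eqref{nulinv}, invoke the left-module axioms, and rearrange braidings using the inverse braided-vector-space relation (Lemma~\ref{Linv}) and the inverse compatibility of $\Psi_{HV}$ with $m$.

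One small inaccuracy is worth flagging. In the module-algebra step the paper needs only the inverse of \eqref{VAm} to pull $\Psi_{HV}^{-1}$ through $m_V$, then \eqref{num} for $\nu_L$, and then two applications of the inverted form of \eqref{PHW} to move $\Delta$ to the right past the two copies of $\Psi_{HV}^{-1}$; the intermediate $\Psi_{HV}$ from \eqref{num} cancels against one of the $\Psi_{HV}^{-1}$'s directly. Neither \eqref{nulinv} nor the Yang--Baxter equation \eqref{YBE} is required here, and since $H^{(-1,0)}$ carries the \emph{unmodified} coproduct $\Delta$, there is no ``$\Psi_{HH}^{-1}$ hidden inside $m_{-1}$'' to steer anywhere in this particular identity. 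The bookkeeping is actually simpler than your sketch suggests; you have overestimated the number of moves needed, but not in a way that damages correctness.
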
 
\begin{proof}
The compatibility of the inverse braidings with algebraic structures can be deduced from Lemma \ref{Linv}. 
In particular, the braiding $\Psi_{HV}^{-1}$ turns a $V$ into right $H$-braided vector space and, 
by Proposition \ref{lemmD}, 
is compatible with the multiplication $m_{-1}$ . Since 
\begin{eqnarray*} 
\nu_R^\circ \circ (\nu_R^\circ \ot \id ) \!\!&\oeq{\eqref{nuRc}} &\!\!
\nu_L  \circ  \Psi_{HV}^{-1}  \circ (\nu_L \ot \id )\circ (\Psi_{HV}^{-1} \ot \id)  \\
\!\!&\oeq{\eqref{nulinv}} &\!\!
\nu_L  \circ (\id \ot \nu_L )\circ (\Psi_{HH}^{-1} \ot \id)\circ  (\id\ot \Psi_{HV}^{-1}) \circ (\Psi_{HV}^{-1} \ot \id) \\
\!\!&\oeq{\eqref{VW},\eqref{anu}}&\!\!
 \nu_L  \circ (m\ot \id) \circ  (\id\ot \Psi_{HV}^{-1}) \circ (\Psi_{HV}^{-1} \ot \id) \circ ( \id \ot\Psi_{HH}^{-1})\\
 \!\!&\oeq{\eqref{AWm}}&\!\!
  \nu_L\circ \Psi_{HV}^{-1} \circ (\id \ot m)\circ  ( \id \ot\Psi_{HH}^{-1}) 
  \oeq{\eqref{nuRc}} \nu_R^\circ \circ (\id \ot m_{-1}), 
\end{eqnarray*} 
and $\nu_R^\circ (v\ot 1) = v$ by \eqref{1W} and \eqref{anu}, 
it follows that $\nu_R^\circ $ defines a right $H$-action with respect to the multiplication $m_{-1}$. 
Moreover, 
\begin{eqnarray*} 
\Psi_{HV}^{-1}\circ (\nu_R^\circ \ot \id ) 
\!\!&\oeq{\eqref{nuRc}}&\!\! \Psi_{HV}^{-1}\circ (\nu_L \ot \id )\circ (\Psi_{HV}^{-1} \ot \id) \\
\!\!&\oeq{\eqref{nulinv}}&\!\!
(\id \ot \nu_L )\circ (\Psi_{HH}^{-1} \ot \id)\circ  (\id\ot \Psi_{HV}^{-1})\circ (\Psi_{HV}^{-1} \ot \id) \\
\!\!&\oeq{\eqref{VW},\eqref{nuRc}}&\!\!
(\id \ot \nu_R^\circ )\circ (\Psi_{HV}^{-1} \ot \id) \circ ( \id \ot\Psi_{HH}^{-1})
\end{eqnarray*} 
proves \eqref{bmu}. Therefore $\nu_R^\circ$ equips $V$ 
with the structure of a braided right $H$-module with respect to the multiplication $m_{-1}$ 
and the braidings $\Psi_{HH}^{-1}$ and $\Psi_{HV}^{-1}$. 

If $H$ is a braided bialgebra and $V$ is a braided left $H$-module algebra, then 
\begin{align*}
&\nu_R^\circ  \circ (m_V\ot \id) \oeq{\eqref{VAm},\eqref{nuRc}}
\nu_L  \circ (\id \ot m_V) \circ (\Psi_{HV}^{-1} \ot \id)\circ (\id\ot \Psi_{HV}^{-1})\\ 
& \oeq{\eqref{num}}
 m_V\circ (\nu_L \ot \nu_L)\circ (\id \ot \Psi_{HV} \ot \id)\circ (\Delta \ot \id\ot \id) 
 \circ (\Psi_{HV}^{-1} \ot \id)\circ (\id\ot \Psi_{HV}^{-1})\\
& \oeq{\eqref{PHW}} 
m_V\circ (\nu_L \ot \nu_L)\circ (\Psi_{HV}^{-1}\ot \Psi_{HV}^{-1})
\circ  (\id\ot \Psi_{HV}^{-1}\ot \id) \circ  (\id\ot \id \ot \Delta) \\
&\oeq{\eqref{nuRc}} 
m_V\circ (\nu_R^\circ \ot \nu_R^\circ) 
\circ  (\id\ot \Psi_{HV}^{-1}\ot \id) \circ  (\id\ot \id \ot \Delta).  
\end{align*} 
This shows that $\nu_R^\circ$ satisfies the compatibility condition of  a 
braided right $H$-module algebra with respect to the unmodified coproduct $\Delta$ 
and the braiding $\Psi_{HV}^{-1}$, i.e., 
$\nu_R^\circ$ equips $V$ with the structure of a braided right $H^{(-1,0)}$-module algebra. 

The proof of the opposite version and the proofs for the coactions are similar and left to the reader. 
\end{proof}

In the last proposition, 
we had to replace the product of the braided algebra by the opposite one
in order to interchange left and right actions. 
If $H$ is a braided Hopf algebra with bijective antipode, 
we can use the anti\-pode to turn a left action into a right action of the 
\emph{same} algebra, but with a modified coproduct. To see this, it suffices to observe that, 
if $V$ is a braided left $H$-module and $\varphi: H_0\to H$ is a Hopf algebra homomorphism, 
then  $V$ becomes a braided left $H_0$-module in the obvious way. Thus, setting 
$H_0:= H^{(1,-1)}$ and $\varphi:= S^{-1} : H^{(1,-1)} \to H$, we obtain from Corollary \ref{corS} 
and Proposition \ref{biprop} a right action of the Hopf algebra $H^{(0,-1)}$ 
with the unmodified product, the coproduct $\Delta_{-1}$ and the braiding $\Psi_{HH}^{-1}$. 
Similar arguments can be applied to right actions and left or right coactions. 
We summarize these observations in the next corollary for (co)module algebras. 
\begin{corollary} 
Let $H$ be a braided Hopf algebra with invertible antipode $S$. 
If $V$ is a left $H$-module algebra, then the right action 
$$
\nu_{R,S} : V\ot H \lra V, \quad \nu_{R,S}:=  \nu_L  \circ  \Psi_{HV}^{-1} \circ (\id \ot S^{-1}),  
$$
turns $V$ into a right $H^{(0,-1)}$-module algebra. Analogously, 
a right $H$-module algebra $V$ becomes a left $H^{(0,-1)}$-module algebra 
for the left action defined by 
$$
\nu_{L,S} : V\ot H \lra V, \quad \nu_{L,S}:=  \nu_R  \circ  \Psi_{VH}^{-1} \circ (S^{-1}\ot\id).  
$$

Given a right $H$-comodule algebra $V$, the left coaction 
$$
\rho_{L,S} : V\lra H\ot V, \quad \rho_{L,S} :=(S^{-1}\ot \id)\circ  \Psi_{HV}^{-1}  \circ \rho_R, 
$$
turns $V$ into a left $H^{(-1,0)}$-comodule algebra,  
and a left $H$-comodule algebra $V$ 
becomes a right $H^{(-1,0)}$-comodule algebra for the right coaction defined by 
$$
\rho_{R,S} : V\lra V\ot H, \quad \rho_{R,S} := (\id\ot S^{-1})\circ  \Psi_{VH}^{-1}  \circ \rho_L. 
$$
\end{corollary}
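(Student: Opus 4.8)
The plan is to deduce all four statements from Proposition \ref{biprop} and Corollary \ref{corS} along the lines sketched in the paragraph preceding the corollary. The only extra ingredient needed is the routine observation that a braided left (resp.\ right) module algebra \emph{pulls back} along an isomorphism $\varphi$ of braided bialgebras, and a braided right (resp.\ left) comodule algebra \emph{pushes forward} along such an isomorphism, with the braiding between the (co)acting bialgebra and the (co)module being transported by conjugation with $\varphi$. Concretely, if $V$ is a braided left $H$-module algebra with action $\nu_L$ and $\varphi : H_0 \to H$ is an isomorphism of braided bialgebras, then $\nu_L \circ (\varphi \ot \id)$ makes $V$ a braided left $H_0$-module algebra for the braiding $(\id \ot \varphi^{-1}) \circ \Psi_{HV} \circ (\varphi \ot \id)$; the dual statements for right module algebras, right comodule algebras and left comodule algebras are analogous. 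I would record this as a short lemma whose proof amounts to pre- or post-composing each of the defining identities of a braided (co)module algebra, such as \eqref{num} and \eqref{rhoRm}, with suitable copies of $\varphi$ and $\varphi^{-1}$, using that $\varphi$ is simultaneously an algebra map, a coalgebra map, and intertwines the braidings.

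For the first assertion, by Proposition \ref{lemmD} the space $H^{(1,-1)}$ is a braided Hopf algebra with braiding $\Psi_{HH}$, product $m_1 = m \circ \Psi_{HH}$, coproduct $\Delta_{-1} = \Psi_{HH}^{-1} \circ \Delta$ and antipode $S$, and by Corollary \ref{corS} the map $S^{-1} : H^{(1,-1)} \to H^{(0,0)} = H$ is an isomorphism of braided Hopf algebras. Pulling the braided left $H$-module algebra $V$ back along $S^{-1}$ yields a braided left $H^{(1,-1)}$-module algebra with action $\nu_L \circ (S^{-1} \ot \id)$ and braiding $\Psi' := (\id \ot S) \circ \Psi_{HV} \circ (S^{-1} \ot \id)$. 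Applying Proposition \ref{biprop} to the braided bialgebra $H^{(1,-1)}$ then turns $V$ into a braided right $(H^{(1,-1)})^{(-1,0)}$-module algebra with action $\bigl(\nu_L \circ (S^{-1} \ot \id)\bigr) \circ (\Psi')^{-1}$. Here one checks, using $m_1 \circ \Psi_{HH}^{-1} = m$, that $(H^{(1,-1)})^{(-1,0)}$ coincides with $H^{(0,-1)}$ (unmodified product $m$, coproduct $\Delta_{-1}$, braiding $\Psi_{HH}^{-1}$), and that, since $(\Psi')^{-1} = (S \ot \id) \circ \Psi_{HV}^{-1} \circ (\id \ot S^{-1})$, the action collapses to $\nu_L \circ \Psi_{HV}^{-1} \circ (\id \ot S^{-1}) = \nu_{R,S}$; the accompanying braiding between $V$ and $H^{(0,-1)}$ is $(\Psi')^{-1}$. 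The second assertion is obtained in the same way from the opposite-module part of Proposition \ref{biprop}, pulling the right $H$-module algebra $V$ back along $S^{-1} : H^{(1,-1)} \to H$ before applying the proposition.

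For the coactions the roles of pull-back and push-forward are interchanged. Given a braided right $H$-comodule algebra $V$, Proposition \ref{biprop} makes it a braided left $H^{(0,-1)}$-comodule algebra via $\rho_L^\circ = \Psi_{HV}^{-1} \circ \rho_R$; by Corollary \ref{corS} (applied in the family $H^{(n-1,-n)}$, with $n = 1$ and exponent $-1$) the map $S^{-1} : H^{(0,-1)} \to H^{(-1,0)}$ is an isomorphism of braided bialgebras, so pushing this comodule algebra forward along $S^{-1}$ produces a braided left $H^{(-1,0)}$-comodule algebra with coaction $(S^{-1} \ot \id) \circ \rho_L^\circ = (S^{-1} \ot \id) \circ \Psi_{HV}^{-1} \circ \rho_R = \rho_{L,S}$. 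The last assertion follows analogously, starting from a braided left $H$-comodule algebra $V$ and the right coaction $\rho_R^\circ = \Psi_{VH}^{-1} \circ \rho_L$ of Proposition \ref{biprop}, and pushing the resulting right $H^{(0,-1)}$-comodule algebra forward along $S^{-1} : H^{(0,-1)} \to H^{(-1,0)}$.

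I expect the only real difficulty to be bookkeeping: at each stage one must keep track of which braiding ($\Psi_{HH}$ or $\Psi_{HH}^{-1}$, $\Psi_{HV}$, $\Psi_{VH}$, or a conjugate of one of these by $S^{\pm1}$) lives on which pair of spaces, verify that the isomorphisms from Corollary \ref{corS} really intertwine the relevant braidings (this uses \eqref{Sbraid}), and check that composing the successive identifications telescopes the four-fold composite down to the compact formulas for $\nu_{R,S}$, $\nu_{L,S}$, $\rho_{L,S}$ and $\rho_{R,S}$. All of the substantive algebra is already contained in Propositions \ref{lemmD} and \ref{biprop} and Corollary \ref{corS}; the pull-back/push-forward lemma and the final cancellations of $S$ against $S^{-1}$ are routine.
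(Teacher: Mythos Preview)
Your proposal is correct and follows exactly the route sketched in the paragraph preceding the corollary: pull back the (co)module along the braided Hopf algebra isomorphism $S^{-1}$ furnished by Corollary~\ref{corS}, and then apply Proposition~\ref{biprop}; your explicit bookkeeping of the conjugated braiding $\Psi' = (\id\ot S)\circ\Psi_{HV}\circ(S^{-1}\ot\id)$ and the verification that the resulting composites telescope to the stated formulas is precisely the routine elaboration the paper leaves to the reader.
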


\section{Duality for infinite-dimensional braided algebras, coalgebras, bialgebras and Hopf algebras}  
\label{sec4}

This section provides a detailed description of duality for braided algebras, braided coalgebras,  
and both structures together, i.e., braided bialgebras and Hopf algebras. 
As dual objects may not exist in a braided monoidal category, 
specifically in the infinite-dimensional setting (cf.\ \cite{T99}), 
we continue with our non-categorical approach. That is, we assume 
the existence of a dual space with certain properties without proving 
its existence or uniqueness, which means that we will not define a functor into a dual category. 
Furthermore, our definitions will be rather constructive in the sense that 
they are expressed by explicit formulas derived from the given structures. 

A dual pairing between two vector spaces $U$ and $H$ is a linear map 
$\ip{\cdot}{\cdot} : U\ot H \lra \K$. 
Let $H'$ denote the dual space of $H$. 
Given a subspace $U\subset H'$, we define a dual pairing 
between $U$ and $H$ by 
\[  \label{ipdot}
\ip{\cdot}{\cdot} : U\ot H \lra \K, \quad \ip{f}{a}:= f(a). 
\] 
Identifying by a slight abuse of notation $H$ with 
its image $\iota(H) \subset H''$ under 
the cannonical embedding $\iota : H\to H''$, $\iota(a)(f):= f(a)$, 
the dual pairing \eqref{ipdot} becomes symmetric in the sense that 
$\ip{f}{a}=\ip{a}{f}$. 
A subspace $U\subset H'$ is called non-degenerate, 
or synonymously the dual pairing is called non-degenerate, 
if the associated bilinear map 
$\ip{\cdot\,}{\cdot} : U\times H \to \K$ is non-degenerate. 

The dual pairing $\ip{\cdot}{\cdot}$ defined in \eqref{ipdot} is actually the restriction 
of the fundamental evaluation map $\mathrm{ev}: H'\ot H \to \K$, 
$\mathrm{ev}(f\ot a):= f(a)$. The problem in the infinite setting is
that the so called coevaluation map $\mathrm{coev} : \K \to H\ot H'$ 
may not exist, see e.g.\ \cite{T99}. In other words, the 
braided monoidal category may not be rigid. Nevertheless, the evaluation map, 
or rather its restriction $\ip{\cdot}{\cdot}$, will play a fundamental role 
in the dual pairing between tensor spaces. 
In particular, the dual pairing between $n$-fold tensor product spaces 
will  entirely be traced back to the 
evaluation map on adjacent tensor factors. That is,  
given linear spaces $H_j$ and subspaces 
$U_j\subset H_j'$, $j=1,\ldots, n$, we define 
\begin{align}  \label{braidip} 
\begin{split}
&\ipp{\,\cdot\,}{\hs\cdot\,}:  (U_n\ot \dots \ot  U_1)\ot (H_1\ot  \dots \ot H_n) \lra \K, \\
& \ipp{\,\cdot\,}{\hs\cdot\,} 
 := \ip{\,\cdot\,}{\,\cdot\,} \circ \big(\id \ot \ip{\,\cdot\,}{\,\cdot\,} \ot \id\big) 
\circ  \dots \circ \big( \id \ot \dots \ot \ip{\,\cdot\,}{\,\cdot\,} \ot \dots \ot \id\big).   
\end{split}
\end{align} 
This definition is consistent with the representation of braided monoidal categories 
by braided strings. Once a convention for a dual pairing between 
tensor product spaces  is agreed upon, it should be avoided 
to use isomorphisms between tensor spaces in the dual pairing that do not arise from braidings. 
For instance, to pair the second leg in $H'\ot H'$ with the second leg 
in $H\ot H$, it is more appropriate to apply first a braiding $\Psi_{H'H}$ and 
to consider 
$$
(\ip{\cdot\,}{\cdot} \ot \ip{\cdot\,}{\cdot}) \circ (\id \ot \Psi_  {H'H} \ot \id) : 
H'\ot H' \ot H\ot H \lra \K. 
$$
On the other hand, we will also make use of the 
embedding $U_1\ot \cdots \ot U_n\subset (H_1\ot  \dots \ot H_n)'$ 
(mind the order). In this case, we write 
$$
(f_1\ot \cdots \ot f_n)(a_1\ot \cdots \ot a_n) := f_1(a_1)\cdots f_n(a_n).  
$$ 

Now let $H$ be a braided vector space with braiding $\Psi_{HH}$. 
Our first aim is to show that, for appropriate subspaces $U\subset H'$, 
$\Psi_{HH}$ induces braidings on $U \ot U$ and between $U$ and $H$. 
Moreover, the braidings between $U$ and $H$ will be compatible 
with the multiplication and comultiplication on $H$ if these structures  
are compatible with $\Psi_{HH}$. According to our non-categorical approach, we will 
not assume that $U$ is unique nor prove that it always exists.  

To begin, consider the linear map 
\[  \label{UU}
\Psi_{H'H'}:  H' \ot  H'\to (H\ot H)', \ \ 
\Psi_{H'H'}(f\ot g)(b\ot a):= \ipp{f\ot g}{\Psi_{HH}(a\ot b)}, 
\]
where $a,b\in H$ and $f,g\in H'$.  
Note that we do not assume that $\Psi_{H'H'}(H' \ot  H') \subset H' \ot  H'$. 
Similarly, using the fact that the canonical pairing $\ip{\cdot\,}{\cdot} : H'\ot H \to \K$ is non-degenerate, 
we define  
\begin{align}    \label{PsiHH}
&\Psi_{H'H}:  H' \ot  H \lra (H' \ot  H)' ,\ \  
\Psi_{H'H}(g\!\ot\! a)(f\!\ot\! b):= \ipp{f\!\ot\! g}{\Psi_{HH}(a\!\ot\! b)},  \\
&\Psi_{H'H}^\circ :  H' \ot  H \lra  (H' \ot  H)', \ \ 
\Psi_{H'H}^\circ (g\!\ot\! a)(f\!\ot\! b):= \ipp{f\!\ot\! g}{\Psi_{HH}^{-1}(a\!\ot\! b)},\label{PsiHHcirc}\\ 
&\Psi_{HH'}:  H \ot  H' \lra (H \ot  H')', \ \ 
\Psi_{HH'}(b\ot f)(a\ot g):= \ipp{f\ot g}{\Psi_{HH}(a\ot b)} ,  \\  \label{PHU}
&\Psi_{HH'}^\circ :  H \ot  H' \lra (H \ot  H')', \ \ 
\Psi_{HH'}^\circ (b\ot f)(a\ot g):= \ipp{f\ot g}{\Psi_{HH}^{-1}(a\ot b)}.  
\end{align} 

It will be convenient to introduce some Sweedler-type notation. 
As usual, a coproduct is written $\Delta(a) = a_{(1)} \ot a_{(2)}$ with increasing numbers for 
multiple coproducts. Analogously, left and right coactions are written 
$\rho_L(v) = v_{(-1)} \ot v_{(0)}$ and $\rho_R(v) = v_{(0)} \ot v_{(1)}$, respectively. 
For a given braiding on a vector space $H$, we employ the notation  
$$
\Psi_{HH}(a\ot b) = b^{\br{1}} \ot  a^{\br{2}} , \quad a,b\in H. 
$$ 
For multiple braidings, an index will be used to indicate the chronological order. 
As an example, a combination of \eqref{YBE} and \eqref{PHW} gives for $a$ and $b$ from a braided coalgebra 
\[ \label{bcop}
b^{\br{1}_1}\ot a^{\br{2}_1}{}_{(2)}{}^{\br{1}_2}\ot a^{\br{2}_1}{}_{(1)}{}^{\br{2}_2} 
=  b^{\br{1}_2\br{1}_3}   \ot   a_{(2)}{\!}^{\br{1}_1\br{2}_3}  \ot a_{(1)}{\!}^{\br{2}_1\br{2}_2}. 
\]
We use a back-prime to denote the inverse of  $\Psi_{HH}$, i.e., 
\[ \label{bpr} 
\Psi_{HH}^{-1}(a\ot b) = b^{\pbr{1}} \ot  a^{\pbr{2}} , \quad a,b\in H. 
\]
Then clearly  
\[    \label{brpbr} 
   a^{\br{2} \pbr{1}}  \ot  b^{\br{1} \pbr{2}}  = a \ot  b = a^{\pbr{2} \br{1}}  \ot  b^{\pbr{1} \br{2}}, \quad a,b\in H. 
\]
In a similar vein,  \eqref{YBE} and  \eqref{brpbr} yield the identity 
\[      \label{YBe}
c^{\br{1}\pbr{1}_1} \ot b^{\br{2} \pbr{1}_2} \ot a^{\pbr{2}_1\pbr{2}_2}  
= c^{\pbr{1}_2\br{1}} \ot b^{\pbr{1}_1\br{2}} \ot a^{\pbr{2}_1 \pbr{2}_2}, 
\quad  a, b,c\in  H. 
\] 
The same notations will be used for subspaces $U\subset H'$ such that $\Psi_{H'H'}(U\ot U) \subset U\ot U$. 
Then \eqref{UU} reads for instance 
\[ \label{fagb}
f^{\br{2}}(a)\hs g^{\br{1}}(b) = g(b^{\br{1}})\hs f(a^{\br{2}}), \quad  f,g\in U, \ \ a,b\in H. 
\]
If $\Psi_{H'H}(g\ot a),\,\Psi_{H'H}^\circ (g\ot a)\in H\ot U\subset (H'\ot H)'$ 
for $g\ot a\in U\ot H$, we write 
\[ \label{cb}
\Psi_{H'H}(g\ot a) := a^{\Br{1}} \ot  g^{\Br{2}}, \qquad 
\Psi_{H'H}^\circ(g\ot a) := a^{\Br{1}^\circ} \ot g^{\Br{2}^\circ}, 
\]
and a similar notation will be employed for $\Psi_{HH'}$ and $\Psi_{HH'}^\circ$. 
Under the assumption that all maps belong to the tensor products of the corresponding spaces, 
\eqref{UU}--\eqref{PHU} yield in Sweedler-type notation 
\begin{align}       \label{phh}
&g(b^{\br{1}})\hs f(a^{\br{2}}) =  g^{\br{1}}(b)\hs f^{\br{2}}(a) = f(a^{\Br{1}}) \hs  g^{\Br{2}}(b) 
= f^{\Br{1}}(a) \hs  g(b^{\Br{2}}),  \\ 
& g(b^{\pbr{1}})\hs f(a^{\pbr{2}}) = g^{\pbr{1}}(b)\hs f^{\pbr{2}}(a)
= f(a^{\Br{1}^\circ}) \hs  g^{\Br{2}^\circ}\!(b) = f^{\Br{1}^\circ}\!(a) \hs  g(b^{\Br{2}^\circ}) .\label{cphh}
\end{align}
Furthermore, if $U\subset H'$ is non-degenerate, 
we conclude from \eqref{phh} and \eqref{cphh} that 
\begin{align} \label{phcph}
&f(a^{\Br{1}}) \hs  g^{\Br{2}} =f^{\br{2}}\hsp (a)\hs g^{\br{1}}, &
&a^{\Br{1}}\hs  g^{\Br{2}}(b) = g(b^{\br{1}})\, a^{\br{2}}, \\
&f(a^{\Br{1}^\circ}) \hs  g^{\Br{2}^\circ}= f^{\pbr{2}}\hsp (a)\hs g^{\pbr{1}}, &
&a^{\Br{1}^\circ} \hs  g^{\Br{2}^\circ}(b) = g(b^{\pbr{1}})\, a^{\pbr{2}} ,   \label{ophcph}
\end{align}
for all $f,g\in U$ and $a,b\in H$. 

The next lemma shows that, under suitable conditions on $U\subset H'$, the 
braiding $\Psi_{HH}$ induces braidings on $U \ot U$ and between $U$ and $H$ 
which are compatible with possibly additional structures on $H$.

\begin{lemma}   \label{lembraided} 
Let $H$ be a braided vector space and $U\subset H'$ a non-degenerate subspace. 
Assume that 
\[ \label{pbbraid}
\Psi_{UU}\!:=\! \Psi_{H'H'}\!\!\upharpoonright_{U\ot \hs U}\;  : U\ot U \lra  U\ot U 
\] 
is bijective. Then $\Psi_{UU}$ defines a braiding on $U$. If 
\[
\Psi_{UH}\!:=\! \Psi_{H'H} \!\!\upharpoonright_{U\ot H} \; :  U\ot H\lra H\ot U  \label{brUH}
\]
is bijective, then $\Psi_{UH}$ turns $H$ into a left $U$-braided vector space and 
$U$ into a right $H$-braided vector space 
with respect to the braidings 
$\Psi_{HH}$ and $\Psi_{HH}^{-1}$ on $H$, and $\Psi_{UU}$ and 
$\Psi_{UU}^{-1}$ on $U$. 

In case 
\[  \label{ipbb}
\Psi_{UH}^\circ \!:=\! \Psi_{H'H}^\circ \!\!\upharpoonright_{U\ot H} \; :  U\ot H\lra H\ot U 
\] 
is bijective, it also turns $H$ into a left $U$-braided vector space and 
$U$ into a right $H$-braided vector space with respect to the braidings 
$\Psi_{HH}$ and $\Psi_{HH}^{-1}$ on $H$, and $\Psi_{UU}$ and 
$\Psi_{UU}^{-1}$ on $U$. 

If $H$ is a braided (unital) algebra, then the braidings $\Psi_{UH}$ and $\Psi_{UH}^\circ$   
are compatible with the multiplication on $H$. 
If $H$ is a braided coalgebra, then  the braidings  $\Psi_{UH}$ and $\Psi_{UH}^\circ$  
are compatible with the comultiplication on $H$. 

The analogous statements hold for the opposite versions with respect to the braidings  
$$
\Psi_{HU}\!:=\! \Psi_{HH'} \!\!\upharpoonright_{H\ot\hs U} \; :  H\ot U\lra U\ot H, 
$$
and
$$
\Psi_{HU}^\circ \!:=\! \Psi_{HH'}^\circ \!\!\upharpoonright_{H\ot \hs U} \; :  H\ot U\lra U\ot H. 
$$
 \end{lemma}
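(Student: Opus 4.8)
The plan is to verify each asserted property directly from the defining formulas \eqref{UU}--\eqref{PHU}, translating everything into the Sweedler-type notation established in \eqref{phh}--\eqref{ophcph} and testing identities by pairing against arbitrary elements of $H$ (using non-degeneracy of $U$). First I would record the basic adjunction-type relations once and for all: from \eqref{phh}, $\Psi_{UH}(g\ot a)=a^{\Br1}\ot g^{\Br2}$ is characterized by $f(a^{\Br1})\,g^{\Br2}(b)=f^{\br2}(a)\,g^{\br1}(b)$, and similarly $\Psi_{UH}^\circ$ via the back-primed braiding as in \eqref{cphh}, \eqref{ophcph}. The bijectivity hypotheses are assumed, so once an identity of the target maps is proven it suffices to check it after pairing with test vectors. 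The guiding principle throughout is that an identity like \eqref{YBE} or \eqref{VW} for the induced braidings reduces, after moving everything inside the evaluation pairing, to the corresponding identity \eqref{YBE} or \eqref{bcop} for $\Psi_{HH}$ on $H$ — possibly with the tensor factors reversed, which is why the inverse braidings $\Psi_{HH}^{-1}$ and $\Psi_{UU}^{-1}$ appear on the $H$- and $U$-sides.

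Concretely, I would proceed in the following order. (1) $\Psi_{UU}$ satisfies the Yang--Baxter equation \eqref{YBE}: pair both sides against $a_1\ot a_2\ot a_3\in H^{\ot 3}$; each side unwinds via \eqref{fagb} to a triple application of $\Psi_{HH}$ read in reverse order, and the two results agree by \eqref{YBE} for $\Psi_{HH}$. (2) $\Psi_{UH}$ makes $H$ a left $U$-braided vector space, i.e.\ \eqref{VW} holds with $(V,W)=(U,H)$: pair against the appropriate tensor of $H$'s and $U$'s and reduce to the mixed hexagon which is just \eqref{YBE} again after reindexing. That $U$ becomes a right $H$-braided vector space with respect to $\Psi_{HH}^{-1}$ and $\Psi_{UU}^{-1}$ follows either by the same computation with the roles of the two slots swapped, or more cheaply by invoking Lemma \ref{Linv}: a left $U$-braided structure on $H$ via $\Psi_{UH}$ automatically yields a right $H$-braided structure on $U$ via $\Psi_{UH}^{-1}$, and one identifies $\Psi_{UH}^{-1}$ with the map built from $\Psi_{HH}^{-1}$ by a short pairing argument using \eqref{brpbr}. (3) The statements for $\Psi_{UH}^\circ$ are identical with $\Psi_{HH}$ replaced by $\Psi_{HH}^{-1}$ throughout, using \eqref{YBe} in place of \eqref{YBE}. (4) Compatibility with multiplication, \eqref{AWm}: if $H$ is a braided algebra, pair $\Psi_{UH}\circ(m\ot\id)$ against $f\ot a\ot b$; by definition this is $\ipp{f^{\ldots}}{\Psi_{HH}(m(a)\ot b)}$, and \eqref{AWm} for $\Psi_{HH}$ rewrites it as the pairing of the right-hand side, after noting that $m$ on $H$ is the transpose of $\Delta$ on $H'$ in the relevant slot — but since we are pairing into $\K$ we never need $\Delta$ on $U$, only \eqref{AWm} on $H$ itself. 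Condition \eqref{1W} is immediate from $\Psi_{HH}(1\ot a)=a\ot 1$, which holds since $\Psi_{HH}$ is compatible with $m$ and $1\in H$. (5) Compatibility with comultiplication, \eqref{PHW}: if $H$ is a braided coalgebra, pair $(\id\ot\Delta)\circ\Psi_{UH}$ against $f\ot a\ot b$; using \eqref{UU} this becomes $\ipp{f\ot g}{\Psi_{HH}(a\ot b)}$ with the middle $\Delta$ absorbed into the three-fold evaluation, and \eqref{bcop} (which is exactly \eqref{YBE} combined with \eqref{PHW}) matches it to the right-hand side. The counit relation follows from the second half of \eqref{PHW} for $\Psi_{HH}$. (6) Finally, the opposite versions with $\Psi_{HU}$ and $\Psi_{HU}^\circ$: these are obtained by the mirror-image argument, or by applying parts (1)--(5) to the braided vector space $H$ with braiding $\Psi_{HH}^{-1}$ — noting by Lemma \ref{Linv} that this is again a braided (co)algebra — together with the observation that $\Psi_{HH'}$ on $H\ot H'$ is the transpose of $\Psi_{HH}$ in the opposite order, so the roles of $U$ and $H$ simply interchange.

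The main obstacle I anticipate is purely bookkeeping: keeping straight which composite of $\Psi_{HH}^{\pm1}$ a given induced braiding is the transpose of, and in which order the tensor legs get paired, so that the reversal of order intrinsic to transposition is correctly matched against the $\pm$ signs on $\Psi_{HH}$ and $\Psi_{UU}$. The hexagon/Yang--Baxter verifications themselves are mechanical once the pairing conventions \eqref{braidip} and \eqref{phh}--\eqref{ophcph} are fixed; the real care is in the mixed relations \eqref{VW} and \eqref{WV} and in confirming that $\Psi_{UH}^{-1}$ (which exists by hypothesis) genuinely coincides with the map one would write down from $\Psi_{HH}^{-1}$, rather than merely being some abstract inverse — this is where \eqref{brpbr} and \eqref{YBe} do the work. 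I would present the computation for $\Psi_{UH}$ in full detail and then remark that the cases $\Psi_{UH}^\circ$, $\Psi_{HU}$, $\Psi_{HU}^\circ$ follow by the same method after the indicated substitutions, rather than repeating four near-identical calculations.
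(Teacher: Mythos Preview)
Your plan is correct and follows the same approach as the paper: verify each identity by pairing against test elements of $H$ (or $U$) and reducing, via \eqref{fagb} and \eqref{phh}--\eqref{ophcph}, to the corresponding identity for $\Psi_{HH}$. The paper carries out exactly the computations you outline for \eqref{YBE}, \eqref{VW}, \eqref{WV}, and the compatibility conditions, then handles $\Psi_{UH}^\circ$ and the opposite versions by the indicated substitutions.

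One point needs correction. In step (2) your proposed shortcut via Lemma~\ref{Linv} does not work: that lemma tells you that a left $U$-braided structure on $H$ via $\Psi_{UH}$ yields a \emph{right $U$-braided structure on $H$} via $\Psi_{UH}^{-1}$, not a right $H$-braided structure on $U$. The conditions \eqref{VW} and \eqref{WV} for the same map $\Psi_{UH}$ are logically independent --- one involves $\Psi_{UU}$, the other $\Psi_{HH}$ --- and the paper accordingly does a second direct computation for \eqref{WV}. Your first alternative (``the same computation with the roles of the two slots swapped'') is the right one; just drop the Lemma~\ref{Linv} shortcut here. Lemma~\ref{Linv} \emph{is} what gives you the statements for $\Psi_{HH}^{-1}$ and $\Psi_{UU}^{-1}$ once \eqref{VW} and \eqref{WV} are established for $\Psi_{HH}$ and $\Psi_{UU}$. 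Also, minor bookkeeping: in (4) and (5) the multiplication and comultiplication sit on $H$, which is the second tensor factor of $U\ot H$, so the relevant compatibility equations are \eqref{VAm}/\eqref{V1} and \eqref{PVH}, not \eqref{AWm}/\eqref{1W} and \eqref{PHW}; the paper uses \eqref{AWm} and \eqref{PHW} on the $H$-side to deduce \eqref{VAm} and \eqref{PVH} on the $(U,H)$-side. This is precisely the reversal-of-order bookkeeping you flag as the main obstacle.
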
 
\begin{proof} Let $f\ot g\ot h\in U\ot U\ot U$. 
Since $H\ot H \ot H$ separates the points of $U\ot U\ot U\subset H'\ot H' \ot H'$, we can 
prove \eqref{YBE} by evaluating both sides on all $x\ot y\ot z\in H\ot H \ot H$. 
In Sweedler-type notation, we get 
\begin{align}  \label{ib} 
\begin{split}
&h^{\br{1}_2 \br{1}_3 }(x) \hs g^{\br{1}_1\br{2}_3 }(y)\hs f^{\br{2}_1\br{2}_2 }(z) 
\oeq{\eqref{fagb}} 
h^{  }(x^{\br{1}_1\br{1}_2})\hs g^{ }(y^{\br{2}_1\br{1}_3})\hs f(z^{\br{2}_2\br{2}_3}) \\
&\oeq{\eqref{YBE}} 
h(x^{\br{1}_2\br{1}_3})\hs g^{ }(y^{\br{1}_1\br{2}_3})\hs f(z^{\br{2}_1\br{2}_2})
\oeq{\eqref{fagb}}  
h^{\br{1}_1 \br{1}_2 }(x) \hs g^{\br{2}_1\br{1}_3 }(y)\hs f^{\br{2}_2\br{2}_3 }(z). 
\end{split}
\end{align} 
As $\ip{\cdot\,}{\cdot} : U\ot H \to \K$ is non-degenerate, 
we conclude that $\Psi_{UU}$ satisfies \eqref{YBE}, and so does $\Psi_{UU}^{-1}$ 
by Lemma \ref{Linv}. 

To prove \eqref{VW}, we use again the non-degeneracy of the pairing $\ip{\cdot\,}{\cdot} : U\ot H \to \K$ 
and evaluate $a^{\Br{1}_1 \Br{1}_2 }\ot g^{\br{1}\Br{2}_2 } \ot f^{\br{2}\Br{2}_1 }\in H\ot U\ot U$ on all 
$h\ot y\ot z\in U\ot H\ot H$, where $a\in H$  and $f,g\in U$. 
This gives 
\begin{align} \label{ibr}
&h(a^{\Br{1}_1 \Br{1}_2 })\hs g^{\br{1}\Br{2}_2 }(y)\hs f^{\br{2}\Br{2}_1 }(z) 
\oeq{\eqref{phh}}  h^{\br{2}_2}(a^{\br{2}})\hs g^{\br{1}_1\br{1}_2 }(y)\hs f^{\br{2}_1}(z^{\br{1} }) \\ \nonumber
&\oeq{\eqref{fagb}}   h^{}(a^{\br{2}_1 \br{2}_2})\hs g^{}(y^{\br{1}_2 \br{1}_3 })\hs f^{}(z^{\br{1}_1\br{2}_3 }) 
\oeq{\eqref{YBE}}   h^{}(a^{\br{2}_2 \br{2}_3})\hs g^{}(y^{\br{1}_1 \br{1}_2 })\hs f^{}(z^{\br{2}_1\br{1}_3 }) \\ \nonumber
&\oeq{\eqref{fagb}}   h^{\br{2}}(a^{\br{2}_2 })\hs g^{}(y^{\br{1}_1 \br{1}_2 })\hs f^{\br{1}}(z^{\br{2}_1}) 
\oeq{\eqref{phh}}  h^{}(a^{\Br{1}_1\Br{1}_2})\hs g^{\Br{2}_1}(y^{\br{1}})\hs f^{\Br{2}_2}(z^{\br{2}}) \\\nonumber
&\oeq{\eqref{fagb}}  h^{}(a^{\Br{1}_1\Br{1}_2})\hs g^{\Br{2}_1 \br{1}}(y^{})\hs f^{\Br{2}_2 \br{2}}(z^{}) . 
\end{align} 
Since $U\ot H\ot H$ separates the points of $H\ot U\ot U$, these calculations show that \eqref{VW}
is satisfied. 

Much in the same way, for all $f\ot a\ot b\in U\ot H \ot H$ and $g\ot h \ot z\in U\ot U\ot H$, we compute 
\begin{align*}
& g(b^{\br{1}\Br{1}_1}) \, h(a^{\br{2}\Br{1}_2}) \, f^{\Br{2}_1 \Br{2}_2}(z) 
 \oeq{\eqref{phh}} g(b^{\br{1}_1\br{2}_3}) \, h(a^{\br{2}_1\br{2}_2}) \, f^{}(z^{\br{2}_2\br{1}_3 }) \\
&\oeq{\eqref{YBE}} g(b^{\br{2}_1\br{1}_3}) \, h(a^{\br{2}_2\br{2}_3}) \, f^{}(z^{\br{1}_1\br{1}_2 }) 
\oeq{\eqref{phh} } g^{\br{1}}(b^{\Br{1}_2}) \, h^{\br{2}}(a^{\Br{1}_1}) \, f^{\Br{2}_1\Br{2}_2 }(z^{}) \\
&\oeq{\eqref{fagb}} g^{}(b^{\Br{1}_2\br{1}}) \, h^{}(a^{\Br{1}_1\br{2}}) \, f^{\Br{2}_1\Br{2}_2 }(z^{}) , 
\end{align*} 
which proves \eqref{WV}. 
This finishes the proof of first part of the lemma regarding the braidings 
$\Psi_{HH}$, $\Psi_{UU}$ and $\Psi_{UH}$. By Lemma  \ref{Linv}, 
the same holds with respect to the braidings 
$\Psi_{HH}^{-1}$, $\Psi_{UU}^{-1}$ and $\Psi_{UH}$. 
Replacing in above calculations $\Psi_{HH}$ by $\Psi_{HH}^{-1}$ and $\Psi_{UU}$ by 
$\Psi_{UU}^{-1}$  shows the analogous results for $\Psi_{UH}^\circ$

Let $H$ be a braided algebra. 
Using the fact that $\Psi_{HH}$ satisfies \eqref{AWm} and \eqref{1W}, we get 
for all $f,g\in U$ and $a,b\in H$, 
\begin{align} \begin{split} \label{psicomp}
f((ab)^{\Br{1}})\hs g^{\Br{2}}(c)&\oeq{\eqref{phh}}f((ab)^{\br{2}})\hs g(c^{\br{1}})
\oeq{\eqref{AWm}} f(a^{\br{2}_2}b^{\br{2}_1})\hs g(c^{\br{1}_1\br{1}_2})\\
&\oeq{\eqref{phh}} f(a^{\Br{1}_1}b^{\Br{1}_2})\hs g^{\Br{2}_1\Br{2}_2}(c)
\end{split}
\end{align} 
and 
\[ \label{1comp}
f(1^{\Br{1}}) g^{\Br{2}}(a) \oeq{\eqref{phh}} f(1^{\br{2}}) g(a^{\br{1}}) 
\oeq{\eqref{1W}} f(1) g(a). 
\]
This implies the compatibilty of $\Psi_{UH}$ with the multiplication on $H$. 

By Lemma \ref{Linv}, 
$\Psi_{HH}^{-1}$ is also compatible with the multiplication and on $H$. 
Replacing $\Br{k}$ by $\Br{k}^\circ$ and 
$\br{k}$ by $\pbr{k}$ in \eqref{psicomp} and \eqref{1comp} shows the compatibilty of
$\Psi_{UH}^\circ$ with the multiplication on $H$. 
Similarly, if $H$ is a braided coalgebra, we obtain 
for $f,g,h\in U$ and $a,b\in H$ that 
\begin{align*}
f(a^{\Br{1}}{\!}_{(1)} )\hs   g(a^{\Br{1}}{\!}_{(2)})\hs    h^{\Br{2}}(b) 
&\oeq{\eqref{phcph}} f(a^{\br{2}}{\!}_{(1)} )\hs   g(a^{\br{2}}{\!}_{(2)})\hs    h(b^{\br{1}}) 
\oeq{\eqref{PHW}} f(a_{(1)}{\!}^{\br{2}_2})\hs   g(a_{(2)}{\!}^{\br{2}_1})\hs    h(b^{\br{1}_1\br{1}_2}) \\
&\oeq{\eqref{phh}} f(a_{(1)}{\!}^{\Br{1}_1})\hs   g(a_{(2)}{\!}^{\Br{1}_2})\hs    h^{\Br{2}_1\Br{2}_2}(b)
\end{align*} 
and 
$$
\vare(a^{\Br{1}})\hs  h^{\Br{2}}(b) \oeq{\eqref{phcph}} \vare(a^{\br{2}})\hs h(b^{\br{1}}) 
\oeq{\eqref{PHW}} \vare(a)\hs h(b), 
$$
which proves \eqref{PVH} for $\Psi_{UH}$. 

The same proof with the notational changes mentioned above 
shows the compatibility of $\Psi_{UH}^\circ$ with the multiplication or 
the comultiplication (as applicable) on $H$. 
The statements of the opposite versions are proven analogously. 
\end{proof}

The following definition of a dual pairing between braided bialgebras 
is the central definition of this section because it will also serve as a  
guiding principle for duality between braided algebras and braided coalgebras. 
Similar definitions can be found in \cite{RGG,GZ,HZ,Maj96}.  
\begin{definition}    \label{dualp}
Let $U$ and $H$ be braided bialgebras and let $\Upsilon_{UH} : U \ot  H\to H\ot U$ be a braiding such that 
$H$ is a left $U$-braided vector space, $U$ is a right $H$-braided vector space, and the braiding is 
compatible with the multiplications and comultiplications of $U$ and $H$. 
A dual pairing between $U$ and $H$  is a linear map $\ip{\cdot\,}{\cdot} : U\ot H \to \K$ such that 
\begin{align}   \label{mD}
&\ip{\cdot\,}{\cdot} \circ (m \ot \id ) 
= (\ip{\cdot\,}{\cdot} \ot \ip{\cdot\,}{\cdot}) \circ (\id \ot \Upsilon_{UH} \ot \id) \circ (\id\ot \id\ot \Delta), \\
&\ip{\cdot\,}{\cdot} \circ (\id \ot m )     \label{Dm}
= (\ip{\cdot\,}{\cdot} \ot \ip{\cdot\,}{\cdot}) \circ (\id \ot \Upsilon_{UH} \ot \id) \circ (\Delta\ot\id\ot \id ), \\ 
&\ip{1}{a}=\eps(a),\quad \ip{h}{1}=\eps(h), \quad a\in H, \ \ h\in U.    \label{1a}
\end{align} 
For a dual pairing between braided Hopf algebras, 
it is additionally required that 
\[
\ip{\cdot\,}{\cdot} \circ (S \ot \id ) = \ip{\cdot\,}{\cdot} \circ (\id \ot S). 
\]
If the dual pairing is non-degenerate, 
$U$ is called a left dual of $H$, and $H$ is called a right dual of $U$. 
\end{definition}  

Given a braided coalgebra $H$, the convolution product \eqref{conv} turns $H'$ into an associative algebra. 
In general, this product will not be compatible with the dual pairing of Definition~\ref{dualp}. 
On the other hand, under the assumptions of Definition \ref{dualp}, the product on $U\subset H'$ is 
uniquely determined by \eqref{mD} since $H$ separates the points of $H'$. 
For this reason, we will consider an alternative 
convolution product on $H'$ such that the equality in \eqref{mD} 
is automatically met for subalgebras $U\subset H'$ 
satisfying the assumptions of Lemma \ref{lembraided}.  

\begin{proposition}  \label{lust}
Let $H$ be a braided coalgebra and set 
\begin{align}   \label{ust}
&\ust : H' \ot H' \lra H', \quad f\ust g(a) := \ipp{f\ot g\,}{\,\Psi_{HH}\!\circ\!\Delta(a)}
=f({a_{(1)}}^{\!\br{2}})\hs g({a_{(2)}}^{\!\br{1}}),& 
\end{align}
where $f,g\in H'$ and $a\in H$. 
Then  \eqref{ust} turns $H'$ into an associative unital algebra with 
the unit element given by the counit of~$H$. 

Suppose that $U\subset H'$ is a (unital) subalgebra separating the points of $H$.  
If $\Psi_{UU}$ defined in \eqref{pbbraid} is bijective, 
then it turns $U$ into a braided algebra. 
In case $\Psi_{UH}$ or $\Psi_{UH}^\circ$ satisfies the assumptions of Lemma \ref{lembraided},  
then it defines a braiding that is compatible with the multiplication on $U$.  
The same remains true for the opposite version with $\Psi_{UH}$ and $\Psi_{UH}^\circ$ 
replaced by $\Psi_{HU}$ and $\Psi_{HU}^\circ$, respectively. 
\end{proposition}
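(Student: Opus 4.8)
The plan is to verify each assertion in turn, exploiting that $H$ separates the points of $H'$ (and that $U$ separates the points of $H$) so that all identities can be checked by evaluating on elements of $H$, reducing everything to the braided-coalgebra axioms \eqref{PHW}, \eqref{PVH}, \eqref{PDD} together with the Yang--Baxter equation \eqref{YBE}.

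First I would check that $\ust$ is associative. Writing out $f\ust(g\ust h)(a)$ and $(f\ust g)\ust h(a)$ in Sweedler notation via \eqref{ust}, both become triple pairings of $f\ot g\ot h$ against a threefold braided rearrangement of $\Delta^{(2)}(a)$; coassociativity lets one align the coproducts, and the identity \eqref{bcop} (a consequence of \eqref{YBE} and \eqref{PHW}), or equivalently \eqref{PDD}, shows the two braided arrangements agree. For the unit, $f\ust\eps(a) = f({a_{(1)}}^{\br{2}})\,\eps({a_{(2)}}^{\br{1}})$, and the counit relation in \eqref{PHW}, namely $(\id\ot\eps)\circ\Psi_{HH}=\eps\ot\id$, together with the counit axiom collapses this to $f(a)$; the left unit is handled symmetrically using the second relation of \eqref{PVH}. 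Thus $(H',\ust,\eps)$ is an associative unital algebra.

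Next, for a subalgebra $U\subset H'$ separating the points of $H$: the hypothesis that $\Psi_{UU}$ from \eqref{pbbraid} is bijective, combined with Lemma \ref{lembraided}, already gives that $\Psi_{UU}$ is a braiding on $U$. It remains to show $\Psi_{UU}$ is compatible with $\ust$, i.e.\ that \eqref{AWm}, \eqref{VAm}, \eqref{1W}, \eqref{V1} hold for $(U,\ust)$ and $\Psi_{UU}$. I would verify \eqref{Pmm} directly: evaluate both sides of the claimed identity
$$
\Psi_{UU}\circ(\ust\ot\ust) = (\ust\ot\ust)\circ(\id\ot\Psi_{UU}\ot\id)\circ(\Psi_{UU}\ot\Psi_{UU})\circ(\id\ot\Psi_{UU}\ot\id)
$$
on an arbitrary $f_1\ot f_2\ot f_3\ot f_4\in U^{\ot 4}$ and pair against $a\ot b\in H\ot H$; using \eqref{ust} and the defining relation \eqref{fagb} for $\Psi_{UU}$ repeatedly, the left side becomes a pairing of the $f_i$ against a braided rearrangement of $\Delta(a)\ot\Delta(b)$ mediated by $\Psi_{HH}(a\ot b)$, and on the right side one uses \eqref{PDD} for $H$ to move the coproducts past the single braiding. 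By nondegeneracy this yields \eqref{Pmm}; a similar but shorter computation with one tensor factor replaced by $H$ gives the one-sided compatibilities \eqref{AWm}, \eqref{VAm} when $\Psi_{UH}$ (or $\Psi_{UH}^\circ$) is in play, using \eqref{phh} (resp.\ \eqref{cphh}); and the unit conditions \eqref{1W}, \eqref{V1} follow from \eqref{1comp} and its variant.

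For the clause about $\Psi_{UH}$ (resp.\ $\Psi_{UH}^\circ$) being compatible with the multiplication on $U$: Lemma \ref{lembraided} already records that $\Psi_{UH}$ and $\Psi_{UH}^\circ$ are well-defined braidings making $H$ a left $U$-braided vector space and $U$ a right $H$-braided vector space. What is new here is the algebra-compatibility \eqref{VAm} for $\nu = m_U = \ust$. I would check this by pairing $\Psi_{UH}\circ(\id\ot\ust)(f\ot g\ot a)$ and $(\ust\ot\id)\circ(\id\ot\Psi_{UH})\circ(\Psi_{UH}\ot\id)(f\ot g\ot a)$ against an arbitrary $b\in H$, expanding via \eqref{cb}, \eqref{phh}, \eqref{ust}, and reducing to \eqref{PHW}/\eqref{PDD} and \eqref{YBE}; the $\Psi_{UH}^\circ$ case is identical after the notational substitution $\Br{k}\rightsquigarrow\Br{k}^\circ$, $\br{k}\rightsquigarrow\pbr{k}$, using \eqref{ophcph} and Lemma \ref{Linv}. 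The opposite-version statements for $\Psi_{HU}$, $\Psi_{HU}^\circ$ follow by the symmetric argument already indicated in Lemma \ref{lembraided}. The main obstacle is purely bookkeeping: keeping the chronological braiding indices straight when combining a twofold coproduct with a single braiding $\Psi_{HH}(a\ot b)$, which is exactly where \eqref{PDD} (the ``doubled'' compatibility of $\Psi_{HH}$ with $\Delta$) must be invoked in the right place.
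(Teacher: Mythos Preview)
Your overall strategy---evaluate everything on elements of $H$ and reduce to the braided-coalgebra axioms---is exactly the paper's method, and your treatment of associativity and the unit is fine (the paper shortcuts associativity by invoking Proposition~\ref{lemmD}, observing that $\ust$ is just the convolution product for the coalgebra $(H,\Delta_1,\eps)$ with $\Delta_1=\Psi_{HH}\circ\Delta$, but your direct verification works too).

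There is, however, a genuine gap in your plan for showing that $(U,\ust,\Psi_{UU})$ is a braided algebra. You correctly identify that what must be checked are the one-sided compatibilities \eqref{AWm} and \eqref{VAm} (plus \eqref{1W}, \eqref{V1}) for $\Psi_{UU}$, but then you propose to verify the two-sided identity \eqref{Pmm} instead. Equation \eqref{Pmm} is a \emph{consequence} of \eqref{AWm} and \eqref{VAm}, not a substitute for them; proving \eqref{Pmm} alone does not give the one-sided conditions. (In the unital case one can in fact recover \eqref{AWm} and \eqref{VAm} from \eqref{Pmm} by specializing appropriate tensor factors to $1$ and using \eqref{1W}, \eqref{V1}, but you do not mention this reduction, and it is an extra argument.) The paper simply verifies \eqref{AWm} and \eqref{VAm} for $\Psi_{UU}$ directly---see the computations \eqref{SwAWm} and \eqref{SwVAm}---which is both shorter and logically cleaner than going through \eqref{Pmm} and \eqref{PDD}. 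As a bonus, the paper then observes that applying \eqref{phh} to \eqref{SwVAm} immediately yields the compatibility of $\Psi_{UH}$ with $\ust$, so no separate computation is needed there.

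A minor labeling point: for $\Psi_{UH}:U\ot H\to H\ot U$ with the algebra structure on $U$ (the first tensor factor), the relevant compatibility is \eqref{AWm}, not \eqref{VAm} as you wrote.
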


\begin{proof} 
The associativity of the product $\ust$ 
is equivalent to coassociativity of $\Delta_1:= \Psi_{HH}\circ\Delta$ which was proven in Proposition \ref{lemmD}. 
Moreover, by \eqref{ust} and 
the second identity in \eqref{PVH}, 
\begin{align*}
f\ust \varepsilon(a)= f\circ (\varepsilon \ot \id) \circ \Psi_{HH}\circ \Delta(a) 
=  f\circ (\id \ot \varepsilon) \circ \Delta(a) =f(a) 
\end{align*} 
and similarly $\varepsilon\ust f(a)=f(a) $ for all $f\in H'$ and $a\in H$. 
Therefore $\varepsilon$ yields the unit element in $H'$ with respect to the product $\ust$.

To show the compatibilty of the multiplication with the braiding, 
we have to verify Equations \eqref{AWm}--\eqref{V1} for $\Psi_{UU}$ 
but only  \eqref{AWm} and \eqref{1W} for $\Psi_{UH}$. 
Since $U$  separates the points of $H$ and vice versa, 
we may again prove the required relations 
by evaluating both sides on elements from $U$ and $H$. 
Let $a, b, c\in H$ and $f,g,h\in U$. 
In Sweedler-type notation, the proof of \eqref{AWm} reads as follows: 
\begin{align}\begin{split} \label{SwAWm}
&h^{\br{1}}(b)\hs (f\ust g)^{\br{2}}(a) \oeq{\eqref{fagb},\eqref{ust}} 
h(b^{\br{1}_1})\hs g({a^{\br{2}_1}}_{(2)}{}^{\!\br{1}_2})\hs f({a^{\br{2}_1}}_{(1)}{}^{\!\br{2}_2}) \\
&\oeq{\eqref{bcop}} h(b^{\br{1}_2\br{1}_3})\hs g(a_{(2)}{\!}^{\br{1}_1\br{2}_3}) \hs f(a_{(1)}{\!}^{\br{2}_1\br{2}_2}) 
\oeq{\eqref{fagb},\eqref{ust}} h^{\br{1}_1\br{1}_2}(b) \hs (f^{\br{2}_2}\ust g^{\br{2}_1})(a). 
\end{split}
\end{align} 
Similarly, 
\begin{align}\begin{split} \label{SwVAm}
&(f\ust g)^{\br{1}}(b)\hs h^{\br{2}}(a)  \oeq{\eqref{fagb},\eqref{ust}} 
 g({b^{\br{1}_1}}_{(2)}{}^{\!\br{1}_2})\hs f({b^{\br{1}_1}}_{(1)}{}^{\!\br{2}_2})\hs h(a^{\br{2}_1}) \\
&\oeq{\eqref{YBE},\eqref{PVH} }  
g(b_{(2)}{\!}^{\br{1}_1\br{1}_2}) \hs f(b_{(1)}{\!}^{\br{2}_1\br{1}_3})\hs h(a^{\br{2}_2\br{2}_3})
\oeq{\eqref{fagb},\eqref{ust}}  (f^{\br{1}_1}\ust g^{\br{1}_2})(b)\hs h^{\br{2}_1\br{2}_2}(a),  
\end{split}
\end{align} 
which yields \eqref{VAm} for $\Psi_{UU}$. Moreover, 
\begin{align}\label{Sw1}
f^{\br{1}}(b) \hs \vare^{\br{2}}(a)\oeq{\eqref{fagb}} f(b^{\br{1}})\hs \vare(a^{\br{2}}) 
\oeq{\eqref{PHW}}\vare(a)\hs f(b)
\oeq{\eqref{PVH}} \vare(a^{\br{1}})\hs f(b^{\br{2}})
\oeq{\eqref{fagb}} \vare^{\br{1}}(a)\hs f^{\br{2}}(b)
\end{align} 
shows that $\Psi_{UU}$ fulfills \eqref{1W} and \eqref{V1}. 
This finishes the proof that $\Psi_{UU}$ is compatible with the multiplication of $U$. 

Applying \eqref{phh} to both sides of \eqref{SwVAm} and
the right side of \eqref{Sw1} gives 
\begin{align*}
h(a^{\Br{1}})\hs (f\ust g)^{\Br{2}}(b)
= h(a^{\Br{1}_1\Br{1}_2}) \hs (f^{\Br{2}_2}\ust g^{\Br{2}_1})(b), \quad  
f(b^{\Br{1}})\hs \vare^{\Br{2}}(a) =  f(b) \hs \vare(a), 
\end{align*} 
so that $\Psi_{UH}$ is also compatible with the multiplication on $U$. 

By Lemma \ref{Linv}, 
$\Psi_{HH}^{-1}$ is compatible with the comultiplication on $H$. 
Similar as above,    
\begin{align}
&h(a^{\Br{1}^\circ})\hs (f\ust g)^{\Br{2}^\circ}(b) \oeq{\eqref{cphh},\eqref{ust}} 
h(a^{\pbr{2}})\hs g({b^{\pbr{1}}}_{\!(2)}{}^{\!\br{1}})\hs f({b^{\pbr{1}}}_{\!(1)}{}^{\!\br{2}}) \nonumber \\ 
\begin{split} \nonumber 
&\oeq{\eqref{PVH}} h(a^{\pbr{2}_1\pbr{2}_2})\hs g(b_{(2)}{}^{\!\pbr{1}_2\br{1}})\hs f(b_{(1)}{}^{\!\pbr{1}_1\br{2}})
\oeq{\eqref{YBe}} h(a^{\pbr{2}_1\pbr{2}_2})\hs g(b_{(2)}{}^{\!\br{1}\pbr{1}_1})\hs f(b_{(1)}{}^{\!\br{2}\pbr{1}_2})\\
&\oeq{\eqref{cphh}} h^{\pbr{2}}(a^{\Br{1}^\circ})\hs g^{\Br{2}^\circ}(b_{(2)}{}^{\!\br{1}})\hs f^{\pbr{1}}(b_{(1)}{}^{\!\br{2}})
\oeq{\eqref{cphh}} h(a^{\Br{1}_1^\circ\Br{1}_2^\circ})\hs 
g^{\Br{2}_1^\circ}(b_{(2)}{}^{\!\br{1}})\hs f^{\Br{2}_2^\circ}(b_{(1)}{}^{\!\br{2}})
\end{split}\\ \nonumber 
&\oeq{\eqref{ust}} h(a^{\Br{1}_1^\circ\Br{1}_2^\circ}) \hs (f^{\Br{2}_2^\circ}\ust g^{\Br{2}_1^\circ})(b), 
\end{align} 
hence $\Psi_{UH}^\circ$ satisfies \eqref{AWm}. 
By Lemma \ref{Linv} and the second relation of \eqref{PVH}, 
$$
f(a^{\Br{1}^\circ})\hs \vare^{\Br{2}^\circ}\!(b)= f(a^{\pbr{2}})\hs \vare(b^{\pbr{1}})
=f(a)\hs \vare(b), 
$$
which shows that $\Psi_{UH}^\circ$ satisfies also \eqref{1W}. 
This proves the compatibility of $\Psi_{UH}^\circ$ 
with the multiplication on $U$.  

Is opposite version is shown in the same way. 
\end{proof} 

Recall that, for any unital algebra $H$, the dual space $H'$ 
contains a largest coalgebra $H^\circ$ such that  
$\Delta : H^\circ \to H^\circ \ot H^\circ$, $\Delta(f)(a\ot b):=f(ab)$ for $a,b\in H$, 
and $\varepsilon(f):=f(1)$ (see e.g.\ \cite{KS}). However, the compatibility condition \eqref{Dm} of the dual 
pairing requires to consider a modified coproduct, say $\Delta^{\!\backprime}$, on suitable subspaces of $H'$. 
To state an explicit formula, assume that $H$ is a braided algebra  
and suppose that $U$ is a linear subspace of $H'$ satisfying the assumptions of 
Lemma \ref{lembraided}. Then, 
for $\Delta^{\!\backprime}$ on $U$ and $\Upsilon_{UH}=\Psi_{UH}$, 
\eqref{Dm} is equivalent to 
$$
\ipp{\Delta^{\!\backprime}(f)}{\Psi_{HH}(a \ot b)} = \ip{f}{m(a\ot b)},\quad f\in U, \ \ a,b\in H, 
$$
which leads to 
\[ \label{Dbraid}
\ipp{\Delta^{\!\backprime}(f)}{a \ot b} = \ip{f}{ m\circ \Psi_{HH}^{-1}(a \ot b)}\quad f\in U, \ \ a,b\in H. 
\]
If we replace $\Psi_{UH}$ by $\Psi_{UH}^\circ$ in \eqref{Dm}, 
then $\Psi_{HH}^{-1}$ needs to be replaced by $\Psi_{HH}$ in \eqref{Dbraid}. 

Note that \eqref{Dbraid} determines uniquely $\Delta^{\!\backprime}(f)\in (H\ot H)'$ 
but $\Delta^{\!\backprime}(f)$ defined by the right hand side of \eqref{Dbraid} 
may not belong to $H'\ot H'$. 
The next proposition shows that, similar to the unbraided case, $H'$ contains 
a largest coalgebra such that the coproduct is given as in \eqref{Dbraid} 
and any subcoalgebra $U$ satisfying the assumptions of Lemma \ref{lembraided} 
becomes a braided coalgebra such that the comultiplication is compatible with the braidings 
$\Psi_{UH}$ and $\Psi_{UH}^\circ$. 

\begin{proposition}   \label{lcop}
Let $H$ be a braided unital algebra and consider 
\[   \label{brcop}
\uD : H' \lra (H\ot H)' , \quad \ipp{\uD(f)}{a\ot b}:= \ip{f}{\,m\!\circ\! \Psi_{HH}(a\ot b)}
=f(b^{\br{1}}  a^{\br{2}}), 
\] 
where $f\in H'$, $a, b\in H$. Then  there exists a largest 
coalgebra $H^{\uc}$ in $H'$ such that the coproduct is given by \eqref{brcop} 
and $\vare(f):=f(1)$. 

Suppose that $U\subset H^{\uc}$ is a subcoalgebra separating the points of $H$.  
If $\Psi_{UU}$ defined in \eqref{pbbraid} is bijective, 
then it turns $U$ into a braided coalgebra. 
In case $\Psi_{UH}$ or $\Psi_{UH}^\circ$ satisfies the assumptions of Lemma \ref{lembraided},  
then it defines a braiding that is compatible with the comultiplication on $U$.  
The same remains true for the opposite version with $\Psi_{UH}$ and $\Psi_{UH}^\circ$ 
replaced by $\Psi_{HU}$ and $\Psi_{HU}^\circ$, respectively. 
\end{proposition}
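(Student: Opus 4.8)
The plan is to mirror the structure of Proposition~\ref{lust}, replacing the roles of product and coproduct throughout. The first task is to establish the existence of the largest coalgebra $H^{\uc}\subset H'$ on which the coproduct is given by \eqref{brcop}. I would first observe that $\uD(f)$ as defined lies in $(H\ot H)'$ for every $f\in H'$, and that the coalgebra axioms to be imposed, namely coassociativity $(\uD\ot\id)\circ\uD=(\id\ot\uD)\circ\uD$ and counitality, translate via the pairing into identities about $m\circ\Psi_{HH}$ on three- and one-fold tensor powers of $H$. Concretely, coassociativity of $\uD$ is equivalent to associativity of the twisted product $m_{1}:=m\circ\Psi_{HH}$, which is exactly what Proposition~\ref{lemmD} provides (the computation $m_1\circ(\id\ot m_1)=m_1\circ(m_1\ot\id)$ in the proof there), and counitality follows from \eqref{1W} and \eqref{V1}. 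Then the standard argument (as in \cite{KS} for $H^\circ$) shows that the set of $f\in H'$ for which $\uD(f)$ actually lands in $H'\ot H'$ is closed under sums, and the sum of all subcoalgebras of $H'$ on which the coproduct agrees with \eqref{brcop} is itself such a subcoalgebra; this is $H^{\uc}$. I would remark that $H^{\uc}$ is precisely the coalgebra $H^\circ$ associated to the opposite algebra $H^{(-1,0)}=(H,m\circ\Psi_{HH}^{-1})$ — or rather to $(H,m\circ\Psi_{HH})$ — so one may import the existence statement verbatim from the unbraided theory applied to a twisted algebra.

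Next, for a subcoalgebra $U\subset H^{\uc}$ separating the points of $H$, with $\Psi_{UU}$ bijective, I must show $\Psi_{UU}$ makes $U$ a braided coalgebra, i.e.\ verify \eqref{PHW} and \eqref{PVH} for $\Psi_{UU}$. Since $H$ separates the points of $H'$, these are checked by evaluating both sides on elements of $H\ot H$ (or $H$), exactly as in Proposition~\ref{lust}. Writing $\uD(f)=f_{\langle1\rangle}\ot f_{\langle2\rangle}$ in Sweedler-type notation with $f_{\langle1\rangle}(a)\,f_{\langle2\rangle}(b)=f(b^{\br1}a^{\br2})$, the compatibility \eqref{PHW} for $\Psi_{UU}$ unwinds — using \eqref{fagb}, the braided identity \eqref{bcop} (which combines \eqref{YBE} and \eqref{PHW}), and the algebra compatibilities \eqref{AWm}, \eqref{VAm}, \eqref{Pmm} of $\Psi_{HH}$ on $H$ — into a string of equalities formally dual to the displays \eqref{SwAWm}, \eqref{SwVAm}, \eqref{Sw1} in the proof of Proposition~\ref{lust}. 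I would carry out one of the two relations in full (say the first relation of \eqref{PHW}) and note that the second, together with \eqref{PVH}, is obtained by the same manipulations with the braidings reversed; the counit conditions $(\id\ot\eps)\circ\uD=\id=(\eps\ot\id)\circ\uD$ on $U$ follow from \eqref{1W} and \eqref{V1} as in \eqref{Sw1}.

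For the compatibility of $\uD$ on $U$ with the braiding $\Psi_{UH}$ between $U$ and $H$, I would apply \eqref{phh} (and \eqref{phcph}) to convert the already-established relations for $\Psi_{UU}$ into the corresponding relations for $\Psi_{UH}$, exactly the device used at the end of the proof of Proposition~\ref{lust} where \eqref{phh} is applied to \eqref{SwVAm} and \eqref{Sw1}. For $\Psi_{UH}^\circ$ one repeats the argument with the inverse braiding: by Lemma~\ref{Linv}, $\Psi_{HH}^{-1}$ is again compatible with the multiplication on $H$, and replacing $\br{k}$ by $\pbr{k}$, $\Br{k}$ by $\Br{k}^\circ$ throughout — and invoking \eqref{YBe} and \eqref{cphh} in place of \eqref{YBE} and \eqref{phh} — yields \eqref{PHW} and \eqref{PVH} for $\Psi_{UH}^\circ$, mirroring the final computation in Proposition~\ref{lust}. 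The opposite versions with $\Psi_{HU}$, $\Psi_{HU}^\circ$ are handled by the symmetric argument (flipping tensor factors), as already asserted in Lemma~\ref{lembraided}.

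The main obstacle I anticipate is purely bookkeeping rather than conceptual: keeping the Sweedler indices of the \emph{coproduct} $\uD$ straight when it is simultaneously braided past another factor, since here $\uD$ itself is defined through a braiding ($\uD(f)$ evaluated on $a\ot b$ involves $f(b^{\br1}a^{\br2})$), so every manipulation involves nested braidings whose chronological labels must be tracked with care — this is where \eqref{bcop} and \eqref{YBe}, the precompiled ``braided coassociativity''-type identities, do the heavy lifting. Once the correct analogue of \eqref{SwAWm}--\eqref{Sw1} is written down, the verification is mechanical; the delicate point is getting those analogues right, in particular making sure the twisted coproduct $m\circ\Psi_{HH}$ appearing inside $\uD$ is the one for which \eqref{YBE} and the compatibility conditions \eqref{PHW}, \eqref{PVH} combine correctly, so that everything reduces to properties already proven in Propositions~\ref{lemmD} and~\ref{lust} and Lemmas~\ref{lembraided} and~\ref{Linv}.
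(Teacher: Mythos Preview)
Your proposal is correct and follows essentially the same route as the paper: reduce the existence of $H^{\uc}$ to the classical $H^\circ$ construction applied to the twisted algebra $(H,m_1)$ via Proposition~\ref{lemmD}, then verify \eqref{PHW} and \eqref{PVH} for $\Psi_{UU}$ by evaluation on $H$ (the paper does the first relation of \eqref{PVH} explicitly in display \eqref{prPVH} using \eqref{brcop}, \eqref{phh}, \eqref{VAm}, \eqref{YBE}), pass to $\Psi_{UH}$ through \eqref{phcph}, and handle $\Psi_{UH}^\circ$ by invoking Lemma~\ref{Linv} and \eqref{YBe}. One small correction: the identity \eqref{bcop} you cite combines \eqref{YBE} with the \emph{coalgebra} compatibility \eqref{PHW} on $H$, which is not available here since $H$ is only assumed to be a braided algebra; the relevant inputs are the algebra compatibilities \eqref{AWm} and \eqref{VAm} (which you also list), and these are indeed what the paper's computation \eqref{prPVH} uses.
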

\begin{proof} 
By Proposition \ref{lemmD}, $m_1:=m\!\circ\! \Psi_{HH}$ defines an associative multiplication on $H$ with 
unit element $1\in H$.  Therefore the existence of a largest coalgebra follows from the known result 
of the unbraided case, i.e., there exists a largest coalgebra, say $H^{\uc}$, 
in $H'$ with the coproduct given by \eqref{brcop} 
and $\vare(f):=f(1)$, 
see e.g.\ \cite[Section 1.2.8]{KS}. 

Assume that the subcoalgebra $U\subset H^{\uc}$ satisfies the assumptions of the proposition 
which guarantee that $\Psi_{UU}$ and $\Psi_{UH}$ are well-defined. 
Then, for all $f,g\in U$ and $a,b,c\in H$, we have 
\begin{align}  \nonumber 
& g^{\br{1}}{\!}_{(1)}(c) \hs g^{\br{1}}{\!}_{(2)}(b)\hs f^{\br{2}}\hsp (a)  
\oeq{\eqref{brcop}}  g^{\br{1}_2}(c^{\br{1}_1} b^{\hsp \br{2}_1}) \hs f^{\br{2}_2}\hsp (a)
\oeq{\eqref{phh}}    g((c^{\br{1}_1} b^{\br{2}_1})^{\br{1}_2})\hs f(a^{\br{2}_2}) \\ \label{prPVH} 
 &\oeq{\eqref{VAm}} g(c^{\br{1}_1\br{1}_2} b^{\br{2}_1\br{1}_3}) \hs  f(a^{\br{2}_2\br{2}_3}) 
 \oeq{\eqref{YBE}} g(c^{\br{1}_2\br{1}_3} b^{\br{1}_1\br{2}_3})\hs f(a^{\br{2}_1\br{2}_2})  \\ \nonumber 
 &\oeq{\eqref{brcop}} g_{(1)}(c^{\br{1}_2})\hs g_{(2)}(b^{\br{1}_1}) \hs  f(a^{\br{2}_1\br{2}_2}) 
 \oeq{\eqref{phh}}   g_{(1)}{\!}^{\br{1}_1} (c)\, g_{(2)}{\!}^{\br{1}_2}(b) \hs f^{\br{2}_1\br{2}_2}(a)  
\end{align} 
which proves that $\Psi_{UU}$ satisfies the first relation of \eqref{PVH}. 
The second relation of  \eqref{PVH} follows from
$$
\vare(g^{\br{1}})\hs  f^{\br{2}}(a) = g^{\br{1}}(1) \hs f^{\br{2}}(a) 
\oeq{\eqref{phh}}   g(1^{\br{1}})  \hs f(a^{\br{2}})
\oeq{\eqref{V1}}   g(1) \hs f(a)  = \vare(g) \hs f(a). 
$$
In exactly the same way, one shows that $\Psi_{UU}$ satisfies \eqref{PHW}, 
hence $U$ is a braided coalgebra with respect to braiding $\Psi_{UU}$.  

Applying $\uD$ to \eqref{phcph}, evaluating on $b\ot c \in H\ot H$ 
and using \eqref{prPVH} gives 
\begin{align*} 
f(a^{\Br{1}}) \hs  g^{\Br{2}}{\!}_{(1)}(c) \hs  g^{\Br{2}}{\!}_{(2)}(b)
&\oeq{\eqref{phcph}} f^{\br{2}}\hsp (a) \hs g^{\br{1}}{\!}_{(1)}(c) \hs g^{\br{1}}{\!}_{(2)}(b)
\oeq{\eqref{prPVH}} f^{\br{2}_1\br{2}_2}(a) \hs g_{(1)}{\!}^{\br{1}_1} (c)\, g_{(2)}{\!}^{\br{1}_2}(b) \\
&\oeq{\eqref{phh}} f(a^{\Br{1}_1\Br{1}_2}) \hs  g_{(1)}{\!}^{\Br{2}_2} (c)\hs g_{(2)}{\!}^{\Br{2}_1}(b). 
\end{align*} 
Moreover, 
$$
f\big(a^{\Br{1}}\hs\vare(g^{\Br{2}})\big) = g^{\Br{2}}\hsp(1) \hs  f(a^{\Br{1}})
\oeq{\eqref{phh}} g(1^{\br{1}}) \hs f(a^{\br{2}}) \oeq{\eqref{V1}}  g(1) \hs f(a) 
=  f\big(\vare(g)\hs a\big). 
$$
From the last two computations, we conclude that $\Psi_{UH}$ satisfies \eqref{PHW}, i.e., 
the braiding $\Psi_{UH}$ is compatible with the comultiplication on $U$. 

Much in the same way, by applying Lemma \ref{Linv} to $\Psi_{HH}$, 
\begin{align*} 
&f(a^{\cBr{1}}) \hs  g^{\cBr{2}}{\!\!}_{(1)}(c) \hs  g^{\cBr{2}}{\!\!}_{(2)}(b)
\oeq{\eqref{ophcph}} f^{\pbr{2}}\hsp (a) \hs g^{\pbr{1}}{\!}_{(1)}(c) \, g^{\pbr{1}}{\!}_{(2)}(b) 
\oeq{\eqref{brcop}}   f^{\pbr{2}}\hsp (a) \hs g^{\pbr{1}}(c^{\br{1}} b^{\br{2}}) \\
&\oeq{\eqref{cphh}}  f(a^{\pbr{2}_2}) \hs  g((c^{\br{1}_1} b^{\br{2}_1})^{\pbr{1}_2})
 \oeq{\eqref{VAm}}  f(a^{\pbr{2}_1\pbr{2}_2}) \hs g(c^{\br{1}\pbr{1}_1} b^{\br{2}\pbr{1}_2})  \\
 &\oeq{\eqref{YBe}} f(a^{\pbr{2}_1\pbr{2}_2})  \hs g(c^{\pbr{1}_2\br{1}} b^{\pbr{1}_1\br{2}}) 
 \oeq{\eqref{brcop}}  f(a^{\pbr{2}_1\pbr{2}_2})  \hs g_{(1)}(c^{\pbr{1}_2})\hs g_{(2)}(b^{\pbr{1}_1})\\
 & \oeq{\eqref{cphh}} f^{\pbr{2}_2}(a^{\pbr{2}_1})  \hs g_{(1)}{\!}^{\pbr{1}_2}(c)\hs g_{(2)}(b^{\pbr{1}_1}) 
 \oeq{\eqref{cphh}}  f(a^{\cBr{1}_1\cBr{1}_2}) \hs g_{(1)}{\!}^{\cBr{2}_2}(c)\hs g_{(2)}{\!}^{\cBr{2}_1}(b), 
\end{align*} 
and 
$f\big(a^{\Br{1}}\hs\vare(g^{\cBr{2}})\big) = g^{\cBr{2}}\hsp(1) \hs  f(a^{\cBr{1}})
\oeq{\eqref{cphh}} g(1^{\pbr{1}}) \hs f(a^{\pbr{2}}) \oeq{\eqref{1W}}  g(1) \hs f(a) =  f\big(\vare(g)\hs a\big)$. 
Therefore, if the braiding $\Psi_{UH}^\circ$ exists, it is compatible 
with the comultiplication on $U$.  

The opposite version is proven analogously. 
\end{proof} 

Given a  braided bialgebra $H$, Lemma \ref{lembraided}, Proposition \ref{lust} and Proposition \ref{lcop} 
tell us how to define
braidings, a product and a coproduct, respectively, on appropriate subspaces of $H'$.    
The next theorem shows that 
these structures fit well together, i.e., they can be used to 
obtain a braided bialgebra  $U\subset H'$ such that the canonical pairing yields 
a dual pairing between braided bialgebras. 
It is noteworthy to mention that the braiding $\Upsilon_{UH}$ 
in Definition \ref{dualp} will not be implemented by $\Psi_{UH}$, but by $\Psi_{UH}^\circ$. 

\begin{theorem}  \label{dH} 
Let $H$ be a braided bialgebra and 
consider the product $\um :  H' \ot H' \to H'$ defined by 
\[   \label{us}
\ip{\um(f\ot g)}{a} := \ipp{f\ot g}{\Psi_{HH}^{-1}\! \circ\! \Delta(a)}
=f({a_{(1)}}^{\!\pbr{2}})\hs g({a_{(2)}}^{\!\pbr{1}}), \quad a\in H, \ f,g\in H'.  
\]
Assume that  $U \subset H'$ is a unital subalgebra which is also a 
subcoalgebra of  \hs$H^{\uc}$\, for the coproduct $\uD$ defined in Proposition \ref{lcop}. 

Suppose that $U$ satisfies the left-handed version of Lemma \ref{lembraided} 
such that $\Psi_{UU}$ and $\Psi_{UH}^\circ$ define braidings.  Then $U$ is a 
braided bialgebra with respect to the braiding $\Psi_{UU}$, and the 
canonical pairing $\ip{\cdot\,}{\cdot} : U\ot H \to \K$ \,defines 
a pairing between braided bialgebras 
with respect to the braiding $\Psi_{UH}^\circ$ such that $U$ becomes a 
left dual of $H$.  

In case $U$ satisfies the right-handed version of Lemma \ref{lembraided} 
such that $\Psi_{UU}$ and $\Psi_{HU}^\circ$ yield braidings, 
the canonical pairing $\ip{\cdot\,}{\cdot} : H\ot U \to \K$ \,defines 
a pairing between braided bialgebras 
with respect to the braiding $\Psi_{HU}^\circ$ and $U$ becomes a 
right dual of $H$.  

If $H$ is a braided Hopf algebra and $f\circ S\in U$ for all $f\in U$, then 
$U$ is a braided Hopf algebra with antipode $S(f):=f\circ S$.  
\end{theorem}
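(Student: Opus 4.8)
The plan is to verify, in order, the four assertions bundled into Theorem~\ref{dH}: that $U$ is a braided bialgebra under $\Psi_{UU}$; that the canonical pairing satisfies the conditions of Definition~\ref{dualp} with $\Upsilon_{UH}=\Psi_{UH}^\circ$; that non-degeneracy upgrades this to a left dual; and finally the Hopf-algebra amendment. For the first assertion, Proposition~\ref{lust} (applied to the braided coalgebra underlying $H$, but with $\Psi_{HH}$ replaced by $\Psi_{HH}^{-1}$, which is legitimate by Lemma~\ref{Linv}) already gives that $\um$ of \eqref{us} is an associative unital product on $H'$ compatible with $\Psi_{UU}$; note $\um = \ust$ built from $\Delta_{-1}=\Psi_{HH}^{-1}\circ\Delta$ in place of $\Delta_1$. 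Likewise Proposition~\ref{lcop} gives that $\uD$ makes $U$ a braided coalgebra with $\Psi_{UU}$ compatible with $\uD$. So the genuinely new content of the first assertion is the bialgebra compatibility \eqref{Dcm} relating $\um$ and $\uD$ on $U$.

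First I would establish \eqref{Dcm} for $U$ by the usual evaluation argument: since $H$ separates the points of $H'$ (and $U\subset H'$), it suffices to check that $\ip{\uD(\um(f\ot g))}{a\ot b}$ equals $\ip{(\um\ot\um)\circ(\id\ot\Psi_{UU}\ot\id)\circ(\uD\ot\uD)(f\ot g)}{a\ot b}$ for all $a,b\in H$ and $f,g\in U$. Unwinding the left side through \eqref{brcop} and \eqref{us} turns it into a pairing of $f\ot g$ against an expression built from $m$, $\Delta$, and powers of $\Psi_{HH}$ applied to $a\ot b$; unwinding the right side similarly, and using the definition \eqref{phh}--\eqref{cphh} of the induced braidings together with the Sweedler identities \eqref{bcop}, \eqref{YBe}, \eqref{brpbr}, one reduces both sides to the \emph{same} expression in $H$, the key input being the bialgebra axiom \eqref{Dcm} for $H$ itself (in the form \eqref{PDD}/\eqref{Pmm} as needed) and the Yang--Baxter equation \eqref{YBE}. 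This is the main obstacle: it is the one place where all of $m$, $\Delta$, $\Psi_{HH}$ and their compatibilities have to be orchestrated simultaneously, and bookkeeping the braiding indices is delicate — one must be careful that the braiding appearing between the two tensor legs on the $U$-side is $\Psi_{UU}$ and not its inverse or $\Psi_{UH}^\circ$, which is exactly why \eqref{us} uses $\Psi_{HH}^{-1}$ rather than $\Psi_{HH}$.

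Next I would check the pairing axioms of Definition~\ref{dualp} with $\Upsilon_{UH}=\Psi_{UH}^\circ$. Axiom \eqref{mD}, $\ip{\um(f\ot g)}{a}=\ipp{f\ot g}{\cdots}$, holds essentially by the very definition \eqref{us} of $\um$ once one recognizes $\Psi_{HH}^{-1}\circ\Delta$ as the $\Upsilon_{UH}^{-1}=\Psi_{HH'}^\circ$-flavoured coproduct; more precisely one rewrites \eqref{us} using \eqref{ophcph} to introduce $\Psi_{UH}^\circ$ explicitly. Axiom \eqref{Dm}, relating $\uD$ to $m$, is precisely the content of the discussion around \eqref{Dbraid}--\eqref{brcop}: with $\Psi_{UH}$ replaced by $\Psi_{UH}^\circ$ the formula \eqref{Dbraid} has $\Psi_{HH}^{-1}$ replaced by $\Psi_{HH}$, and that is exactly \eqref{brcop}, so \eqref{Dm} holds by construction of $H^{\uc}$. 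Axiom \eqref{1a} is immediate: $\ip{1_U}{a}=\vare_U$-as-unit evaluated, i.e.\ $\ip{\vare_H}{a}=\vare_H(a)$ by Proposition~\ref{lust}, and $\ip{f}{1_H}=f(1_H)=\vare_U(f)$ by the counit formula of Proposition~\ref{lcop}. One also needs the standing hypothesis of Definition~\ref{dualp} that $\Psi_{UH}^\circ$ is compatible with both multiplications and both comultiplications: compatibility with $m_H$ and $\Delta_H$ is furnished by Lemma~\ref{lembraided}, and compatibility with $\um$ and $\uD$ on $U$ is furnished by the last clauses of Propositions~\ref{lust} and~\ref{lcop} respectively. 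Finally, $U$ separates the points of $H$ by hypothesis (it is assumed to do so as a subcoalgebra of $H^{\uc}$, and separation is one of the Lemma~\ref{lembraided} assumptions), and $H$ separates the points of $U$ since $U\subset H'$; hence the pairing is non-degenerate and $U$ is a left dual. The right-handed case is verbatim with $\Psi_{HU}^\circ$ in place of $\Psi_{UH}^\circ$, using the ``opposite version'' clauses throughout.

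For the Hopf case, assume $f\circ S\in U$ for all $f\in U$ and set $S_U(f):=f\circ S$. I would show $S_U$ is the convolution inverse of $\id_U$ for $\um$, i.e.\ $\um\circ(S_U\ot\id)\circ\uD = \vare_U\,1_U = \um\circ(\id\ot S_U)\circ\uD$. Pairing against $a\in H$ and unwinding via \eqref{brcop}-for-$\uD$ and \eqref{us}-for-$\um$, the left side becomes $f$ evaluated on an expression in $a$ involving $m$, $\Delta$, $S$ and $\Psi_{HH}$; using the antipode relations \eqref{Sbraid} for $H$ (in particular $m\circ\Psi_{HH}\circ(S\ot S)=S\circ m$ and the $S$-braiding commutations) together with the antipode axiom $m\circ(S\ot\id)\circ\Delta=1\eps=m\circ(\id\ot S)\circ\Delta$ for $H$, this collapses to $f(1_H)\vare_H(a) = (\vare_U\,1_U)(a)$; the last equality of \eqref{Sbraid}, $\eps\circ S=\eps$, handles the counit bookkeeping. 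One should also check the extra requirement of Definition~\ref{dualp} for Hopf pairings, $\ip{\cdot}{\cdot}\circ(S_U\ot\id)=\ip{\cdot}{\cdot}\circ(\id\ot S_H)$, which reads $(f\circ S)(a)=f(S(a))$ — true tautologically. I expect this last part to be routine once the bialgebra compatibility \eqref{Dcm} for $U$ is in hand.
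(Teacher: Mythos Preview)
Your proposal is correct and follows the same overall strategy as the paper: reduce everything to evaluation against elements of $H$, cite Propositions~\ref{lust} and~\ref{lcop} (via Lemma~\ref{Linv}) for the separate braided algebra and coalgebra structures, and then verify \eqref{Dcm}, the pairing axioms, and the antipode by direct computation.

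The one difference worth flagging is how the paper handles the two computations you identify as delicate, namely \eqref{Dcm} for $U$ and the antipode check. Rather than tracking braiding indices through \eqref{PDD}/\eqref{Pmm} and \eqref{Sbraid} directly, the paper invokes Proposition~\ref{lemmD}: $H^{(1,-1)}$, with product $m_1=m\circ\Psi_{HH}$ and coproduct $\Delta_{-1}=\Psi_{HH}^{-1}\circ\Delta$, is itself a braided bialgebra (and braided Hopf algebra with the same antipode $S$). By construction $\um$ is dual to $\Delta_{-1}$ via \eqref{us}, $\uD$ is dual to $m_1$ via \eqref{brcop}, and $\Psi_{UU}$ is dual to $\Psi_{HH}$ via \eqref{UU}/\eqref{pbbraid}; so
\[
\ipp{\uD\circ\um(f\ot g)}{a\ot b}=\ipp{f\ot g}{\Delta_{-1}\circ m_1(a\ot b)}
\]
and the bialgebra relation \eqref{Dcm} for $H^{(1,-1)}$ transfers to $U$ in five lines with no index chasing. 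The antipode is handled the same way: $\ip{\um\circ(\id\ot S_U)\circ\uD(f)}{a}=\ip{f}{m_1\circ(S\ot\id)\circ\Delta_{-1}(a)}=\vare(a)f(1)$. Your direct route via \eqref{PDD}/\eqref{Pmm}/\eqref{Sbraid} would of course also work---it is essentially reproving the relevant case of Proposition~\ref{lemmD} inline---but recognising that the dual structures on $U$ correspond exactly to $H^{(1,-1)}$ packages the ``delicate bookkeeping'' you anticipate into a single citation.
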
  
\begin{proof}
By Lemma \ref{Linv}, $\Psi_{HH}^{-1}$ defines a braiding on $H$ which is compatible with the 
comultiplication on $H$. Therefore, by Proposition \ref{lust}, the product \eqref{us} is well-defined. 
Note that the braiding in \eqref{us} is the inverse of that in \eqref{ust}. 
Combining Proposition \ref{lust} with Lemma \ref{Linv} shows that the multiplication on $U$ 
is compatible with $\Psi_{UU}$ so that $U$ becomes a braided algebra. 
From Proposition \ref{lcop}, it follows directly that $U$ is a braided coalgebra 
with respect to $\Psi_{UU}$. Therefore, to complete the proof that $U$ yields a braided bialgebra, 
it suffices to prove~\eqref{Dcm}.  

As in the previous proofs, we will verify \eqref{Dcm} 
by evaluating both sides on $a\ot b\in H\ot H$.  
It was shown in Proposition \ref{lemmD} that $H^{(1,-1)}$ is a braided bialgebra 
with respect to $\Psi_{HH}$. 
Therefore, for all  $f\ot g\in U\ot U$, 
\begin{align*} 
&\ipp{\uD \circ \um(f\ot g) }{a\ot b}
\oeq{\eqref{brcop},\eqref{us}} \ipp{f\ot g }{\Delta_{\hs -1} \circ m_1(a\ot b)} \\
&\quad\oeq{\eqref{Dcm}} 
\ipp{f\ot g }{(m_1\ot m_1) \circ (\id \ot \Psi_{HH} \ot \id) \circ (\Delta_{\hs -1}\ot \Delta_{\hs -1})(a\ot b)} \\
&\quad\oeq{\eqref{brcop}} 
\ipp{ (\uD \ot \uD)(f\ot g)\,}{(\id \ot \Psi_{HH} \ot \id) \circ (\Delta_{\hs -1}\ot \Delta_{\hs -1})(a\ot b)}\\
&\hspace{4.5pt}\oeq{\eqref{UU},\eqref{pbbraid}} 
\ipp{ (\id \ot \Psi_{UU} \ot \id) \circ(\uD \ot \uD)(f\ot g)\,}{\Delta_{\hs -1}\ot \Delta_{\hs -1}(a\ot b)}  \\ 
&\quad\oeq{\eqref{us}} 
\ipp{ (\um \ot\um)\circ (\id \ot \Psi_{UU} \ot \id) \circ(\uD \ot \uD)(f\ot g)} {a\ot b}, 
\end{align*}  
which implies that the product and the coproduct on $U$ satisfy the compatibility 
condition \eqref{Dcm} with respect to the 
braiding $\Psi_{UU}$. Thus we have proven that $U$ is a braided bialgebra. 

Our next aim is to show that the cannonical pairing defines 
a pairing between braided bialgebras with respect to the braiding $\Psi_{UH}^\circ$. 
From Lemma \ref{lembraided}, Proposition  \ref{lust} and 
Proposition~\ref{lcop}, we conclude that $\Psi_{UH}^\circ$ is compatible 
with the multilplications and the comultiplications on $U$ and $H$. 
By Definitionen \ref{dualp}, it remains to prove 
Equations \eqref{mD}--\eqref{1a}. 

Equation \eqref{1a} is trivially satisfied by the stated unit element in Proposition \ref{lust}
and the definition of the counit in Proposition \ref{lcop}. 
Let $f,g\in U$ and $a,b\in H$. Equation \eqref{mD} follows from 
$$
\um(f\ot g)(a) \oeq{\eqref{us}} g({a_{(2)}}^{\pbr{1}})\hs f({a_{(1)}}^{\pbr{2}}) 
  \oeq{\eqref{cphh}}  f({a_{(1)}}^{\cBr{1}}) \hs g^{\cBr{2}\!}(a_{(2)}), 
$$
and  
$$
f(ab) \oeq{\eqref{brpbr}} f(a^{\pbr{2}\br{1}}\hs b^{\pbr{1}\br{2}}) 
\oeq{\eqref{brcop}}  f_{(1)}( a^{\pbr{2}}) \hs f_{(2)}(b^{\pbr{1}})
 \oeq{\eqref{cphh}}  f_{(1)}( a^{\cBr{1}}) \hs f_{(2)}{\!}^{\cBr{2}\!\hsp}(b)
$$
implies \eqref{Dm}. 

Now assume that $H$ is a braided Hopf algebra with antipode $S$ and that $f\circ S\in U$ for all 
$f\in U$. With the definition $S(f):=f\circ S$, we compute for all $a\in H$  
\begin{align*} 
\ip{\um\circ (\id\hsp\ot\hsp S)\circ \uD (f)\hs}{a}  
&\oeq{\eqref{us}} 
\ipp{(\id\hsp\ot\hsp S)\circ \uD (f)\hs}{\Delta_{-1}(a)} \oeq{\eqref{braidip}} 
\ipp{ \uD (f)}{(S\hsp\ot\hsp \id)\circ\Delta_{-1}(a)} \\
& \oeq{\eqref{brcop}}  \ip{f\hs}{m_1\circ (S\hsp\ot\hsp\id)\circ\Delta_{-1}(a)}
= \vare(a) \hs \ip{f}{1} = \ip{ \vare(f)\hs 1}{a}, 
\end{align*} 
where we used the fact from Proposition \ref{lemmD} that 
$H^{(1,-1)}$ is a braided Hopf algebra 
with antipode $S$. 
This yields $\um\circ (\id\hsp\ot\hsp S)\circ \uD (f)= \vare(f) 1$. 
Interchanging the positions of $S$ and $\id$ in above calculations 
shows that also $\um\circ (S\hsp\ot\hsp \id)\circ \uD (f)= \vare(f) 1$, 
therefore the linear mapping $S: U\rightarrow U$ defined above turns $U$ into a braided Hopf algebra.  

Since $f^{\cBr{1}}\!(a)\,g(b^{\cBr{2}}) = f(a^{\pbr{2}})\,g(b^{\pbr{1}}) 
= f(a^{\cBr{1}})\,g^{\cBr{2}}\!(b)$ by \eqref{cphh}, and 
$\ip{\cdot}{\cdot}$ is symmetric in the sense that 
$\ip{a}{f} = f(a) = \ip{f}{a}$, the proof of the opposite version is essentially the same. 
\end{proof} 

Note that the same bialgebra $U$ serves as a left and as a right dual of $H$ by considering 
either the braiding  $\Psi_{UH}^\circ$ or the braiding $\Psi_{HU}^\circ$.   
From now on, we restrict ourselves mainly to the left version, 
the right version differs essentially only in notation. 

Our definitions of the product and the coproduct on $U\subset H'$ 
deviate from those given in \cite{Maj94a} for rigid (="finite") braided monoidal categories. 
By evaluating elements of $B^*\ (\sim U)$ on elements of $B\ (\sim H)$ in \cite{Maj94a}, 
it can be seen that the main difference boils down to omitting 
the braiding $\Upsilon_{UH}$ in Definition \ref{dualp} and using the $\ipp{\cdot}{\cdot}$ instead. 
For a finite dimensional braided bialgebra, 
the dual braided bialgebra in the sense of \cite[Proposition~2.4]{Maj94a} 
is a special case of our construction and corresponds to $U^{(1,-1)}$. 
We present this result in the following corollary, 
its proof is straightforward by applying the corresponding definitions. 

\begin{corollary}
Let $H$ be a braided bialgebra and assume that  $U\subset H'$ fulfills the assumptions 
of Theorem \ref{dH}. 
Then the product $m_{1}$ and the coproduct $\Delta_{-1}$ of \,$U^{(1,-1)}$ satisfy 
$$
\ip{m_1(f\ot g)}{a} = \ipp{f\ot g}{\Delta(a)} , \qquad \ipp{\Delta_{-1}(f)}{a\ot b} = \ip{f}{m(a\ot b)}
$$
for all $f,g\in U$ and $a,b\in H$.
\end{corollary}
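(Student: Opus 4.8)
The plan is to verify the two displayed identities directly by unwinding the definitions of $m_1$ and $\Delta_{-1}$ on $U^{(1,-1)}$ together with the formulas \eqref{us} and \eqref{brcop} for the product $\um$ and coproduct $\uD$ on $U\subset H'$, and then cancelling the braiding against its inverse via \eqref{brpbr}. Recall from Proposition \ref{lemmD} that on $U$ (with braiding $\Psi_{UU}$) we have $m_1 = \um\circ\Psi_{UU}$ and $\Delta_{-1} = \Psi_{UU}^{-1}\circ\uD$, while on $H$ (with braiding $\Psi_{HH}$) the product $m_1^H = m\circ\Psi_{HH}$ and coproduct $\Delta_{-1}^H = \Psi_{HH}^{-1}\circ\Delta$ were the ones appearing implicitly in \eqref{us} and \eqref{brcop}: indeed \eqref{us} says $\ip{\um(f\ot g)}{a} = \ipp{f\ot g}{\Psi_{HH}^{-1}\circ\Delta(a)}$ and \eqref{brcop} says $\ipp{\uD(f)}{a\ot b} = \ip{f}{m\circ\Psi_{HH}(a\ot b)}$.

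For the first identity, I would start from $\ip{m_1(f\ot g)}{a} = \ip{\um(\Psi_{UU}(f\ot g))}{a}$, apply \eqref{us} to get $\ipp{\Psi_{UU}(f\ot g)}{\Psi_{HH}^{-1}\circ\Delta(a)}$, and then use the defining relation \eqref{UU}/\eqref{pbbraid} of $\Psi_{UU}$ as the adjoint of $\Psi_{HH}$ with respect to $\ipp{\cdot}{\cdot}$: moving $\Psi_{UU}$ across the pairing turns it into $\Psi_{HH}$ acting on the $H$-side, which then cancels the $\Psi_{HH}^{-1}$ by \eqref{brpbr}, leaving $\ipp{f\ot g}{\Delta(a)}$. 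Concretely in Sweedler notation, $\ip{m_1(f\ot g)}{a} = f^{\br2}({a_{(1)}}^{\pbr2})\,g^{\br1}({a_{(2)}}^{\pbr1})$, and by \eqref{fagb} this equals $f(({a_{(1)}}^{\pbr2})^{\br2})\,g(({a_{(2)}}^{\pbr1})^{\br1}) = f(a_{(1)})\,g(a_{(2)})$ using \eqref{brpbr}, which is exactly $\ipp{f\ot g}{\Delta(a)}$.

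For the second identity, I would run the same argument in the dual direction: $\ipp{\Delta_{-1}(f)}{a\ot b} = \ipp{\Psi_{UU}^{-1}\circ\uD(f)}{a\ot b}$, move $\Psi_{UU}^{-1}$ across the pairing $\ipp{\cdot}{\cdot}$ so that it becomes $\Psi_{HH}^{-1}$ on the $H$-side (again by the adjointness built into \eqref{UU}, now applied to the inverse braiding), obtaining $\ipp{\uD(f)}{\Psi_{HH}^{-1}(a\ot b)}$, and then invoke \eqref{brcop} to get $\ip{f}{m\circ\Psi_{HH}\circ\Psi_{HH}^{-1}(a\ot b)} = \ip{f}{m(a\ot b)}$. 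In Sweedler notation this reads $\ipp{\Delta_{-1}(f)}{a\ot b} = f_{(1)}(a^{\pbr2\,\br1})\,f_{(2)}(b^{\pbr1\,\br2})$ via \eqref{brcop} applied to $\uD$, which collapses to $f_{(1)}(a)\,f_{(2)}(b) = f(ab)$ by \eqref{brpbr} and the definition of $\uD$.

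The only mild subtlety — and the step I would be most careful about — is the bookkeeping of which braiding is adjoint to which: one must check that $\Psi_{UU}^{\pm1}$ really is the transpose of $\Psi_{HH}^{\pm1}$ under $\ipp{\cdot}{\cdot}$ in the precise sense needed, i.e. that the well-definedness hypotheses of Theorem \ref{dH} (so that $\Psi_{UU}$ maps $U\ot U$ into $U\ot U$ and is bijective) are exactly what lets one slide the braiding from the $U$-side to the $H$-side without leaving the relevant tensor subspaces. Once that is in place, both identities are immediate cancellations of a braiding against its inverse, so the corollary follows by applying the corresponding definitions as claimed; everything else is routine and I would present it in the same annotated Sweedler style used throughout Section~\ref{sec4}.
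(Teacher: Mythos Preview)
Your approach is correct and is exactly what the paper means by ``straightforward by applying the corresponding definitions'': the adjointness $\ipp{\Psi_{UU}(\,\cdot\,)}{\,\cdot\,} = \ipp{\,\cdot\,}{\Psi_{HH}(\,\cdot\,)}$ built into \eqref{UU}/\eqref{pbbraid} lets you cancel a braiding against its inverse in both identities, and the paper carries out precisely this computation in the proof of Proposition~\ref{Pdual}. Your explicit Sweedler lines contain a couple of harmless index swaps (for instance $\um(g^{\br{1}}\ot f^{\br{2}})(a)=g^{\br{1}}(a_{(1)}{}^{\pbr{2}})\,f^{\br{2}}(a_{(2)}{}^{\pbr{1}})$, and $\ipp{f\ot g}{\Delta(a)}=g(a_{(1)})\,f(a_{(2)})$ in the paper's convention \eqref{braidip}), but these do not affect the argument.
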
 

Note that, for a finite-dimensional braided bialgebra $H$, 
the non-degeneracy condition implies $U=H'$, and $H'$ satisfies 
automatically the assumptions on $U$ in above theorem. 
In this case, the left dual $U=H'$ is unique. For an infinite-dimensional braided bialgebra, 
this need not to be the case, see e.g. \cite[Section 11.2.3]{KS} 
with all braidings given by the usual flip. 

Now let  $H$ be a possibly infinite-dimensional braided bialgebra 
and $U\subset H'$ a left dual of $H$. 
If $H\subset U'$ satisfies the conditions in Lemma \ref{lembraided},  
then we can apply Theorem \ref{dH} 
to construct a multiplication $\um$ and a coproduct $\uD$ on $H$ 
such that $H$ becomes a left dual of $U$. 
The next proposition shows that this left dual is isomorphic to the 
braided bialgebra $H$. In this sense the construction is reflexive, i.e., 
taking twice the left dual yields 
the same braided bialgebra. 

\begin{proposition} \label{propdual}
Let $H$ be a braided bialgebra and let $U\subset H'$ be a left dual of $H$ 
with braiding $\Psi_{UU}$ satisying \eqref{pbbraid}. 
Assume that the map $\Psi_{HU}^\circ$ defined in Lemma \ref{lembraided} 
is a bijection.  
Consider the canonical embedding $\iota: H \rightarrow \iota(H)\subset U'$ given by 
$\iota(a)(f):= f(a)$.  
Then $\iota(H)$ with the multiplication $\um$ 
and the comultiplication $\uD$ \,from Theorem \ref{dH} 
becomes a left dual of \,$U$, and $\iota : H \rightarrow \iota(H)$ yields an isomorphism 
of braided bialgebras. 

Analogously, if \,$U\subset H'$ is a right dual of $H$ and $\Psi_{UH}^\circ$ 
in \eqref{ipbb} is a bijection, then $\iota(H)\cong H$ becomes a right dual of \,$U$. 

If $H$ is a braided Hopf algebra, then $\iota$ yields an isomorphism of braided 
Hopf algebras. 
\end{proposition}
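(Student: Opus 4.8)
The plan is to verify directly that the canonical embedding $\iota$ intertwines all the structures involved. Since $U \subset H'$ is a left dual of $H$, the canonical pairing $\ip{\cdot}{\cdot} : U \ot H \to \K$ is non-degenerate, so $\iota : H \to \iota(H) \subset U'$ is injective, and $\iota(H)$ separates the points of $U$ by construction. The hypothesis that $\Psi^\circ_{HU}$ from Lemma \ref{lembraided} is bijective is exactly the hypothesis needed to apply the right-handed version of Theorem \ref{dH} with the roles of $H$ and $U$ exchanged: it gives a product $\um$, a coproduct $\uD$ and (if applicable) an antipode on $\iota(H) \subset U'$ making $\iota(H)$ a left dual of $U$ with respect to the braiding $(\Psi^\circ_{HU})$-type map on $\iota(H) \ot U$. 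So the content of the proposition is that these induced structures, transported back along $\iota$, coincide with the original $m$, $\Delta$, $\eps$ (and $S$) of $H$.

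First I would unwind the definitions. By \eqref{us} applied to the dual pairing between $\iota(H)$ and $U$, the product $\um$ on $\iota(H)$ is characterized by $\ip{\um(\iota(a)\ot\iota(b))}{f} = \ipp{\iota(a)\ot\iota(b)}{\Psi_{UU}^{-1}\circ\uD(f)}$ for $f\in U$, where $\uD$ here is the coproduct on $U$ constructed in Theorem \ref{dH} (i.e.\ the map $\um$-dual to $m$ on $H$). The key computational input is that $\uD$ on $U$ was defined precisely so that $\ipp{\uD(f)}{a\ot b} = \ip{f}{m_1(a\ot b)} = \ip{f}{m\circ\Psi_{HH}(a\ot b)}$ by \eqref{brcop}, and that the braiding $\Psi_{UU}$ on $U\ot U$ is the restriction of $\Psi_{H'H'}$, which by \eqref{UU} pairs against $\Psi_{HH}$ on $H\ot H$. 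Combining these, the $\Psi_{HH}$ in $m_1$ and the $\Psi_{UU}^{-1} = (\Psi_{H'H'})^{-1}$ should cancel under the pairing, leaving $\ip{\iota(ab)}{f} = f(ab)$, i.e.\ $\um(\iota(a)\ot\iota(b)) = \iota(m(a\ot b))$. The analogous cancellation — now the $\Psi_{HH}^{-1}$ in \eqref{us} (the product $\um$ on $U$) against the $\Psi_{UU}$ (equivalently $\Psi_{H'H'}$) appearing in the coproduct formula \eqref{brcop} for $\uD$ on $\iota(H)$ — should give $\uD(\iota(a)) = (\iota\ot\iota)(\Delta(a))$. The counit statement $\vare(\iota(a)) = \iota(a)(1_U) = \eps_U(a)$ is \eqref{1a} for the pairing between $\iota(H)$ and $U$, which reads $\ip{\iota(a)}{1_U} = \eps(a)$; and $1_U = \eps_H$, so $\iota(a)(1_U) = \eps_H(a) = \eps(a)$ as desired. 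The antipode case follows the same pattern: $S$ on $\iota(H)$ is $f \mapsto f\circ S_U$ by Theorem \ref{dH}, and $\ip{\iota(a)\circ S_U}{f} = \ip{\iota(a)}{S_U(f)} = \ip{\iota(a)}{f\circ S} = f(S(a)) = \ip{\iota(S(a))}{f}$, using the defining property $S_U(f) = f\circ S$ and symmetry of the pairing.

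Once these four identities are in place, $\iota : H \to \iota(H)$ is by construction a bijective linear map compatible with products, coproducts, counits (and antipodes), hence an isomorphism of braided bialgebras (resp.\ braided Hopf algebras) — but one still must check that the braiding is preserved, i.e.\ that $(\iota\ot\iota)\circ\Psi_{HH} = \Psi_{\iota(H)\iota(H)}\circ(\iota\ot\iota)$, where $\Psi_{\iota(H)\iota(H)}$ is the braiding $(\Psi_{U'U'})\!\upharpoonright_{\iota(H)\ot\iota(H)}$ supplied by Theorem \ref{dH}. By \eqref{UU} applied one level up, $\Psi_{U'U'}$ pairs against $\Psi_{UU} = \Psi_{H'H'}\!\upharpoonright$, which in turn pairs against $\Psi_{HH}$; chasing this double dualization back through the non-degenerate pairings shows $\Psi_{U'U'}\!\upharpoonright_{\iota(H)\ot\iota(H)}$ agrees with $\iota\ot\iota$ transported $\Psi_{HH}$. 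The right-dual version is identical after flipping tensor factors and replacing $\Psi^\circ_{HU}$ by $\Psi^\circ_{UH}$, exactly as in the right-handed half of Theorem \ref{dH}.

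The main obstacle I anticipate is bookkeeping: in each of the product and coproduct identities one is composing a braiding coming from the \emph{inner} dualization (the one in Theorem \ref{dH} defining the structure on $U$) with a braiding coming from the \emph{outer} dualization (the one defining the structure on $\iota(H) \subset U'$), and these are inverse to one another by the $\Psi_{HH}$ versus $\Psi_{HH}^{-1}$ discrepancy deliberately built into \eqref{ust} versus \eqref{us} and \eqref{brcop} versus \eqref{Dbraid}. The whole point is that this double dualization causes the braidings to cancel in pairs rather than to compound — getting the order of the Sweedler legs and the chronological braiding indices right so that this cancellation is manifest is the only delicate part; everything else is formal consequence of non-degeneracy together with Lemma \ref{lembraided}, Proposition \ref{lust}, Proposition \ref{lcop} and Theorem \ref{dH}.
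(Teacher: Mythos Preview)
Your approach is essentially the same as the paper's: verify directly that $\iota$ intertwines each structure (braiding, product, coproduct, counit, antipode), deduce that $\iota(H)\subset U'$ satisfies the hypotheses of Theorem~\ref{dH}, and conclude that $\iota$ is a braided bialgebra isomorphism onto a left dual of $U$. The cancellation you describe between the $\Psi_{HH}$ in \eqref{brcop} and the $\Psi_{UU}^{-1}$ in \eqref{us} (and dually for $\uD$) is exactly the computation the paper carries out.

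Two minor points worth tightening. First, your logical order is slightly inverted: you cannot ``apply Theorem~\ref{dH}'' to $\iota(H)\subset U'$ until you know $\iota(H)$ is closed under $\um$ and $\uD$, and that closedness is precisely what the intertwining identities $\um\circ(\iota\ot\iota)=\iota\circ m$ and $\uD\circ\iota=(\iota\ot\iota)\circ\Delta$ establish; the paper therefore proves those first and invokes Theorem~\ref{dH} only afterwards. Second, it is the \emph{left}-handed version of Theorem~\ref{dH} you want (since $\iota(H)$ is to be a left dual of $U$), and the reason the hypothesis is bijectivity of $\Psi_{HU}^\circ$ rather than $\Psi_{UH}^\circ$ is that the paper checks $\Psi_{\iota(H)U}^\circ\circ(\iota\ot\id)=(\id\ot\iota)\circ\Psi_{HU}^\circ$, so bijectivity of $\Psi_{HU}^\circ$ transports to bijectivity of $\Psi_{\iota(H)U}^\circ$, which is the braiding actually required.
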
 
\begin{proof} 
Let $a,b\in H$ and $f,g\in U$. 
Since
$$
\Psi_{\iota(H)\iota(H)}(\iota(a) \hsp\ot\hsp \iota(b))(g\hsp\ot\hsp f)
\oeq{\eqref{UU}} \ipp{a\hsp\ot\hsp b}{\Psi_{UU}(f\hsp\ot\hsp g)} 
\oeq{\eqref{pbbraid}} \ipp{(\iota\hsp\ot\hsp \iota)\circ\Psi_{HH}(a\hsp\ot\hsp b)}{f\hsp\ot\hsp g} ,  
$$
we conclude that $\Psi_{\iota(H)\iota(H)} : \iota(H)\ot \iota(H)\to  \iota(H)\ot \iota(H)$ 
yields a bijection and that $\iota$ intertwines the braidings on $H$ and $\iota(H)$. 
In the same manner, it follows from \eqref{UU} and \eqref{PHU} that 
$\Psi_{\iota(H)U}^\circ\circ(\iota\ot \id)=(\id\ot\iota)\circ\Psi_{HU}^\circ$ 
which shows that $\Psi_{\iota(H)U}^\circ$ is bijective.  Thus, by Lemma \ref{lembraided}, 
it defines a braiding. 
 
By \eqref{UU}, \eqref{brcop} and \eqref{us}, we have
$$
\ip{\um(\iota(a)\hsp\ot\hsp \iota(b)) }{f} = \ipp{a\hsp\ot\hsp b}{\Psi_{UU}^{-1}\circ\uD(f)} 
= \ip{m(a\hsp\ot\hsp b)}{f}= \ip{\iota\circ m(a\hsp\ot\hsp b)}{f}, 
$$
hence $\iota\circ m  = \um\circ (\iota \ot \iota)$.  This implies that 
$\iota(H) \subset U'$ is a unital subalgebra 
and that $\iota$ yields an algebra isomorphism.

Next we prove that $\iota$ determines an coalgebra isomorphism. From  
\[ \label{dud}
\ipp{\uD(\iota(a))}{f\hsp\ot\hsp g} \oeq{\eqref{brcop}} \ip{a\,}{\um \circ \Psi_{UU}(f\hsp\ot\hsp g)} 
\!\oeq{\eqref{UU},\eqref{us}}\! \ipp{ \Delta(a)}{f\hsp\ot\hsp g} 
=  \ipp{(\iota\ot\iota)\circ \Delta(a)}{f\hsp\ot\hsp g} , 
\] 
we conclude that $(\iota\ot\iota)\circ \Delta = \uD\circ\iota$. 
As a consequence, $ \uD(\iota(H)) \subset \iota(H)\ot \iota(H)$. 
Moreover, $\vare(\iota(a))= \ip{a}{1}=\vare(a)$.   
Therefore $\iota(H) \subset U^{\uc}$ is a subcoalgebra isomorphic to $U$.  

Summarizing, we have shown that $\iota(H)$ satisfies the assumption of Theorem \ref{dH} 
so that it becomes a left dual of $U$ with multiplication and comultiplication 
given in \eqref{us} and \eqref{brcop}, 
and that $\iota: H \lra \iota(H)$ yields an isomorphism of braided bialgebras. 
If $H$ is a braided Hopf algebra, then $\iota$ lifts to a braided Hopf algebra isomorphism with 
$S\circ\iota= \iota\circ S$. The opposite version is proven analogously. 
\end{proof}

Given a braided bialgebra $H$, Proposition \ref{lemmD} 
allows us to construct a countable family 
of braided bialgebras. The next proposition gives a description 
of the corresponding dual bialgebras obtained from a left (or right) dual of $H$. 
\begin{proposition}  \label{Pdual}
Let $H$ be a braided bialgebra and let $U$ be a left dual of $H$ 
with respect to the braidings $\Psi_{UU}$ and $\Psi_{UH}^\circ$ 
given in \eqref{pbbraid} and \eqref{ipbb}, respectively.  
For $n\in \Z$\hs, \,$U^{(-n,n)}$ is a left dual of $H^{(n,-n)}$ 
with respect to the braiding $\Psi_{UH}^\circ$, and 
if $\Psi_{UH}$ is bijective, then 
$U^{(-n,n-1)}$ is a left dual of $H^{(n-1,n)}$ 
with respect to the braiding $\Psi_{UH}$.  
The analogous statements are true for  Hopf algebras, 
and  for the opposite versions 
with $\Psi_{UH}^\circ$ and $\Psi_{UH}$ replaced by 
$\Psi_{HU}^\circ$ and $\Psi_{HU}$, respectively. 
\end{proposition}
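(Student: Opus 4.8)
The plan is to deduce Proposition \ref{Pdual} by combining Theorem \ref{dH} with the explicit description of the twisted bialgebras $H^{(k,n)}$ and their duals, rather than redoing any of the pairing computations from scratch. The key observation is that all the relevant data---the braidings $\Psi_{UU}$, $\Psi_{UH}$, $\Psi_{UH}^\circ$, the canonical pairing $\ip{\cdot}{\cdot}$, and the spaces $U$, $H$ themselves---are left unchanged; only the products and coproducts on $U$ and $H$ get twisted by powers of $\Psi_{HH}$, resp.\ $\Psi_{UU}$. So the proof is essentially bookkeeping: one must check that the defining compatibility conditions \eqref{mD}--\eqref{1a} of a dual pairing (Definition \ref{dualp}), read for the twisted structures, are equivalent to the ones already established in Theorem \ref{dH} for the untwisted structures, up to absorbing powers of the braiding.

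First I would recall from Theorem \ref{dH} (and the corollary following it) the precise formulas: with $\Upsilon_{UH}=\Psi_{UH}^\circ$, the product on $U$ is $\um$ with $\ip{\um(f\ot g)}{a}=\ipp{f\ot g}{\Psi_{HH}^{-1}\circ\Delta(a)}$ and the coproduct $\uD$ on $U$ satisfies $\ipp{\uD(f)}{a\ot b}=\ip{f}{m\circ\Psi_{HH}(a\ot b)}$. In the notation of Proposition \ref{lemmD}, this says $\um=m_{-1}^U$ relative to pairing with $\Delta$, and dually; more precisely, Theorem \ref{dH}'s construction is exactly the pairing in which $\um$ pairs with $\Delta_{-1}$ (i.e.\ $\Psi_{HH}^{-1}\circ\Delta$) and $\uD$ pairs with $m_1$ (i.e.\ $m\circ\Psi_{HH}$), so $U$ is a left dual of the bialgebra $H^{(1,-1)}$ in the ``transparent'' sense of the corollary. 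The second step is to iterate: since $m_k = m\circ\Psi_{HH}^k = m_{k-1}\circ\Psi_{HH}$ and $\Delta_n=\Psi_{HH}^n\circ\Delta=\Psi_{HH}\circ\Delta_{n-1}$, twisting $H$ to $H^{(n,-n)}$ precisely multiplies the coproduct side of the pairing identities \eqref{mD}, \eqref{Dm} by $\Psi_{HH}^{\,n-1}$ relative to the Theorem \ref{dH} normalization, and this is absorbed by replacing the product $\um$ on $U$ by $m_{-n}^U$ and the coproduct $\uD$ by $\Delta_n^U$, i.e.\ by passing to $U^{(-n,n)}$. Here I would invoke Lemma \ref{lembraided} and Proposition \ref{lemmD} to guarantee that $\Psi_{UH}^\circ$ remains compatible with the twisted multiplications and comultiplications of $U^{(-n,n)}$ and $H^{(n,-n)}$ (since $\Psi_{UH}^\circ$-compatibility with $m$, $\Delta$ implies compatibility with all $m_k$, $\Delta_n$), and that $H^{(n,-n)}$, $U^{(-n,n)}$ are indeed braided bialgebras with respect to $\Psi_{HH}$, $\Psi_{UU}$. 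The non-degeneracy of the canonical pairing is unaffected by twisting, so ``left dual'' is preserved.

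For the inverse-braiding family, I would run the same argument but starting from the observation in Proposition \ref{lemmD} that $H^{(n-1,-n)}$ is a braided bialgebra with respect to $\Psi_{HH}^{-1}$; correspondingly one uses the braiding $\Psi_{UH}$ (the one built from $\Psi_{HH}$, playing for $\Psi_{HH}^{-1}$ the role that $\Psi_{UH}^\circ$ played for $\Psi_{HH}$), which explains why the hypothesis ``$\Psi_{UH}$ bijective'' is needed there. The pairing then pairs $m_{-n}^U$-type products with $\Delta_n$-type coproducts but now with the understanding that $H$ carries $\Psi_{HH}^{-1}$, giving $U^{(-n,n-1)}$ as a left dual of $H^{(n-1,n)}$. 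The Hopf case is immediate: by Proposition \ref{lemmD} the antipode $S$ (resp.\ $S^{-1}$) serves all the twisted Hopf algebras, and the condition $\ip{\cdot}{\cdot}\circ(S\ot\id)=\ip{\cdot}{\cdot}\circ(\id\ot S)$ from Definition \ref{dualp} is untouched by twisting; the dual antipode $S_U(f)=f\circ S$ from Theorem \ref{dH} works verbatim (or $f\circ S^{-1}$ in the inverse family). The opposite versions follow by the symmetry $\ip{a}{f}=\ip{f}{a}$ exactly as at the end of the proof of Theorem \ref{dH}, replacing $\Psi_{UH}^\circ$, $\Psi_{UH}$ by $\Psi_{HU}^\circ$, $\Psi_{HU}$.

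The main obstacle I anticipate is purely notational: keeping straight which power of the braiding lands on which side of each pairing identity and verifying that the index shift in $U^{(-n,n)}$ versus $H^{(n,-n)}$ (note the ``$-n$'' on the product index of $U$ matching the ``$n$'' on the product index of $H$, a duality-reversal of the roles of product and coproduct) is exactly the one that makes \eqref{mD}--\eqref{Dm} hold. I would handle this by doing the $n=1$ base case completely explicitly---writing out $\ip{\um(f\ot g)}{a}$ for the $H^{(1,-1)}$ pairing and comparing with $\ipp{f\ot g}{\Delta(a)}$---and then reducing the general case to a one-step induction via $m_{k}=m_{k-1}\circ\Psi_{HH}$, $\Delta_{n}=\Psi_{HH}\circ\Delta_{n-1}$, citing Proposition \ref{lemmD} for all the structural claims so that no new braid computation is required.
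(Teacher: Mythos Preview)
Your proposal is correct and follows essentially the same strategy as the paper: reduce everything to Theorem~\ref{dH} and Proposition~\ref{lemmD}, using that twisting the product/coproduct on $H$ by $\Psi_{HH}^{\pm n}$ is mirrored on $U$ by twisting with $\Psi_{UU}^{\mp n}$ via the pairing. The only notable difference is that you plan a base case plus one-step induction, whereas the paper handles arbitrary $n$ in a single direct computation by exploiting the relation $\ipp{\Psi_{UU}^{k}(f\ot g)}{a\ot b}=\ipp{f\ot g}{\Psi_{HH}^{k}(a\ot b)}$ (iterated from \eqref{UU}); with that identity one gets immediately $\ip{\um_{-n}(f\ot g)}{a}=\ipp{f\ot g}{\Psi_{HH}^{-1}\circ\Delta_{-n}(a)}$ and $\ipp{\uD_{n}(f)}{a\ot b}=\ip{f}{m_n\circ\Psi_{HH}(a\ot b)}$, which are exactly the Theorem~\ref{dH} formulas for $H^{(n,-n)}$---no induction needed.
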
 
\begin{proof} 
Let $n\in \Z$. As 
\begin{align*} 
\ip{\um_{\hs -n}(f\ot g)}{a} &\oeq{\eqref{mkDn1}}  \ip{\um\circ \Psi_{UU}^{-n}(f\ot g)}{a} 
\oeq{\eqref{UU},\eqref{us}}  \ipp{f\ot g}{\Psi_{HH}^{-n}\circ \Psi_{HH}^{-1}\circ  \Delta(a)} \\
&\oeq{\eqref{mkDn2}}  \ipp{f\ot g}{\Psi_{HH}^{-1}\circ \Delta_{-n}(a)} 
\end{align*} 
and 
\begin{align*}   \label{brcop}
\ipp{\uD_{n}(f)}{a\ot b} &\oeq{\eqref{mkDn2}} \ipp{\Psi_{UU}^{n} \circ \uD(f)}{a\ot b}
\oeq{\eqref{UU},\eqref{brcop}} \ip{f}{\,m\!\circ\! \Psi_{HH}\!\circ\! \Psi_{HH}^{n}(a\ot b)} \\
&\oeq{\eqref{mkDn1}}  \ip{f}{\,m_{n}\!\circ\! \Psi_{HH}(a\ot b)}
\end{align*}
for $f,g\in U$ and $a,b\in H$, 
it follows from Proposition \ref{lemmD} and Theorem \ref{dH} 
that $U^{(-n,n)}$ is a left dual of $H^{(n,-n)}$. 

In the case of $H^{(n-1,n)}$, we have to replace the braiding $\Psi_{HH}$ on $H$ by $\Psi_{HH}^{-1}$ 
and therefore $\Psi_{UH}^\circ$ by $\Psi_{UH}$. 
A careful look at Proposition \ref{Pdual}, Lemma \ref{lembraided} 
and Theorem \ref{dH} reveals that the assumption of Theorem~\ref{dH} are still satisfied  
after these substitutions. 
Now the same calculations, but with
with $\uD_{n}$ replaced by $\uD_{n-1}$, show the result.  

The opposite versions are proven similarly. The statement about Hopf algebras follows 
from Proposition \ref{lemmD} and the definition of the antipode on the dual Hopf algebras. 
\end{proof}

\section{Modules and comodules  in the braided setting}   \label{sec5} 
 
A typical application of duality is to turn a comodule of a coalgebra into a module of a dual algebra. 
Adding more structure, a braided comodule algebra of a braided bialgebra should become 
a braided module algebra of a dual braided bialgebra. This will be discussed in Theorem \ref{mcom}.  
The dual version, i.e., turning a braided module into a braided comodule of a dual coalgebra, 
will be presented in Theorem \ref{mmm}. Propositions \ref{mco} and \ref{com} elaborate the same idea 
on duals of (co)modules. Similarly to Lemma  \ref{lembraided}, 
we start by lifting braidings on vector spaces to 
dual spaces. 

As in the previous section, we will make frequent use of the Sweedler-type notation:  
\begin{align*}
&\Psi_{VW}(v\ot w) := w^{\Br{1}} \ot v^{\Br{2}}, & &\Psi_{VW}^{-1}(w\ot v) := v^{\pBr{1}} \ot w^{\pBr{2}}, \\ 
&\Psi_{VW}^{\circ}(v\ot w) := w^{\cBr{1}} \ot v^{\cBr{2}}, &
&\Psi_{VW}^{\circ\hs -1}(w\ot v) := v^{{\cBr{1}}^\backprime} \ot w^{{\cBr{2}}^\backprime}, 
\end{align*}
where $v\in V$, $w\in W$, and $\Psi_{VW}^{\circ}$ denotes a braiding that is constructed 
from the inverse of a given one.  In this notation, we have 
\[ \label{ipBr}
 w^{\pBr{2}\,\hs \Br{1}} \ot v^{\pBr{1}\,\hs\Br{2}} =  w\ot v , 
 \qquad v^{\Br{2}\pBr{1}\,}\ot w^{\Br{1}\pBr{2}\,} =  v \ot w, 
\] 
and the same holds for $\Psi_{VW}^{\circ}$. 
It turns out that the braidings of the type $\Psi_{VW}^{\circ}$ 
will be the correct ones for obtaining our desired result as it happened in Theorem \ref{dH}. 
For this reason, we will give four versions of the auxiliary results on braidings, 
a left version, a right version and the corresponding versions arising from the inverse braidings.

\begin{lemma}  \label{dbvs}
Let $V$ be a right $H$-braided vector space. 
Consider the linear map 
\[ \label{bHV}
\Psi_{HV'} : H \ot V' \lra  (V \ot H')' , \quad  
\Psi_{HV'}(a\ot e)(v\ot f):= \ipp{e\ot f}{\Psi_{VH}(v\ot a)}, 
\] 
for $a\in H$, $v\in V$, $f\in H'$ and $e\in V'$. 
Let \,$W\subset V'$ be a non-degenerate subspace such that 
\[
\Psi_{HW} := \Psi_{HV'}\!\!\upharpoonright_{H\ot W}  \; : H \ot W \lra W\ot H\subset  (V \ot H')'  
\]
is bijective.  Then $\Psi_{HW}$ turns $W$ into a left $H$-braided vector space. 

If $H$ is a braided coalgebra and $\Psi_{VH}$ is compatible with the comultiplication, 
then $\Psi_{HW}$ is also compatible with the comultiplication of $H$. 
If $H$ is a braided (unital) algebra and $\Psi_{VH}$ is compatible with the multiplication, 
then $\Psi_{HW}$ is compatible with the multiplication of $H$. 

For a  left $H$-braided vector space $V$, it is required that 
\[
\Psi_{WH} := \Psi_{V'H}\!\!\upharpoonright_{W\ot H}  \; : W \ot H \lra H\ot W\subset  (H' \ot V)', 
\]
is bijective, where 
\[ \label{bHpV}
\Psi_{V'H} : V' \ot H \lra (H' \ot V)', \quad 
\Psi_{V'H}(e\ot a)(f\ot v):= \ipp{f\ot e}{\Psi_{HV}(a\ot v)}.
\]
In this case,  $\Psi_{WH}$ turns $W$ into a right $H$-braided vector space 
and the other implications remain the same under identical assumptions. 

The analogous statements hold for $\Psi_{WH}^{\circ}$ and  $\Psi_{HW}^{\circ}$ if 
\[ \label{cbWH}
\Psi_{V'H}^{\circ} : V' \ot H \lra  (H' \ot V)' , \quad  
\Psi_{V'H}^{\circ}(e\ot a)(f\ot v):= \ipp{f\ot e}{\Psi_{VH}^{-1}(a\ot v)}, 
\] 
yields a bijective map 
\[
\Psi_{WH}^{\circ} := \Psi_{V'H}^{\circ}\!\!\upharpoonright_{W\ot H}  \; : W \ot H \lra H\ot W\subset  (H' \ot V)',   
\]
and if 
\[ \label{cHV}
\Psi_{HV'}^{\circ}  : H \ot V' \lra  (V \ot H')' , \quad
\Psi_{HV'}^{\circ} (a\ot e)(v\ot f):= \ipp{e\ot f}{\Psi_{HV}^{-1}(v\ot a)}
\] 
yields a bijective map 
\[
\Psi_{HW}^{\circ} := \Psi_{HV'}^{\circ} \!\!\upharpoonright_{H\ot W}  \; : H \ot W \lra W\ot H\subset  (V \ot H')'. 
\]
\end{lemma}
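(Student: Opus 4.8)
The strategy is to reduce everything to the compatibility relations already verified for $\Psi_{VH}$ (or $\Psi_{HV}$ in the left-handed case) by using the non-degeneracy of the canonical pairings, exactly as in the proof of Lemma~\ref{lembraided}. The role played there by $H$ and $U\subset H'$ is now played by the pair $(V,H)$ together with the subspace $W\subset V'$; the ``spectator'' factor that carried $H$ in Lemma~\ref{lembraided} is now $H'$ (resp.\ $V$), and evaluation on it is what turns an identity in $(V\ot H')'$ (resp.\ $(H'\ot V)'$) into the desired identity in $W\ot H$ (resp.\ $H\ot W$).

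\textbf{Main steps.} First I would record the Sweedler-type translations of \eqref{bHV}--\eqref{bHpV}: writing $\Psi_{HW}(a\ot e)=e^{\Br{1}}\ot a^{\Br{2}}$ and $\Psi_{VH}(v\ot a)=a^{\Br{1}}\ot v^{\Br{2}}$, the defining identity \eqref{bHV} reads $e^{\Br{1}}(v)\,f(a^{\Br{2}}) = e(v^{\Br{2}})\,f(a^{\Br{1}})$ for all $f\in H'$, $v\in V$; by non-degeneracy of the $H'$--$H$ pairing this is equivalent to $e^{\Br{1}}(v)\,a^{\Br{2}} = e(v^{\Br{2}})\,a^{\Br{1}}$, the analogue of \eqref{phcph}. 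Next, to show $\Psi_{HW}$ makes $W$ a left $H$-braided vector space one must verify \eqref{VW} with the roles $V\rightsquigarrow H$, $W\rightsquigarrow W$; evaluate both sides on an arbitrary $v\ot f_1\ot f_2\in V\ot H'\ot H'$, push everything through the above translation until all braidings act inside $V\ot H\ot H$ (where $H$'s braidings live), apply the mixed Yang--Baxter relation \eqref{WV} for the right $H$-braided vector space $V$, and translate back; non-degeneracy in the $V$-slot finishes it. The compatibility of $\Psi_{HW}$ with the comultiplication of $H$ (when $\Psi_{VH}$ satisfies \eqref{PVH}) is proven the same way: evaluate $(\id\ot\Delta)\circ\Psi_{HW}$ and the right side of \eqref{PHW} on $v\ot f_1\ot f_2$, use \eqref{PVH} for $\Psi_{VH}$ together with \eqref{YBE} on $H$, and appeal to non-degeneracy; the counit relation $(\id\ot\eps)\circ\Psi_{HW}=\eps\ot\id$ follows from the second identity in \eqref{PVH} evaluated against a single $f\in H'$. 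Compatibility with the multiplication of $H$ is handled identically using \eqref{AWm}, \eqref{1W} for $\Psi_{VH}$. Then for the left $H$-braided vector space version one repeats the argument verbatim with $\Psi_{V'H}$ in place of $\Psi_{HV'}$, $W\ot H\subset (H'\ot V)'$ in place of $W\ot H\subset (V\ot H')'$, and \eqref{WV} replaced by \eqref{VW} for the left $H$-braided vector space $V$. Finally the four ``$\circ$'' versions are obtained by the same computations with $\Psi_{VH}$ (resp.\ $\Psi_{HV}$) replaced by $\Psi_{VH}^{-1}$ (resp.\ $\Psi_{HV}^{-1}$) throughout; by Lemma~\ref{Linv} the inverse braidings satisfy the same compatibility relations, so no new work is needed beyond the notational substitution $\Br{k}\rightsquigarrow\cBr{k}$, $\br{k}\rightsquigarrow\pbr{k}$.

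\textbf{Expected obstacle.} The computations themselves are routine once the dictionary is set up; the delicate point is \emph{bookkeeping of the side on which $W$ sits} and \emph{which Yang--Baxter-type relation applies}. Because $W\subset V'$ and a right $H$-braided structure on $V$ induces a left $H$-braided structure on $W$ (the inversion of handedness already flagged in Lemma~\ref{lembraided} and Lemma~\ref{Linv}), one must be careful that the mixed relation invoked is \eqref{WV} (the right $W$-braided vector space axiom for $V$) and not \eqref{VW}; getting this backwards produces a relation with the crossings in the wrong order. A secondary subtlety is ensuring, at each use of non-degeneracy, that one is separating points in the correct tensor factor — the $V$-slot when concluding an identity in $W\ot H\subset(V\ot H')'$, the $H$-slot when eliminating the auxiliary $H'$ — since $W$ and $H$ play asymmetric roles (only $W$ is assumed non-degenerate as a subspace of $V'$, whereas $H\hookrightarrow H''$ is automatically non-degenerate). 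I would therefore state the $V$-slot and $H'$-slot separations explicitly at each step rather than leaving them implicit.
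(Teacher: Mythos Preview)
Your proposal is correct and follows essentially the same approach as the paper: both derive the key translation identity $e^{\Br{1}}(v)\,a^{\Br{2}}=e(v^{\Br{2}})\,a^{\Br{1}}$ from \eqref{bHV} and non-degeneracy, then use it to transport each required relation for $\Psi_{HW}$ back to the corresponding relation \eqref{WV}, \eqref{PVH}, \eqref{VAm}, \eqref{V1} for $\Psi_{VH}$, with the $\circ$-versions obtained via Lemma~\ref{Linv}. Two minor points: the comultiplication step needs only \eqref{PVH} (no appeal to \eqref{YBE} is required), and for the multiplication compatibility the hypotheses on $\Psi_{VH}$ are \eqref{VAm} and \eqref{V1} rather than \eqref{AWm} and \eqref{1W}, since $H$ sits on the right of $\Psi_{VH}$.
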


\begin{proof} 
Let $a,b\in H$, $f\in H'$, $v\in V$ and $e\in W$. 
In Sweedler-type notation, we can write \eqref{bHV} in the form 
$e^{\Br{1}}\hsp (v)\, f(a^{\Br{2}}) = e(v^{\Br{2}})\, f(a^{\Br{1}})$, 
which is equivalent to 
\[ \label{eva} 
e^{\Br{1}}\hsp (v)\, a^{\Br{2}} = e(v^{\Br{2}})\, a^{\Br{1}} . 
\]
The proof is now straightforward,   
nevertheless we give parts of the proof in order to show how \eqref{eva} 
enables us to move the action of the new braiding to the spaces where the 
given braiding is defined.  
For instance, 
\begin{align*}
& e^{\Br{1}_1\Br{1}_2}(v) \,  b^{\br{1}\Br{2}_2}  \ot   a^{\br{2}\Br{2}_1}
\oeq{\eqref{eva}} 
e(v^{\Br{2}_1\Br{2}_2}) \,  b^{\br{1}\Br{1}_1} \ot  a^{\br{2}\Br{1}_2}\\
&\oeq{\eqref{WV}}  
e(v^{\Br{2}_1\Br{2}_2}) \,   b^{\Br{1}_2\br{1}} \ot a^{\Br{1}_1\br{2}}
\oeq{\eqref{eva}} 
e^{\Br{1}_1\Br{1}_2}(v) \, b^{\Br{2}_1\br{1}} \ot a^{\Br{2}_2\br{2}}  
\end{align*} 
proves \eqref{VW}. Furthermore, 
\begin{align*}
e^{\Br{1}_1\Br{1}_2}(v)\,  a_{(1)}{\!}^{\Br{2}_2} \ot a_{(2)}{\!}^{\Br{2}_1}
&\oeq{\eqref{eva}} 
e(v^{\Br{2}_1\Br{2}_2}) \, a_{(1)}{\!}^{\Br{1}_1} \ot a_{(2)}{\!}^{\Br{1}_2} 
\oeq{\eqref{PVH}} 
e(v^{\Br{2}}) \, a^{\Br{1}}{\!}_{(1)} \ot a^{\Br{1}}{\!}_{(2)} \\
&\oeq{\eqref{eva}} 
e^{\Br{1}}(v) \, a^{\Br{2}}{\!}_{(1)} \ot a^{\Br{2}}{\!}_{(2)} 
\end{align*} 
implies the first relation of \eqref{PHW}. The second relation follows from 
$$
\vare(a^{\Br{2}}) \hs e^{\Br{1}}\!(v) \oeq{\eqref{eva}} \vare(a^{\Br{1}}) \hs e(v^{\Br{2}}) 
\oeq{\eqref{PVH}} \vare(a) \hs e(v). 
$$
Likewise, 
\begin{align*}
 &  e^{\Br{1}}\hsp(v)  \, (ab)^{\Br{2}} 
 \oeq{\eqref{eva}}  e(v^{\Br{2}})\, (ab)^{\Br{1}} 
 \oeq{\eqref{VAm}} e(v^{\Br{2}_1\Br{2}_2})\, a^{\Br{1}_1}b^{\Br{1}_2}  
  \oeq{\eqref{eva}}  e^{\Br{1}_1\Br{1}_2}(v)\, a^{\Br{2}_2}\hs b^{\Br{2}_1}, 
\end{align*} 
which yields \eqref{AWm}. If $1\in H$, then \eqref{1W} follows from \eqref{V1}  and \eqref{eva} with $a=1$. 

The proof of the opposite version with the braiding $\Psi_{WH}$ uses similar arguments. 
Combining the obtained results with Lemmas \ref{Linv}  proves the statements 
for $\Psi_{WH}^{\circ}$ and $\Psi_{HW}^{\circ}$. 
\end{proof} 

Given an $H$-braided vector space $V$, the previous lemma showed how to define 
braidings between $H$ and appropriate subspaces of $V'$. 
The next lemma fixes $V$ and shows how to induce braidings between $V$ 
and appropriate subspaces $U\subset H'$. 
Moreover, if $U$ inherits a (co)multiplication from $H$, then the braidings 
between $U$ and $V$ inherit the compatibility properties 
from the corresponding braiding between $H$ and $V$. 

\begin{lemma}  \label{LUVH}
Let $V$ be a right $H$-braided vector space and consider the linear map   
\[ \label{bUVW}
\Psi_{H'V} : H' \ot V \lra  (V' \ot H)' , \quad  
\Psi_{H'V}(f\ot v)(e\ot a):=  \ipp{e\ot f}{\Psi_{VH}(v\ot a)}, 
\] 
for $a\in H$, $v\in V$, $f\in H'$ and $e\in V'$. 
Assume that $U\subset H'$ is a non-degenerate subspace such that 
$\Psi_{UU}$  given in \eqref{pbbraid} defines a braiding.  If 
\[     \label{PsiUV}
\Psi_{UV} := \Psi_{H'V}\!\!\upharpoonright_{U\ot V}  \; : U \ot V \lra V\ot U\subset  (V' \ot H)', 
\]
is bijective, then $\Psi_{UV}$ turns $V$ into a left $U$-braided vector space. 

If $H$ is a braided coalgebra,  $\Psi_{VH}$ is compatible with the comultiplication on $H$,    
and $U$ satisfies the assumptions of Proposition~\ref{lust}, 
then $\Psi_{UV}$ is compatible with the multiplication~$\ust$ from \eqref{ust} on \hs$U$. 
If $H$ is a braided unital algebra, $\Psi_{VH}$ is compatible with the multiplication on $H$,    
and $U$ satisfies the assumptions of Proposition~\ref{lcop}, 
then $\Psi_{UV}$ is compatible with the comultiplication~$\uD$ \,from  \eqref{brcop}  on~$\hs U$. 

Assume that $V$ is an algebra and $\Psi_{VH}$ is compatible with the multiplication on \hs$V$. 
Then $\Psi_{UV}$ is compatible with the multiplication on \hs$V$. 
If \hs$V$ is a coalgebra and $\Psi_{VH}$ is compatible with the comultiplication on \,$V$, then 
$\Psi_{UV}$ is also compatible with the comultiplication on \hs$V$. 

Given a  left $H$-braided vector space $V$  such that 
$$
\Psi_{VU} := \Psi_{VH'}\!\!\upharpoonright_{V\ot U}  \; : V \ot U \lra U\ot V\subset  (H \ot V')' 
$$ 
is bijective, where 
\[ \label{bVpH}
\Psi_{VH'} : V \ot H' \lra  (H \ot V')' , \quad  \Psi_{VH'}(v\ot f)(a\ot e):= \ipp{f\ot e}{\Psi_{HV}(a\ot v)}, 
\]
the map $\Psi_{VU}$ defines a braiding that turns $V$ into a right $U$-braided vector space    
and the opposite versions of above compatibility statements remain true.   

The analogous assertions hold if 
\[ \label{cbVU}
\Psi_{VH'}^{\circ} : V \ot H' \lra (H \ot V')' , \quad  
\Psi_{VH'}^{\circ}(v\ot f)(a\ot e):= \ipp{f\ot e}{\Psi_{VH}^{-1}(a\ot v)}, 
\] 
yields a bijective map 
\[
\Psi_{VU}^{\circ} := \Psi_{VH'}^{\circ}\!\!\upharpoonright_{V\ot U}  \; : V \ot U \lra U\ot V\subset  (H \ot V')',   
\]
and if 
\[   \label{cPHV} 
\Psi_{H'V}^{\circ}  : H' \ot V \lra  (V' \ot H)', \quad
\Psi_{H'V}^{\circ} (f\ot v)(e\ot a):=  \ipp{e\ot f}{\Psi_{HV}^{-1}(v\ot a)}, 
\] 
yields a bijective map 
\[  \label{cPUV} 
\Psi_{UV}^{\circ} := \Psi_{H'V}^{\circ} \!\!\upharpoonright_{U\ot V}  \; : U \ot V \lra V\ot U\subset (V' \ot H)'. 
\]
\end{lemma}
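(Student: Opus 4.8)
The plan is to proceed exactly as in the proof of Lemma \ref{lembraided}, translating every relation into the Sweedler-type notation of \eqref{bUVW} and pushing the action of the new braiding $\Psi_{UV}$ back onto the spaces $H$ and $V$ where the given braiding $\Psi_{VH}$ lives, so that the known compatibilities can be applied. Concretely, writing $\Psi_{UV}(f\ot v) = v^{\Br{1}}\ot f^{\Br{2}}$, the defining formula \eqref{bUVW} reads, after testing against $e\ot a\in V'\ot H$,
\[
e(v^{\Br{1}})\, f^{\Br{2}}(a) = e(v^{\br{1}})\, f(a^{\br{2}})
\]
in the notation $\Psi_{VH}(v\ot a)= a^{\br{1}}\ot v^{\br{2}}$; since $V'$ (resp.\ $U\subset H'$) separates the points of $V$ (resp.\ $H$) this identity is the sole working tool. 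The first task is to verify \eqref{VW}, that $\Psi_{UV}$ makes $V$ a left $U$-braided vector space: one evaluates both sides of \eqref{VW} on a generic element of $U\ot V'\ot V'$ (or $V'\ot U\ot U$, depending on which leg is dualized), uses the displayed identity twice to move everything to $H\ot V\ot V$, applies \eqref{WV} for the given braiding $\Psi_{VH}$ together with \eqref{YBE} for $\Psi_{HH}$, and moves back. This is the same bookkeeping as in \eqref{ibr}.

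Next I would treat the compatibility of $\Psi_{UV}$ with the \emph{given} structures on $V$ (the last ``if \,$V$ is an algebra/coalgebra'' paragraph): if $\Psi_{VH}$ satisfies \eqref{VAm} (resp.\ \eqref{PVH}) on $V$, one pushes $\Psi_{UV}$ through $m_V$ or $\Delta_V$ using the displayed identity, applies the compatibility of $\Psi_{VH}$, and pushes back, exactly mirroring \eqref{psicomp} and the coalgebra computation following it. Then comes the genuinely new ingredient: the compatibility of $\Psi_{UV}$ with the \emph{induced} product $\ust$ from \eqref{ust} and the induced coproduct $\uD$ from \eqref{brcop} on $U$. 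Here one combines the displayed identity with the Sweedler form of $\ust$ (namely $f\ust g(a) = f(a_{(1)}{}^{\br{2}})\,g(a_{(2)}{}^{\br{1}})$) and with \eqref{bcop}, exactly as in \eqref{SwAWm}--\eqref{SwVAm}; the point is that $\Psi_{VH}$ being compatible with $\Delta_H$ (resp.\ $m_H$) is precisely what lets $\Psi_{UV}$ commute past a coproduct (resp.\ product) of $H$ that has been shifted onto $U$. The counit/unit relations \eqref{1W}, \eqref{V1} are handled by the second relations in \eqref{PHW}, \eqref{PVH}, as in \eqref{Sw1}.

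Finally, the opposite versions with $\Psi_{VU}$ (from \eqref{bVpH}) and the circled versions $\Psi_{UV}^{\circ}$, $\Psi_{VU}^{\circ}$ (from \eqref{cPHV}, \eqref{cbVU}) follow formally: by Lemma \ref{Linv} the inverse braiding $\Psi_{VH}^{-1}$ carries the right $H$-braided structure on $V$ to a left one and preserves all the compatibilities, so running the same computations with $\br{k}$ replaced by $\pbr{k}$ and $\Br{k}$ by $\cBr{k}$ gives the circled statements, and swapping the roles of the two tensor legs gives the opposite-handed ones; I would say this explicitly and leave the transcription to the reader, as is done throughout the paper. The main obstacle, as elsewhere in this section, is purely notational discipline: keeping track of which crossing is a $\Br{\cdot}$, which a $\br{\cdot}$, and which a $\pbr{\cdot}$, and applying \eqref{YBE}/\eqref{bcop}/\eqref{YBe} with the correct chronological indices — there is no conceptual difficulty beyond what Lemma \ref{lembraided} already overcame, only the need to carry four parallel versions in lockstep.
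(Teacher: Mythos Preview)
Your plan is correct and follows essentially the same route as the paper: derive the working identity $f^{\Br{2}}(a)\,v^{\Br{1}} = f(a^{\Br{1}})\,v^{\Br{2}}$ from \eqref{bUVW}, use it together with \eqref{WV} for $\Psi_{VH}$ to verify \eqref{VW}, then push $\Psi_{UV}$ through the (co)multiplications on $U$ and $V$ exactly via the compatibilities of $\Psi_{VH}$, and finish the circled and opposite versions by Lemma~\ref{Linv}. One small slip: in your displayed identity you have the indices on the right-hand side swapped (with $\Psi_{VH}(v\ot a)=a^{\Br{1}}\ot v^{\Br{2}}$ the right-hand side should read $e(v^{\Br{2}})\,f(a^{\Br{1}})$), but this is purely notational and does not affect the argument.
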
 
\begin{proof} 
Although the lemma is proven along the lines of the previous ones, 
we will state the proof in order to demonstrate 
where the duality between $H$ and $U$ is used. 
Let $f,g\in U$, $u,v\in V$, $a,b\in H$ and $e\in W$. 
First note that 
$e(v^{\Br{1}}) \hs f^{\Br{2}}(a) \oeq{\eqref{bUVW}} e(v^{\Br{2}}) \hs f(a^{\Br{1}})$ implies  
\[  \label{vfa}
 f^{\Br{2}}\hsp(a) \,  v^{\Br{1}} =  f(a^{\Br{1}}) \, v^{\Br{2}}
\]  
and 
\[  \label{vfga}
 g^{\Br{2}_2}(b)\, f^{\Br{2}_1}(a) \, v^{\Br{1}_1\Br{1}_2}  
  \oeq{\eqref{vfa}}  
  g^{\Br{2}_2}(b)\, f(a^{\Br{1}_1}) \, v^{\Br{2}_1\Br{1}_2}  
  \oeq{\eqref{vfa}}  
   g(b^{\Br{1}_2})\, f(a^{\Br{1}_1}) \, v^{\Br{2}_1\Br{2}_2}. 
\]
Thus  
\begin{align*}
&f^{\br{2}\Br{2}_1}(a)\, g^{\br{1}\Br{2}_2}(b)\, v^{\Br{1}_1\Br{1}_2} 
\oeq{\eqref{phh},\eqref{vfga}} f(a^{\Br{1}_1 \br{2}})\, g(b^{\Br{1}_2\br{1}})\, v^{\Br{2}_1\Br{2}_2}\\
&\oeq{\eqref{WV}}  f(a^{ \br{2}\Br{1}_2})\, g( b^{\br{1}\Br{1}_1})\, v^{\Br{2}_1\Br{2}_2}
\oeq{\eqref{phh},\eqref{vfga}} f^{\Br{2}_2\br{2}}(a)\, g^{\Br{2}_1\br{1}}(b)\, v^{\Br{1}_1\Br{1}_2}, 
\end{align*} 
which proves \eqref{VW}, so $V$ becomes a left $U$-braided vector space with respect to 
the braidings $\Psi_{UU}$ and $\Psi_{UV}$. 

To prove compatibility with the multiplication $\ust$ from \eqref{ust} on $U$, we compute that 
\begin{align*}
&(f\ust g)^{\Br{2}}(a)\, v^{\Br{1}}  
\oeq{\eqref{vfa}} 
(f\ust g)(a^{\Br{1}})\, v^{\Br{2}} 
 \oeq{\eqref{ust}}  
 f(a^{\Br{1}}{\!}_{(1)}{\!}^{\br{2}})\, g(a^{\Br{1}}{\!}_{(2)}{\!}^{\br{1}}) \, v^{\Br{2}} \\
 &\oeq{\eqref{WV},\eqref{PVH}}  
 f(a_{(1)}{\!}^{\br{2}\Br{1}_2})\, g(a_{(2)}{\!}^{\br{1}\Br{1}_1}) \, v^{\Br{2}_1\Br{2}_2}  
 \oeq{\eqref{vfga}} f^{\Br{2}_2}(a_{(1)}{\!}^{\br{2}})\, g^{\Br{2}_1}(a_{(2)}{\!}^{\br{1}})\, v^{\Br{1}_1\Br{1}_2}\\ 
  &\oeq{\eqref{ust}}  (f^{\Br{2}_2}\ust  g^{\Br{2}_1})(a)\, v^{\Br{1}_1\Br{1}_2}. 
\end{align*} 
This implies \eqref{AWm}, 
and \eqref{1W} follows from 
$\eps^{\Br{2}}\hsp (a)\hs  v^{\Br{1}} \oeq{\eqref{vfa}} \eps(a^{\Br{1}})\hs v^{\Br{2}}   
\oeq{\eqref{PVH}} \eps(a)\hs v$ 
since $\vare$ yields the unit element in dual algebra $U \subset H'$.  

To prove the compatibility with the comultiplication of $U$, we proceed in the same manner.    
\begin{align*}
&f^{\Br{2}}{\!}_{(1)}\hsp (b) \, f^{\Br{2}}{\!}_{(2)}\hsp (a)\, v^{\Br{1}} 
 \oeq{\eqref{brcop}}  
 f^{\Br{2}}(b^{\br{1}}a^{\br{2}}) \,  v^{\Br{1}} 
 \oeq{\eqref{vfa}}  f((b^{\br{1}}a^{\br{2}})^{\Br{1}}) \, v^{\Br{2}} \\
  &  \oeq{\eqref{VAm}} f(b^{ \br{1} \Br{1}_1}a^{\br{2} \Br{1}_2}) \,  v^{\Br{2}_1\Br{2}_2}  
   \oeq{\eqref{WV}} f(b^{\Br{1}_2\br{1}}a^{\Br{1}_1\br{2}}) \,  v^{\Br{2}_1\Br{2}_2}  \\
&\oeq{\eqref{brcop}} f_{(1)}(b^{\Br{1}_2} ) \,  f_{(2)}(a^{\Br{1}_1})\, v^{\Br{2}_1\Br{2}_2} 
  \oeq{\eqref{vfga}}   f_{(1)}{\!}^{\Br{2}_2}(b ) \,  f_{(2)}{\!}^{\Br{2}_1} (a)\, v^{\Br{1}_1 \Br{1}_2} 
\end{align*} 
shows the first relation of \eqref{PHW}. 
The second relation of  \eqref{PHW} follows from 
$$
\vare(f^{\Br{2}})\, v^{\Br{1}} = f^{\Br{2}}(1) \, v^{\Br{1}} 
 \oeq{\eqref{vfa}} f(1^{\Br{1}}) \, v^{\Br{2}} 
  \oeq{\eqref{V1}}  f(1)  \, v  = \vare(f) \, v . 
$$

If $V$ is an algebra and $\Psi_{VH}$ is compatible with the multiplication, then 
\begin{align*} 
f^{\Br{2}}(a) \hs (vu)^{\Br{1}}  \oeq{\eqref{vfa}} f(a^{\Br{1}}) \hs (vu)^{\Br{2}} 
 \oeq{\eqref{AWm}}  f(a^{\Br{1}_1\Br{1}_2}) \hs v^{\Br{2}_2} u^{\Br{2}_1} 
  \oeq{\eqref{vfa}}  f^{\Br{2}_1 \Br{2}_2 }(a) \hs v^{\Br{1}_1} u^{\Br{1}_2} . 
\end{align*} 
Furthermore, if $1\in V$, we have 
$f^{\Br{2}}(a)\hs 1^{\Br{1}}  \oeq{\eqref{vfa}} f(a^{\Br{1}}) \hs 1^{\Br{2}} \oeq{\eqref{1W}} f(a) \hs 1$.
This and the previous computation show the compatibility of $\Psi_{UV}$ 
with the multiplication of $V$. 

Assume now that $V$ is a coalgebra and that $\Psi_{VH}$ is compatible with the comultiplication. Then  
\begin{align*}
f^{\Br{2}_1 \Br{2}_2 }(a) \hs v_{(1)}{\!}^{\Br{1}_1} \ot v_{(2)}{\!}^{\Br{1}_2} 
&\oeq{\eqref{vfa}} f(a^{\Br{1}_1 \Br{1}_2 }) \hs v_{(1)}{\!}^{\Br{2}_2} \ot v_{(2)}{\!}^{\Br{2}_1} 
\oeq{\eqref{PHW}} f(a^{\Br{1}}) \hs v^{\Br{2}}{\!}_{(1)} \ot v^{\Br{2}}{\!}_{(2)} \\
& \oeq{\eqref{vfa}} f^{\Br{2}}(a) \hs v^{\Br{1}}{\!}_{(1)} \ot v^{\Br{1}}{\!}_{(2)}, 
\end{align*} 
and 
$ f^{\Br{2}}\hsp(a)\, \vare(v^{\Br{1}}) \oeq{\eqref{vfa}}  f(a^{\Br{1}})\, \vare(v^{\Br{2}})
\oeq{\eqref{PHW}}  f(a)\, \vare(v)$. Hence $\Psi_{UV}$ is also compatible 
with the comultiplication of $V$. 

The opposite versions are proven analogously, and the last part of the lemma 
follows from the first part by applying Lemma \ref{Linv}.  
\end{proof} 

Given a left $H$-braided vector space $V$ and subspaces $U\subset H'$  and $W\subset V'$ 
satisfying the assumptions of Lemmas \ref{dbvs} and \ref{LUVH}, 
there are two ways of constructing a braiding $\Psi_{UW}$ on $U \ot W$, either 
by the restriction of $\Psi_{UV'}$ from \eqref{bHV} or by the restriction of $\Psi_{H'W}$ from \eqref{bUVW}. 
The next lemma shows that both constructions coincide whenever one of them can be realized.   
Equally, we can use either $\Psi_{VU}^\circ$ or $\Psi_{WH}^\circ$ to construct a braiding on $U\ot W$. 
In this case, the resulting braiding will be denoted by $\Psi_{UW}^{\bullet}$. 
Analogous results hold for right $H$-braided vector spaces. 

\begin{lemma} \label{bUW}
Let $V$ be a left $H$-braided vector space. 
Assume that $U\subset H'$ and $W\subset V'$ satisfy the conditions of 
Lemmas \ref{dbvs} and \ref{LUVH} such that the braidings $\Psi_{WH}$ and $\Psi_{VU}$ 
are well-defined. 
If either $\Psi_{H'W}\!\!\upharpoonright_{U\ot W}: U\ot W\to W\ot U$ or 
$\Psi_{UV'}\!\!\upharpoonright_{U\ot W}: U\ot W \to W\ot U$ is bijective, 
then so is the other and 
$\Psi_{H'W}\!\!\upharpoonright_{U\ot W} \,= \Psi_{UV'}\!\!\upharpoonright_{U\ot W}\,=: \Psi_{UW}$. 

Similarly, if $\Psi_{HW}^\circ$ and $\Psi_{UV}^\circ$ are well-defined 
and if either $\Psi_{V'U}^{\bullet}\!\!\upharpoonright_{W\ot U}$ or 
$\Psi_{WH'}^{\bullet}\!\!\upharpoonright_{W\ot U}$ yields a bijective map between 
$W\ot U$ and $U\ot W$, 
then $\Psi_{V'U}^{\bullet}\!\!\upharpoonright_{W\ot U}\,
= \Psi_{WH'}^{\bullet}\!\!\upharpoonright_{W\ot U}\, =: \Psi_{WU}^{\bullet}$, 
where  
\begin{align} \label{VUb}
&\Psi_{V'U}^{\bullet}: V' \ot U \to ( H \ot V)', &
&\Psi_{V'U}^{\bullet}(e\ot f)(a\ot v) :=  \ipp{a\ot e}{\Psi_{UV}^\circ(f\ot v)},\\
&\Psi_{WH'}^{\bullet} : W\ot H' \to (H\ot V)',  &
&\Psi_{WH'}^{\bullet}(e\ot f)(a\ot v) := \ipp{f\ot v}{\Psi_{HW}^\circ(a\ot e)}.  \label{WHb}
\end{align} 

If \,$V$ is a right $H$-braided vector space, then 
the analogous statements holds for $\Psi_{WU}$ and $\Psi_{UW}^{\bullet}$ 
under homologous assumptions, where 
\[
\Psi_{UW}^{\bullet}(f\ot e)(a\ot v) = \ipp{e\ot a}{\Psi_{VU}^\circ(v\ot f)} 
=  \ipp{v\ot f}{\Psi_{WH}^\circ(e\ot a)}. 
\]
\end{lemma}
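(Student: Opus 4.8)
## Proof Plan for Lemma \ref{bUW}

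The plan is to reduce the claimed equality of two \emph{a priori} different braidings on $U \ot W$ to a single computation with the canonical pairings, exploiting that both $U$ and $W$ are non-degenerate subspaces of the respective dual spaces. First I would unpack the two candidate maps: the restriction of $\Psi_{H'W}$ from \eqref{bUVW} sends $f \ot e \in U \ot W$ to an element of $(W' \ot H)'$ defined by $(f\ot e) \mapsto \big((e' \ot a) \mapsto \ipp{e'\ot f}{\Psi_{VH}(\,\cdot\,)}\big)$-type formulas, while the restriction of $\Psi_{UV'}$ from \eqref{bHV} sends $f \ot e$ to an element of $(V \ot H')'$ built from $\Psi_{VH}(v \ot \cdot\,)$. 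Since in both cases the image is assumed to lie in $W \ot U \subset V' \ot H'$, I can test equality by evaluating on an arbitrary simple tensor $v \ot a \in V \ot H$, using that $V \ot H$ separates the points of $V' \ot H' \supset W\ot U$. This is exactly the mechanism used throughout Lemmas \ref{dbvs} and \ref{LUVH}.

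The key step is then the bookkeeping identity. Writing $\Psi_{UV}(f \ot v) = v^{\Br{1}} \ot f^{\Br{2}}$ in the notation of Lemma \ref{LUVH} and $\Psi_{WH}(e \ot a) = a^{\Br{1}} \ot e^{\Br{2}}$ in the notation of Lemma \ref{dbvs}, both are characterized via \eqref{bUVW} and \eqref{bHV} by the single ``move to $H$ and $V$'' relations $f^{\Br{2}}(a)\, v^{\Br{1}} = f(a^{\Br{1}})\, v^{\Br{2}}$ (cf.\ \eqref{vfa}) and $e^{\Br{1}}(v)\, a^{\Br{2}} = e(v^{\Br{2}})\, a^{\Br{1}}$ (cf.\ \eqref{eva}), where on the right $\Psi_{VH}(v \ot a) = a^{\Br{1}} \ot v^{\Br{2}}$ is the \emph{given} braiding. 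Both $\Psi_{H'W}\!\!\upharpoonright_{U\ot W}$ and $\Psi_{UV'}\!\!\upharpoonright_{U\ot W}$, when evaluated on $v \ot a$ after the appropriate identifications, reduce to $\ipp{e \ot f}{\Psi_{VH}(v \ot a)} = e(a^{\Br{1}}\text{-slot})\, f(\cdots)$; I would verify that the two evaluation orders produce literally the same scalar $\ipp{e\ot f}{\Psi_{VH}(v\ot a)}$, so the two maps agree as elements of $W \ot U$. The bijectivity transfer is then immediate: if one of them is a bijection onto $W \ot U$, the other equals it and hence is the same bijection.

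For the second statement, the argument is the mirror image using the inverse-type braidings: $\Psi_{HW}^\circ$ from \eqref{cHV}, $\Psi_{UV}^\circ$ from \eqref{cPUV}, and the auxiliary maps $\Psi_{V'U}^{\bullet}$ and $\Psi_{WH'}^{\bullet}$ defined in \eqref{VUb}--\eqref{WHb}. The defining formulas there are arranged precisely so that $\Psi_{V'U}^{\bullet}(e \ot f)(a \ot v) = \ipp{a \ot e}{\Psi_{UV}^\circ(f \ot v)}$ and $\Psi_{WH'}^{\bullet}(e \ot f)(a \ot v) = \ipp{f \ot v}{\Psi_{HW}^\circ(a \ot e)}$ both expand, via \eqref{cHV} and \eqref{cPHV}, to $\ipp{\cdot}{\Psi_{VH}^{-1}(v \ot a)}$-type scalars; I would check they coincide by the same separation-of-points argument on $H \ot V$. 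The right $H$-braided vector space case follows by applying Lemma \ref{Linv} to swap the roles of left and right and of $\Psi$ with $\Psi^{-1}$, exactly as in the closing sentences of the previous two lemmas.

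The main obstacle I anticipate is purely notational rather than conceptual: keeping the four families of braidings ($\Psi_{UW}$, $\Psi_{WU}$, $\Psi_{UW}^{\bullet}$, $\Psi_{WU}^{\bullet}$) and their Sweedler indices straight, and making sure that each ``$\upharpoonright$'' restriction genuinely lands in the tensor product $W \ot U$ (or $U \ot W$) rather than merely in the bidual — this is where the non-degeneracy hypotheses on $U$ and $W$ and the standing bijectivity assumptions of Lemmas \ref{dbvs} and \ref{LUVH} must be invoked carefully. Once the evaluation pairing is set up correctly, the actual identity is a one-line comparison of two ways of contracting $\ipp{e \ot f}{\Psi_{VH}^{(\pm1)}(v \ot a)}$, so I would not expect any genuine difficulty beyond that.
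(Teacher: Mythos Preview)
Your proposal is correct and takes essentially the same approach as the paper: evaluate both candidate maps on a test element $v\ot a\in V\ot H$ and reduce each, via the defining formulas \eqref{bHV}, \eqref{bVpH}, \eqref{bHpV}, \eqref{bUVW}, to the same scalar built from the original braiding $\Psi_{HV}$ (respectively $\Psi_{HV}^{-1}$ for the bullet version). The paper's proof is precisely the explicit chain $\Psi_{UV'}(f\ot e)(v\ot a)=e(v^{\Br{2}})\,f^{\Br{1}}(a)=e(v^{\Br{1}})\,f(a^{\Br{2}})=e^{\Br{2}}(v)\,f(a^{\Br{1}})=\Psi_{H'W}(f\ot e)(v\ot a)$ that you outline; note only that for a \emph{left} $H$-braided $V$ the given braiding is $\Psi_{HV}$, not $\Psi_{VH}$, and that the right-$H$-braided case is handled by repeating the argument with opposite conventions rather than literally invoking Lemma~\ref{Linv}.
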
 
\begin{proof} 
The claim $\Psi_{H'W}\!\!\upharpoonright_{U\ot W} = \Psi_{UV'}\!\!\upharpoonright_{U\ot W}$ 
follows from the equation 
\begin{align*} 
\Psi_{UV'}(f\ot e)(v\ot a) 
&\oeq{\eqref{bHV}}  e(v^{\Br{2}}) \, f^{\Br{1}}(a) 
\oeq{\eqref{bVpH}}  e(v^{\Br{1}}) \, f(a^{\Br{2}}) 
\oeq{\eqref{bHpV}}   e^{\Br{2}}(v) \, f(a^{\Br{1}}) \\
&\oeq{\eqref{bUVW}} \Psi_{H'W}( f \ot e) (v \ot a) 
\end{align*} 
for $a\in H$, $v\in V$, $f\in U$ and $e\in W$. 
Similarly,  
\begin{align}  \nonumber
\Psi_{V'U}^{\bullet}(e\ot f)(a\ot v) 
&\oeq{\eqref{VUb}}  e(v^{\cBr{1}}) \, f^{\cBr{2}}\!(a) 
\oeq{\eqref{cPHV}}  e(v^{\pBr{2}}\,) \, f(a^{\pBr{1}}\,) 
\oeq{\eqref{cHV}}   e^{\cBr{1}}\!(v) \, f(a^{\cBr{2}}) \\ \nonumber 
&\oeq{\eqref{WHb}} \Psi_{WH'}^{\bullet}( e \ot f) (a \ot v) 
\end{align} 
implies that $\Psi_{V'U}^{\bullet}\!\!\upharpoonright_{W\ot U}
= \Psi_{WH'}^{\bullet}\!\!\upharpoonright_{W\ot U}$. 
The opposite versions are proven analogously. 
\end{proof} 

The next theorem shows how to transform a comodule $V$ of a coalgebra $H$ 
into a module of a dual algebra of $H$.  
Note  that we will use again a braiding that is constructed 
from the inverse of the given one as it happened in  Theorem \ref{dH}. 
The same observation can be made for succeeding results.   

\begin{theorem}   \label{mcom}
Let $H$ be a braided coalgebra and $V$ a braided right $H$-comodule 
with co\-action $\rho_R: V\to V\ot H$. 
Let $U\subset H'$ be a non-degenerate (unital) subalgebra with product given by \eqref{us}
such that $\Psi_{UU}$ and $\Psi_{UV}^\circ$ 
introduced in \eqref{pbbraid} and \eqref{cPUV}, respectively, are bijective.  
Then $V$ becomes a braided left $U$-module with respect to the 
braiding $\Psi_{UV}^\circ$ and the left action $\nu_L : U\ot V \to V$ given by 
\[  \label{nuL}
 \nu_L(f\ot v):=  \ip{f}{v_{(1)}{\!}^{\pBr{1}\,\hs}}\hs v_{(0)}{\!}^{\pBr{2}}\hs. 
\]
If $H$ is a braided bialgebra, $V$ is a braided right $H$-comodule algebra, 
and  $U$ is a left dual of $H$ as in Theorem \ref{dH}, 
then the action $\nu_L$ turns $V$ into a braided left $U$-module algebra.  

In case $V$ is a braided left $H$-comodule, the analogous statements for the opposite versions hold 
under homologous assumptions 
for the right action $\nu_R: V\ot U\to V$ given by 
\[ \label{nuR}
\nu_R(v\ot f):=  \ip{f}{v_{(-1)}{\!}^{\pBr{2}\,\hs}}\hs v_{(0)}{\!}^{\pBr{1}}\hs, 
\]
and with respect to the braiding $\Psi_{VU}^\circ$. 
 \end{theorem}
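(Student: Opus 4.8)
The key observation is that the action \eqref{nuL} is nothing but the left coaction $\rho_L^{\circ}:=\Psi_{HV}^{-1}\circ\rho_R$ of Proposition \ref{biprop} followed by the canonical evaluation, i.e.\ $\nu_L=\big(\ip{\cdot}{\cdot}\ot\id_V\big)\circ\big(\id_U\ot\rho_L^{\circ}\big)$. The plan is to dispose first of the braiding bookkeeping and then to run, in the braided Sweedler calculus of Section \ref{sec5}, the familiar argument that dualises a comodule (resp.\ comodule algebra) to a module (resp.\ module algebra). For the bookkeeping: a braided right $H$-comodule is in particular a left $H$-braided vector space whose braiding is compatible with the comultiplication of $H$ (and, in the comodule-algebra case, with the multiplications of $H$ and $V$); hence Proposition \ref{lust} with Lemma \ref{Linv} shows that $(U,\um)$ is a braided algebra with braiding $\Psi_{UU}$, while the $\circ$-version of Lemma \ref{LUVH} (the one built from $\Psi_{HV}^{-1}$, cf.\ \eqref{cPHV}--\eqref{cPUV}), again together with Lemma \ref{Linv}, shows that $\Psi_{UV}^{\circ}$ turns $V$ into a left $U$-braided vector space whose braiding is compatible with $\um$ (and, when $H$ is a bialgebra and $V$ an algebra, also with $m_V$ and with the coproduct $\uD$). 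It therefore remains to verify, for the first assertion, the module identities \eqref{anu} and \eqref{bnu}, and, for the second, the identities \eqref{num} and \eqref{nu1}.

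For \eqref{anu}, the unit law $\nu_L(1\ot v)=v$ follows from the fact that the counit $\eps$ of $H$ is the unit of $(U,\um)$ (Proposition \ref{lust}), the relation $(\eps\ot\id)\circ\Psi_{HV}^{-1}=\id\ot\eps$ supplied by Lemma \ref{Linv}, and $(\id\ot\eps)\circ\rho_R=\id$. Associativity is cleanest via Proposition \ref{biprop}, where $\rho_L^{\circ}$ is a braided left coaction for the coproduct $\Delta_{-1}=\Psi_{HH}^{-1}\circ\Delta$, combined with the remark that by \eqref{us} the product $\um$ on $U\subset H'$ is precisely the one dual to $\Delta_{-1}$ under $\ipp{\cdot}{\cdot}$: expanding $\nu_L(f\ot\nu_L(g\ot v))$, applying the coassociativity $(\id\ot\rho_L^{\circ})\circ\rho_L^{\circ}=(\Delta_{-1}\ot\id)\circ\rho_L^{\circ}$, and recombining by means of \eqref{us} yields $\nu_L(\um(f\ot g)\ot v)$, the braidings never leaving the ``boxes'' $\rho_L^{\circ}$ and $\um$. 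The braiding compatibility \eqref{bnu}, by contrast, requires a genuine computation: one expands both sides with the defining relations \eqref{cPHV}--\eqref{cPUV} of $\Psi_{UV}^{\circ}$ and \eqref{pbbraid} of $\Psi_{UU}$ and then moves the new crossings onto the spaces carrying the given braiding using \eqref{invPHV}, the coassociativity of $\rho_R$, and the Yang--Baxter equation, exactly in the style of the proofs of Lemmas \ref{dbvs}--\ref{bUW}.

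For the module-algebra statement, the unitality \eqref{nu1} is immediate from $\rho_R(1_V)=1_V\ot 1_H$ \eqref{rR1} and $\Psi_{HV}^{-1}(1_V\ot 1_H)=1_H\ot 1_V$ (a consequence of \eqref{1W}), which give $\nu_L(f\ot 1_V)=\ip{f}{1_H}\,1_V=\eps_U(f)\,1_V$. The substantive point is \eqref{num}: by Proposition \ref{biprop} the braided right $H$-comodule algebra $V$ becomes a braided left $H^{(0,-1)}$-comodule algebra for $\rho_L^{\circ}$, and combining that compatibility of $\rho_L^{\circ}$ with $m_V$ with the dual-pairing relation \eqref{Dm} — equivalently the formula \eqref{brcop} for $\uD$, as used in the proof of Theorem \ref{dH} — reproduces \eqref{num} exactly as in the unbraided dualisation of a comodule algebra to a module algebra; the braidings $\Psi_{HH}^{-1}$ and $\Psi_{HV}^{-1}$ are absorbed into $\um$, $\uD$ and $\rho_L^{\circ}$, and the crossing $\Psi_{UV}^{\circ}$ on the right of \eqref{num}, together with the coproduct $\uD(f)$, is produced by \eqref{cphh} and \eqref{cPHV}. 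The opposite version, for a braided left $H$-comodule $V$ with right action \eqref{nuR}, is obtained by repeating all of this with $\rho_R$, $\Psi_{HV}^{-1}$ and $\Psi_{UV}^{\circ}$ replaced by $\rho_L$, $\Psi_{VH}^{-1}$ and $\Psi_{VU}^{\circ}$, and with the last part of Proposition \ref{biprop} in place of the first.

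The step I expect to be the main obstacle is \eqref{num} (and, to a lesser extent, \eqref{bnu}): although conceptually it is only ``dualise \eqref{rhoRm} by means of \eqref{Dm}'', an honest verification requires tracking the chronological order of the several nested copies of $\Psi_{HH}^{-1}$ and $\Psi_{HV}^{-1}$ that occur and, at each step, invoking the correct handedness and the correct ($\circ$ or plain) variant of the relevant compatibility relations and of Lemmas \ref{dbvs}--\ref{bUW}; it is this bookkeeping, rather than any single identity, that makes the computation delicate.
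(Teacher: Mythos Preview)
Your proposal is correct and takes a genuinely different, more structural route than the paper. The paper verifies each of \eqref{anu}, \eqref{bnu}, \eqref{num}, \eqref{nu1} by direct Sweedler-type computations, never passing through Proposition~\ref{biprop}; you instead factor $\nu_L$ as evaluation followed by $\rho_L^{\circ}=\Psi_{HV}^{-1}\circ\rho_R$ and exploit that $\rho_L^{\circ}$ is a left $H^{(0,-1)}$-coaction (resp.\ comodule-algebra structure). This buys you a clean proof of associativity in \eqref{anu}: since $\um$ in \eqref{us} is exactly dual to $\Delta_{-1}$ under $\ipp{\cdot}{\cdot}$, associativity becomes the bare coassociativity of $\rho_L^{\circ}$, with no braidings to chase. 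For \eqref{num} the factorisation likewise reduces the task to ``pair the left-comodule-algebra identity for $\rho_L^{\circ}$ against $\uD$ via \eqref{Dm}'', which is conceptually cleaner, though, as you correctly anticipate, the bookkeeping of the several inverse braidings is comparable to the paper's direct computation. For \eqref{bnu} your sketch essentially coincides with the paper's computation (note that coassociativity of $\rho_R$ is not actually needed there; the paper uses only \eqref{invPHV}, \eqref{WV}, and \eqref{brpbr}). Overall your approach trades computational length in the module axioms for one extra invocation of Proposition~\ref{biprop}, while the paper keeps everything self-contained at the cost of longer Sweedler chains.
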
 
\begin{proof} 
From Proposition \ref{lust} and Lemma \ref{LUVH}, 
we conclude that $U$ is a braided algebra with respect to braiding $\Psi_{UU}$ 
and $V$ is a left $U$-braided vector space 
such that the braiding $\Psi_{UV}^\circ$ is compatible with the multiplication of $U$. 
To show that the action $\nu_L$ equips $V$ with the structure of a left braided $U$-module, 
we need to prove \eqref{anu} and \eqref{bnu}. 

Let $f,g\in U$ and $v\in V$. Using the notation \eqref{na}, we compute that 
\begin{align*}
&f\la(g\la v)  \oeq{\eqref{nuL}} 
g(v_{(1)}{\!}^{\pBr{1}_1}) \, f(v_{(0)}{\!}^{\pBr{2}_1}{\hsp}_{(1)}{\hsp}^{\pBr{1}_2}) \, 
v_{(0)}{\!}^{\pBr{2}_1}{}_{(0)}{\!}^{\pBr{2}_2} \\ 
&\oeq{\eqref{rohD},\eqref{invPHV} } 
g(v_{(2)}{\!}^{\pbr{1}\pBr{1}_1}) \, f(v_{(1)}{\!}^{\pbr{2}\pBr{1}_2}) \, v_{(0)}{\!}^{\pBr{2}_1\pBr{2}_2} 
 \oeq{\eqref{WV}} 
 g(v_{(2)}{\!}^{\pBr{1}_2\pbr{1}}) \, f(v_{(1)}{\!}^{\pBr{1}_1\pbr{2}}) \, v_{(0)}{\!}^{\pBr{2}_1\pBr{2}_2} \\
 &\oeq{\eqref{PVH}} 
 g(v_{(1)}{\!}^{\pBr{1}}{\!}_{(2)}{\!}^{\pbr{1}}) \, f(v_{(1)}{\!}^{\pBr{1}}{\!}_{(1)}{\!}^{\pbr{2}}) \, v_{(0)}{\!}^{\pBr{2}} 
 \,\oeq{\eqref{us}} f\ust g(v_{(1)}{\!}^{\pBr{1}}\,) \, v_{(0)}{\!}^{\pBr{2}} 
  \,\oeq{\eqref{nuL}} (f\ust g) \la v. 
\end{align*} 
This yields \eqref{anu}. 

Note that, since $V'$ separates the points of $V$, 
Equations \eqref{cPHV}   and \eqref{cPUV} 
give 
\[  \label{fav}
f^{\cBr{2}}\! (a)\, v^{\cBr{1}} =  f(a^{\pBr{1}\,\hs})\, v^{\pBr{2}}\, , \quad f\in U,\ v\in V, \ a\in H. 
\]
Therefore, for $f,g\in U$, $v\in V$ and $a\in H$, 
\begin{align*}
&f^{\br{2}\cBr{2}\!\hsp}(a)\hs (g^{\br{1}}\la v^{\cBr{1}\!}) \oeq{\eqref{nuL}} 
f^{\br{2}\cBr{2}\!\hsp}(a)\, g^{\br{1}\hsp}(v^{\cBr{1}\!}{\!}_{(1)}{\!}^{\pBr{1}}\,)\, v^{\cBr{1}\!}{\!}_{(0)}{\!}^{\pBr{2}} \\
&\oeq{\eqref{fav}} 
f^{\br{2}}(a^{\pBr{1}_1})\, g^{\br{1}}(v^{\pBr{2}_1}{\hsp}_{(1)}{\!}^{\pBr{1}_2})\, 
v^{\pBr{2}_1}{\hsp}_{(0)}{\!}^{\pBr{2}_2}\,
\oeq{\eqref{invPHV}} 
f^{\br{2}}(a^{\pbr{1}\pBr{1}_1}\,)\, g^{\br{1}}(v_{(1)}{\!}^{\pbr{2}\pBr{1}_2}\,)\, v_{(0)}{\!}^{\pBr{2}_1\pBr{2}_2}\\
& \oeq{\eqref{WV}} 
f^{\br{2}}(a^{\pBr{1}_2 \pbr{1}})\, g^{\br{1}}(v_{(1)}{\!}^{\pBr{1}_1\pbr{2}})\, v_{(0)}{\!}^{\pBr{2}_1\pBr{2}_2} 
 \oeq{\eqref{brpbr},\eqref{fagb}} 
 f(a^{\pBr{1}_2 })\, g(v_{(1)}{\!}^{\pBr{1}_1})\, v_{(0)}{\!}^{\pBr{2}_1\pBr{2}_2} \\
 & \oeq{\eqref{nuL}} 
  f(a^{\pBr{1}}\,)\,  (g\la v)^{\pBr{2}} \, 
  \oeq{\eqref{fav}}  f^{\cBr{2}\!}(a)\, (g\la v)^{\cBr{1}} , 
\end{align*} 
which implies \eqref{bnu}. 

Now assume that 
$H$ is a braided bialgebra, $V$ is a braided right $H$-comodule algebra, and  $U$ is a left dual of $H$. 
Then, for $u,v\in V$ and $f\in U$, 
\begin{align*}
&(f_{(1)}\la u^{\cBr{1}\!}) \hs (f_{(2)}{\!}^{\cBr{2}\!}\la v) \oeq{\eqref{nuL}} 
f_{(1)}( u^{\cBr{1}\!}{\!}_{(1)}{\!}^{\pBr{1}_2})\, f_{(2)}{\!}^{\cBr{2}\!}( v_{(1)}{\!}^{\pBr{1}_1})\, 
u^{\cBr{1}\!}{\!}_{(0)}{\!}^{\pBr{2}_2} \, v_{(0)}{\!}^{\pBr{2}_1} \\
&\oeq{\eqref{fav}} 
f_{(1)}( u^{\pBr{2}_2}{}_{(1)}{\!}^{\pBr{1}_3})\, f_{(2)}( v_{(1)}{\!}^{\pBr{1}_1 \pBr{1}_2})\,
u^{\pBr{2}_2}{}_{(0)}{\!}^{\pBr{2}_3} \,\, v_{(0)}{\!}^{\pBr{2}_1} \\
&\oeq{\eqref{invPHV}} 
f_{(1)}( u_{(1)}{\!}^{\pbr{2}\pBr{1}_3})\, f_{(2)}( v_{(1)}{\!}^{\pBr{1}_1\pbr{1} \pBr{1}_2})\, 
u_{(0)}{\!}^{\pBr{2}_2\pBr{2}_3} \,\, v_{(0)}{\!}^{\pBr{2}_1} \\
&\oeq{\eqref{WV}} 
f_{(1)}( u_{(1)}{\!}^{\pBr{1}_2\pbr{2}})\, f_{(2)}( v_{(1)}{\!}^{\pBr{1}_1\pBr{1}_3\pbr{1}} )\, 
u_{(0)}{\!}^{\pBr{2}_2\pBr{2}_3} \,\, v_{(0)}{\!}^{\pBr{2}_1} \\
&\oeq{\eqref{brpbr},\eqref{brcop}}  
f( u_{(1)}{\!}^{\pBr{1}_2} \, v_{(1)}{\!}^{\pBr{1}_1\pBr{1}_3})\,u_{(0)}{\!}^{\pBr{2}_2\pBr{2}_3}\,v_{(0)}{\!}^{\pBr{2}_1} 
\oeq{\eqref{VAm}} 
f\big( (u_{(1)} \hs v_{(1)}{\!}^{\pBr{1}_1})^{\pBr{1}_2}\big) 
u_{(0)}{\!}^{\pBr{2}_2} \,v_{(0)}{\!}^{\pBr{2}_1} \\
& \oeq{\eqref{ipBr}} \!  
f\big(\hsp (u_{(1)}{\!}^{\Br{2}\pBr{1}_1} \hs v_{(1)}{\!}^{\pBr{1}_2})^{\pBr{1}_3}\hsp\big) 
u_{(0)}{\!}^{\pBr{2}_3} \hs v_{(0)}{\!}^{\Br{1}\pBr{2}_1 \pBr{2}_2} 
 \oeq{\eqref{VAm}}  
 f\big(\hsp (u_{(1)}{\!}^{\Br{2}} \hs v_{(1)})^{\pBr{1}_1\pBr{1}_2}\hsp\big) 
u_{(0)}{\!}^{\pBr{2}_2} \hs v_{(0)}{\!}^{\Br{1}\pBr{2}_1} \\
&  \oeq{\eqref{AWm}}  
f\big( (u_{(1)}{\!}^{\Br{2}} \hs v_{(1)})^{\pBr{1}\,}\hs\big) 
(u_{(0)} \,v_{(0)}{\!}^{\Br{1}})^{\pBr{2}\,} 
 \oeq{\eqref{rhoRm}}  
 f\big( (uv)_{(1)}{\!}^{\pBr{1}\,}\hs\big) (uv)_{(0)}{\!}^{\pBr{2}\,} 
 \oeq{\eqref{nuL}}  f\la (uv). 
\end{align*} 
This proves \eqref{num}. 
Furthermore, if $1\in V$, then 
$f\la 1 \oeq{\eqref{rR1}} f(1^{\pBr{1}\,\hs}) \, 1^{\pBr{2}}  \oeq{\eqref{1W}} f(1) \, 1 = \eps(f)\,1$. 
Since, by Lemma~\ref{LUVH}, 
$\Psi_{UV}^\circ$ is compatible with the comultiplication of $U$ and with 
the multiplications of $U$ and $V$, 
we conclude that $V$ is a braided left $U$-module algebra.

The opposite versions are shown analogously. 
\end{proof} 

Let $H$ be a braided bialgebra and $U$ a left dual of $H$. 
Since $U\subset H'$ is an algebra, there is a natural left (resp.\ right) $U$-action on $H$ 
given by right (resp.\ left) multiplication on $U$. On the other hand, the coproduct on $H$ 
equips $H$ trivially with the structure of a right (resp.\ left) $H$-comodule so that we may consider the 
left (resp.\ right) $U$-action on $H$ described in Theorem~\ref{mcom}. 
The next corollary shows that these actions coincide and turn $H$ into a 
left (resp.\ right) $U$-module algebra. Similar results hold for $U$ and $H$ interchanged. 
\begin{corollary} \label{nat}
Let $H$ be a braided bialgebra and let $U$ be a left dual of $H$ 
as in Theorem~\ref{dH}. 
Then the natural left $U$-action \,$\la: U\ot H \rightarrow H$ defined by 
 $f(g\la a):= (f g)(a)$, $a\in H$, $f,g\in U$, 
satisfies 
\[ \label{glaa}
g\la a=  \ip{g}{a_{(2)}{\!}^{\pbr{1}\,\hsp}}\hs a_{(1)}{\!}^{\pbr{2}}
\] 
and turns $H$ into a left $U$-module algebra. 
The natural right $U$-action \,$\ra: H\ot U \rightarrow H$  defined by  
$g(a \ra f):= (f g)(a)$  
satisfies 
\[ \label{graa}
a \ra f =  \ip{f}{a_{(1)}{\!}^{\pbr{2}\,\hsp}}\hs a_{(2)}{\!}^{\pbr{1}}
\]
and  turns $H$ into a right $U$-module algebra. 

The same formulas with $a\in U$, $f,g\in H$ 
hold for the natural left (resp.\ right) $H$-action on $U$ 
and  turn $U$ into a left (resp.\ right) $H$-module algebra.  
\end{corollary}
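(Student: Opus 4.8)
\textbf{Proof plan for Corollary \ref{nat}.}

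The plan is to verify the two formulas \eqref{glaa} and \eqref{graa} directly from the defining relations of the natural actions, and then to recognize these formulas as instances of the action $\nu_L$ (resp.\ $\nu_R$) from Theorem \ref{mcom} applied to $H$ regarded as a comodule over itself. First I would recall that $H$ is a right $H$-comodule via $\rho_R := \Delta$, that this coaction is compatible with the braiding $\Psi_{HH}$ by \eqref{PDD} (indeed $V = H$ with $\rho_R = \Delta$ satisfies \eqref{rohD} and \eqref{brR} because of the coalgebra axioms and \eqref{PHW}), and that since $V = H$ we have $V' = H'$, so $U \subset H' = V'$ automatically satisfies the hypotheses of Theorem \ref{mcom}; moreover the induced braidings $\Psi_{UH}^\circ$ on $U \ot H$ (playing the role of $\Psi_{UV}^\circ$) coincide with those already constructed in Lemma \ref{lembraided}, by Lemma \ref{bUW}. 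Consequently Theorem \ref{mcom} yields a braided left $U$-module structure on $H$ given by
\[
\nu_L(g\ot a) = \ip{g}{a_{(1)}{\!}^{\pBr{1}}}\hs a_{(0)}{\!}^{\pBr{2}}
= \ip{g}{a_{(2)}{\!}^{\pbr{1}}}\hs a_{(1)}{\!}^{\pbr{2}},
\]
where in the last step I used $\rho_R = \Delta$ (so $a_{(0)} = a_{(1)}$, $a_{(1)} = a_{(2)}$ in the comodule Sweedler notation) and the identification of the $V$-$H$ braiding $\Psi_{VH}^{-1} = \Psi_{HH}^{-1}$, whose inverse-braiding Sweedler symbol $\pBr{\cdot}$ then becomes the $\pbr{\cdot}$ of \eqref{bpr}. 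This is exactly the right-hand side of \eqref{glaa}, and Theorem \ref{mcom} additionally tells us that $H$, being a braided right $H$-comodule algebra (the compatibility \eqref{rhoRm} for $\rho_R = \Delta$ is precisely \eqref{Dcm}), becomes a braided left $U$-module algebra under this action.

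It therefore remains to show that the action $\nu_L$ above agrees with the natural action defined by $f(g\la a) = (fg)(a)$. For this I would pair both candidate expressions against an arbitrary $f \in U$ and compare. On one side, $f\big(\nu_L(g\ot a)\big) = \ip{g}{a_{(2)}{\!}^{\pbr{1}}}\,\ip{f}{a_{(1)}{\!}^{\pbr{2}}} = \ipp{f\ot g}{\Psi_{HH}^{-1}\circ \Delta(a)}$, which by the definition \eqref{us} of the product $\um$ on $U$ is exactly $\um(f\ot g)(a) = (fg)(a)$. Since $H$ separates the points of $U$ (equivalently, $U$ is non-degenerate), this equality of pairings forces $\nu_L(g\ot a) = g\la a$, proving \eqref{glaa}; and then the module-algebra property of $\la$ is inherited from that of $\nu_L$. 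The computation for the right action \eqref{graa} is the mirror image: one uses the opposite version of Theorem \ref{mcom} (or the right-handed version of Proposition \ref{lust} and the symmetry $\ip{a}{f} = \ip{f}{a}$), regarding $H$ as a \emph{left} $H$-comodule via $\rho_L = \Delta$, and matches the resulting $\nu_R$ against the natural right action $g(a\ra f) = (fg)(a)$ by the same pairing argument.

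Finally, the last sentence of the corollary — the same formulas with the roles of $H$ and $U$ exchanged — follows by symmetry: $U$ is a braided bialgebra (Theorem \ref{dH}), $H$ embeds in $U'$ as a left dual of $U$ (Proposition \ref{propdual}), and the whole argument above applies verbatim with $(H, U)$ replaced by $(U, H)$. I expect no genuine obstacle here; the only point requiring care is bookkeeping — correctly tracking which braiding ($\Psi_{HH}$ vs.\ $\Psi_{HH}^{-1}$) and which Sweedler decoration ($\pBr{\cdot}$, $\pbr{\cdot}$, $\cBr{\cdot}$) appears where, so that the specialization of Theorem \ref{mcom}'s formula \eqref{nuL} to the self-comodule $\rho_R = \Delta$ produces precisely \eqref{glaa} and not one of the twisted variants $H^{(n,-n)}$ discussed in Section \ref{sec3}.
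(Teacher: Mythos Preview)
Your proposal is correct and follows essentially the same route as the paper: set $V:=H$ with $\rho_R=\Delta$ in Theorem~\ref{mcom} to obtain the braided left $U$-module algebra structure, then identify the resulting $\nu_L$ with the natural action by pairing against an arbitrary $f\in U$. The paper phrases the identification via \eqref{mD} and \eqref{cphh} rather than via \eqref{us}, and for the $H$-actions on $U$ it cites \eqref{Dm} directly instead of invoking Proposition~\ref{propdual}, but these are equivalent formulations; two cosmetic slips in your write-up are that the non-degeneracy you need is that $U$ separates points of $H$ (you wrote the reverse), and the compatibility \eqref{brR} for $\rho_R=\Delta$ is \eqref{PVH} rather than \eqref{PHW}.
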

\begin{proof} 
Setting $V:=H$ in Theorem \ref{mcom} shows that the right-hand side in \eqref{glaa} (resp.\ \eqref{graa})  
defines a left (resp.\ right) $U$-action on $H$ such that $H$ becomes a left (resp.\ right) $U$-module algebra. 
The equalities in \eqref{glaa} and \eqref{graa}  follow from 
\eqref{mD} for the braiding $\Psi_{UH}^\circ$ by applying \eqref{cphh}. 
The proof for the $H$-actions on $U$ is identical but 
uses  \eqref{Dm} instead of \eqref{mD}. 
\end{proof}

Theorem \ref{mcom} shows how to turn a comodule of a braided coalgebra into a module 
of an appropiate dual algebra. The dual construction corresponds to turning a module of a braided 
algebra into a comodule of a dual coalgebra.  
This will be done in the next theorem. 
Similar to Proposition \ref{lcop}, we will need an additional  
condition to ensure that the coaction belongs to the correct (algebraic) tensor product. 
\begin{theorem}   \label{mmm}
Let $H$ be a braided unital algebra and $V$ a braided left $H$-module 
with action \mbox{$\nu_L : H\ot V\to V$} denoted by $\nu_L(a\ot v) := a \la v$.  
Let $U$ be a non-degenerate subcoalgebra of $H^{\uc}$ as defined in Proposition \ref{lcop} 
such that $\Psi_{UU}$ and $\Psi_{UV}^\circ$
given in \eqref{pbbraid} and \eqref{cPUV}, respectively, are bijective.  
Consider the linear map 
\[  \label{rR} 
\rho_R : V \lra (V'\ot H)', \quad \rho_R(v)(e\ot a) := e(a^{\pBr{1}}\la v^{\pBr{2}\,\hs}),  
\]
where $v\in V$, $a\in H$ and $e\in V'$. If 
\[ \label{rRVU}
\rho_R : V \lra V\ot U \subset (V'\ot H)', 
\]
then $\rho_R$ yields a right $U$-coaction on $V$ such that $V$ becomes a 
braided right $U$-comodule with respect to 
the braiding $\Psi_{UV}^\circ$ and 
the coproduct $\uD_{\hs -2}:= \Psi_{UU}^{-2}\circ \uD$ on $U$, where 
$\uD$ denotes the coproduct 
introduced in \eqref{brcop}.  

If $H$ is a braided bialgebra, $V$ is a braided left $H$-module algebra, 
and  $U$ is a left dual of $H$ as in Theorem \ref{dH}, 
then the coaction $\rho_R$ turns $V$ into a braided right 
$U^{(2,-2)}$-comodule algebra.  

In case $V$ is a braided right $H$-module, then  
the analogous statements hold under homologous assumptions for the opposite versions 
with respect to the left coaction determined by 
\[ \label{muR}
\rho_L: V\lra U\ot V\subset (H\ot V')', \quad \rho_L(v)(a\ot e) :=  e(v^{\pBr{1}}\ra a^{\pBr{2}\,\hs}), 
\]
and again for 
the braided coalgebra $(U,\uD_{\hs -2},\eps)$ and the braided bialgebra $U^{(2,-2)}$. 
 \end{theorem}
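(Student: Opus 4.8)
The plan is to dualise, step by step, the argument used for Theorem~\ref{mcom}, interchanging the roles of action and coaction. First I would collect the structural facts already available: by Proposition~\ref{lcop} the subcoalgebra $U\subset H^{\uc}$ is a braided coalgebra with respect to $\Psi_{UU}$ with coproduct $\uD$ from \eqref{brcop}; by Lemma~\ref{LUVH} the braiding $\Psi_{UV}^\circ$ of \eqref{cPUV} turns $V$ into a left $U$-braided vector space which is compatible with $\uD$ and, in the bialgebra case, with the multiplications of $U$ and of $V$; and by Proposition~\ref{lemmD} all of this survives the passage from $\uD$ to $\uD_{-2}=\Psi_{UU}^{-2}\circ\uD$, so that $(U,\uD_{-2},\eps)$ is again a braided coalgebra, $\Psi_{UV}^\circ$ is compatible with $\uD_{-2}$, and, once $H$ is a braided bialgebra, $U^{(2,-2)}$ is a braided bialgebra with the unchanged braiding $\Psi_{UU}$. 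The two elementary tools for all the computations are the evaluation identity $f^{\cBr{2}}(a)\,v^{\cBr{1}}=f(a^{\pBr{1}\,})\,v^{\pBr{2}}$ for $f\in U$, $v\in V$, $a\in H$, which holds verbatim here by \eqref{cPHV}--\eqref{cPUV} (this is \eqref{fav}), and the Sweedler rewriting of \eqref{rR}: under the standing hypothesis $\rho_R(v)\in V\ot U$, writing $\rho_R(v)=v_{(0)}\ot v_{(1)}$, the defining formula reads $e(v_{(0)})\,v_{(1)}(a)=e(a^{\pBr{1}}\la v^{\pBr{2}})$ for all $e\in V'$ and $a\in H$, which, since $V'$ separates the points of $V$, is equivalent to $v_{(0)}\,v_{(1)}(a)=a^{\pBr{1}}\la v^{\pBr{2}}$ in $V$.

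Next I would verify that $\rho_R$ makes $V$ a braided right $U$-comodule, i.e.\ that \eqref{rohD} and \eqref{brR} hold. The counit relation $(\id\ot\eps)\circ\rho_R=\id$ is immediate: evaluating on $e\ot 1$ and using \eqref{1W} for $\Psi_{HV}$ together with the unitality in \eqref{anu} gives $\rho_R(v)(e\ot 1)=e(1\la v)=e(v)$. For coassociativity with respect to $\uD_{-2}$ I would evaluate both sides of $(\rho_R\ot\id)\circ\rho_R=(\id\ot\uD_{-2})\circ\rho_R$ on a test element $e\ot a\ot b\in V'\ot H\ot H$, unfold with the Sweedler form of $\rho_R$ and the coproduct formula \eqref{brcop}, apply the associativity axiom $\nu_L\circ(\id\ot\nu_L)=\nu_L\circ(m\ot\id)$ of \eqref{anu}, and push the two $\Psi_{HV}^{-1}$-crossings past the $\Psi_{HH}^{-1}$-crossing produced by $m$ using \eqref{nulinv}, \eqref{WV} and the Yang--Baxter identity \eqref{YBe}; the two crossings that are left over are exactly those packaged into $\uD_{-2}=\Psi_{UU}^{-2}\circ\uD$, which is where the power $-2$ is forced. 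This is the mirror image of the computation $f\la(g\la v)=(f\ust g)\la v$ in the proof of Theorem~\ref{mcom}. The braiding compatibility \eqref{brR} of $\rho_R$ with $\Psi_{UV}^\circ$ and $\Psi_{UU}$ is obtained in the same style: evaluate on a test element, use \eqref{fav}, the module compatibility \eqref{bnu} in its inverse form \eqref{nulinv}, and rearrange the crossings by \eqref{WV}, \eqref{brpbr} and \eqref{fagb}.

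Finally, assume $H$ is a braided bialgebra, $V$ a braided left $H$-module algebra and $U$ a left dual of $H$. It remains to check the comodule-algebra compatibility \eqref{rhoRm} between $\rho_R$ and $m_V$ (for the braiding $\Psi_{UV}^\circ$ and the product of $U^{(2,-2)}$) together with $\rho_R(1)=1\ot 1$ when $1\in V$. The unit statement is short: by \eqref{V1} for $\Psi_{HV}$ one has $a^{\pBr{1}}\ot 1^{\pBr{2}}=a\ot 1$, so $\rho_R(1)(e\ot a)=e(a\la 1)=\eps(a)\,e(1)$ by \eqref{nu1}, which is $(1\ot\eps)(e\ot a)$, and $\eps$ is the unit of $U^{(2,-2)}$. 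For \eqref{rhoRm} I would evaluate both sides on $e\ot a\ot b\in V'\ot H\ot H$ and run a Sweedler computation that is the transpose of the verification of \eqref{num} in the proof of Theorem~\ref{mcom}: feed in the module-algebra axiom \eqref{num}, use the multiplicative compatibilities of $\Psi_{HV}$ with $m_H$ and $m_V$ (\eqref{AWm}, \eqref{VAm}), the defining formulas \eqref{us}, \eqref{brcop} for the product and coproduct on $U$, and the bookkeeping identities \eqref{fav}, \eqref{nulinv}, \eqref{WV}, \eqref{brpbr}, \eqref{ipBr}; the crossings that survive are absorbed precisely by the twists built into the coproduct $\uD_{-2}$ and the product of $U^{(2,-2)}$. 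Combined with the compatibilities of $\Psi_{UV}^\circ$ with the multiplications of $U$ and $V$ and with $\uD_{-2}$ (Proposition~\ref{lemmD}, Lemma~\ref{LUVH}), this shows that $V$ is a braided right $U^{(2,-2)}$-comodule algebra. The opposite version, with a braided right $H$-module $V$ and the left coaction $\rho_L$ of \eqref{muR}, follows from the same computations with left and right interchanged. The main obstacle is this last Sweedler computation (and its counterpart for coassociativity): the whole content is the careful bookkeeping of braiding crossings, and it is exactly that bookkeeping which pins down the power $-2$ and hence forces the appearance of $U^{(2,-2)}$ rather than $U$.
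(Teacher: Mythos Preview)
Your proposal is correct and follows essentially the same route as the paper: the same structural preliminaries from Proposition~\ref{lcop}, Lemma~\ref{LUVH} and Proposition~\ref{lemmD}, the same Sweedler rewriting $v_{(0)}\,v_{(1)}(a)=a^{\pBr{1}}\la v^{\pBr{2}}$, and the same strategy of verifying \eqref{rohD}, \eqref{brR} and \eqref{rhoRm} by direct braiding bookkeeping. One small point worth making explicit (the paper does so as \eqref{vust}) is the identity $\um_{2}=\,\ust$, which lets you recognise the product of $U^{(2,-2)}$ as the convolution \eqref{ust} and thereby cleanly absorb the surviving crossings in the \eqref{rhoRm} computation; also, for \eqref{rhoRm} the test element should be $e\ot a\in V'\ot H$ rather than $e\ot a\ot b$.
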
 
\begin{proof}
As shown in Proposition \ref{lcop}, the braiding $\Psi_{UU}$ 
equips $(U, \uD, \eps)$ with the structure of a braided coalgebra. 
From Lemma \ref{LUVH}, 
we know that $\Psi_{UV}^\circ$ turns $V$ into a left $U$-braided 
vector space such that the braiding is compatible with the comultiplication of $U$ 
and, if defined, with the multiplications of $U$ and $V$.
Furthermore, by Proposition \ref{lemmD}, 
the compatibility conditions are also fulfilled with respect to  
the modified coproduct $\uD_{\hs -2}=\Psi_{UU}^{-2}\circ  \uD$ 
and the modified product $m_2= m\circ \Psi_{UU}^{2}$. 

Throughout this proof, let  $u,v\in V$, $a,b\in H$ and $f,g\in U$.
To show that $\rho_R$ turns $V$ into a braided right $U$-module, 
it remains to prove \eqref{rohD} and \eqref{brR}. 
First note that, since $V'$ separates the points of $V$, 
\eqref{rR}  is equivalent to 
\[    \label{SrR} 
v_{(0)}\, v_{(1)}\hsp(a) = a^{\pBr{1}}\la v^{\pBr{2}\,\hs}.  
\]
Moreover, to distinguish between $\uD_{\hs -2}$ and 
coproduct $\uD$ on $U$ defined in Proposition \ref{lcop}, 
we will employ the Sweedler notation 
$\uD_{\hs -2}(f) := f_{(1)^{\backprime}} \ot f_{(2)^{\backprime}}$. 
Then it follows from \eqref{UU}, \eqref{pbbraid} and \eqref{brcop} that 
\[ \label{Swp}
f_{(1)^{\backprime}}(b)\, f_{(2)^{\backprime}}(a)  = f(b^{\pbr{1}} a^{\pbr{2}}). 
\] 
Using  Lemma \ref{Linv} for the relations concerning the inverse braiding, we obtain 
\begin{align*} 
&(v_{(0)})_{(0)}\, v_{(1)}\hsp(a)\, (v_{(0)})_{(1)}\hsp(b) \oeq{\eqref{SrR}} 
b^{\pBr{1}_2}\hsp\la\hsp (a^{\pBr{1}_1}\hsp\la\hsp v^{\pBr{2}_1})^{\pBr{2}_2} 
\oeq{\eqref{nulinv}} b^{\pBr{1}_2\pbr{1}}\hsp\la\hsp (a^{\pBr{1}_1\pbr{2}}\hsp\la\hsp v^{\pBr{2}_1\pBr{2}_2}) \\
&\oeq{\eqref{VW}} 
b^{\pbr{1}\pBr{1}_1}\hsp\la\hsp (a^{\pbr{2}\pBr{1}_2}\hsp\la\hsp v^{\pBr{2}_1\pBr{2}_2}) 
\oeq{\eqref{anu}} (b^{\pbr{1}\pBr{1}_1}a^{\pbr{2}\pBr{1}_2})\hsp\la\hsp v^{\pBr{2}_1\pBr{2}_2} 
\oeq{\eqref{AWm}} 
 (b^{\pbr{1}}a^{\pbr{2}})^{\pBr{1}}\hsp\la\hsp v^{\pBr{2}} \\
 &\oeq{\eqref{SrR}} v_{(0)}\, v_{(1)}\hsp(b^{\pbr{1}}a^{\pbr{2}}) 
 \oeq{\eqref{Swp}}  v_{(0)}\, \hs(v_{(1)})_{(1)^{\backprime}}\hsp (b) \, \hs(v_{(1)})_{(2)^{\backprime}}\hsp(a). 
\end{align*} 
Therefore 
$$
v_{(0)}\ot (v_{(1)})_{(1)^{\backprime}} \ot (v_{(1)})_{(2)^{\backprime}}=(v_{(0)})_{(0)}\ot   (v_{(0)})_{(1)} \ot  v_{(1)}  
=: v_{(0)} \ot v_{(1)} \ot v_{(2)}, 
$$
which shows the first relation of \eqref{rohD}. 
The second relation of \eqref{rohD} follows 
from the definition of $\vare$ in Proposition \ref{lcop} and 
$$
v_{(0)}\, \vare(v_{(1)} ) = v_{(0)}\, v_{(1)}\hsp(1) \oeq{\eqref{SrR}}  1^{\pBr{1}}\la v^{\pBr{2}\,\hs} 
\oeq{\eqref{1W}}1\la v \oeq{\eqref{anu}} v. 
$$
Furthermore, 
\begin{align*} 
& v_{(0)}{\!}^{\cBr{1}} \, v_{(1)}{\!}^{\br{1}}\hsp(b)\, f^{\cBr{2}\br{2}}\hsp(a) 
\oeq{\eqref{phh}}  
v_{(0)}{\!}^{\cBr{1}} \, v_{(1)}\hsp(b^{\br{1}})\, f^{\cBr{2}}\hsp(a^{\br{2}}) 
\oeq{\eqref{fav}} v_{(0)}{\!}^{\pBr{2}} \, v_{(1)}\hsp(b^{\br{1}})\, f^{}\hsp(a^{\br{2}\pBr{1}\,\hs})  \\
&\oeq{\eqref{SrR}} (b^{\br{1}\pBr{1}_1}  \la  v^{\pBr{2}_1})^{\pBr{2}_2}\, \, f(a^{\br{2}\pBr{1}_2}) 
\oeq{\eqref{nulinv}}   (b^{\br{1}\pBr{1}_1 \pbr{2}}  \la  v^{\pBr{2}_1\pBr{2}_2}) \, f(a^{\br{2}\pBr{1}_2 \pbr{1}}) \\
&\oeq{\eqref{VW},\eqref{brpbr}} (b^{\pBr{1}_2 }  \la  v^{\pBr{2}_1\pBr{2}_2}) \, f(a^{\pBr{1}_1}) 
\oeq{\eqref{SrR}}   v^{\pBr{2}}{\!}_{(0)}\, v^{\pBr{2}}{\!}_{(1)}\hsp(b)\,  f(a^{\pBr{1}}\,) 
\oeq{\eqref{fav}} v^{\cBr{1}\!}{\!}_{(0)}\,  v^{\cBr{1}\!}{\!}_{(1)}\hsp(b) \,f^{\cBr{2}\!}\hsp(a), 
\end{align*} 
which proves \eqref{brR}. Hence $\rho_R$ turns $V$ into a braided right $U$-module. 

Now let $H$ be a braided bialgebra, $V$ a braided right $H$-module algebra,  
and $U$ a left dual of $H$ as in Theorem \ref{dH}. 
The compatibility of the braiding $\Psi_{UV}^\circ$ with the 
(modified) comultiplication on $U$ and the (modified) multiplications on $U$ and $V$ 
has been discussed in the beginning of the proof. 
To show that $V$ becomes a braided right $U^{(2,-2)}$-comodule algebra, we need 
to verify \eqref{rhoRm}.  

First note that 
\begin{eqnarray} \nonumber 
\um_{\hs 2}(f\ot g)(a)\!\!\!  &\oeq{\eqref{mkDn1},\eqref{us}} &\!\!\! 
\ipp{\Psi_{UU}^2(f\ot g)}{\Psi_{HH}^{-1}\! \circ\! \Delta(a)}
\oeq{\eqref{UU},\eqref{pbbraid}}  \ipp{f\ot g}{\Psi_{HH}\! \circ\! \Delta(a)} \\
&\oeq{\eqref{braidip}} &\!\!\! f({a_{(1)}}^{\!\br{2}})\hs g({a_{(2)}}^{\!\br{1}})   
\oeq{\eqref{ust}} f\ust g(a).      \label{vust}
\end{eqnarray}
 Starting with the right hand side of \eqref{rhoRm}, we compute that 
  \begin{align*}
 &u_{(0)} \hs v_{(0)}{\!}^{\cBr{1}} (u_{(1)}{\!}^{\cBr{2}}\ust v_{(1)})(a) 
 \oeq{\eqref{vust}} 
 u_{(0)} \hs v_{(0)}{\!}^{\cBr{1}} \, u_{(1)}{\!}^{\cBr{2}\!}\hsp(a_{(1)}{\!}^{\br{2}})\, 
 v_{(1)}\hsp (a_{(2)}{\!}^{\br{1}}) \\
& \oeq{\eqref{fav}} 
 u_{(0)} \hs v_{(0)}{\!}^{\pBr{2}\,} \, u_{(1)}\hsp(a_{(1)}{\!}^{\br{2}\pBr{1}\hs}\,)\, v_{(1)}\hsp (a_{(2)}{\!}^{\br{1}}) 
 \oeq{\eqref{SrR}} 
  u_{(0)} \hs \big(a_{(2)}{\!}^{\br{1}\pBr{1}_1} \la v^{\pBr{2}_1}\big)^{\!\pBr{2}_2}  \,\hs 
 u_{(1)}\hsp(a_{(1)}{\!}^{\br{2}\pBr{1}_2})\\
& \oeq{\eqref{nulinv}} 
 u_{(0)} \hs \big(a_{(2)}{\!}^{\br{1}\pBr{1}_1\pbr{2}} \la v^{\pBr{2}_1\pBr{2}_2}\big)^{\!}  \,\hs 
 u_{(1)}\hsp(a_{(1)}{\!}^{\br{2}\pBr{1}_2\pbr{1}}) \\
&\oeq{\eqref{VW},\eqref{brpbr}} 
u_{(0)} \hs \big(a_{(2)}{\!}^{\pBr{1}_2} \la v^{\pBr{2}_1\pBr{2}_2}\big)^{\!}  \,\hs 
 u_{(1)}\hsp(a_{(1)}{\!}^{\pBr{1}_1})
 \oeq{\eqref{PHW}} 
 u_{(0)} \hs \big(a^{\pBr{1}}{\!}_{(2)} \la v^{\pBr{2}\,}\hs\big)^{\!}  \,\hs 
 u_{(1)}\hsp(a^{\pBr{1}}{\!}_{(1)}) \\
 &\oeq{\eqref{SrR}} 
  (a^{\pBr{1}_1}{\hsp}_{(1)}{\!}^{\pBr{1}_2}\la u^{\pBr{2}_2}) \hs (a^{\pBr{1}_1}{\hsp}_{(2)} \la v^{\pBr{2}_1})  
  \oeq{\eqref{PHW}}  
   (a^{\pBr{1}_1\pBr{1}_2}{\hsp}_{(1)}\la u^{\pBr{2}_2\Br{1}}) \hs 
   (a^{\pBr{1}_1\pBr{1}_2}{\hsp}_{(2)}{\hsp}^{\Br{2}} \la v^{\pBr{2}_1})  \\
  &\oeq{\eqref{num}} 
   a^{\pBr{1}_1\pBr{1}_2} \la (u^{\pBr{2}_2}\hs v^{\pBr{2}_1} ) 
   \oeq{\eqref{VAm}} 
    a^{\pBr{1}\,} \la (u\hs v)^{\pBr{2}\,}  
    \oeq{\eqref{SrR}} 
    (u\hs v)_{(0)} \,  (u\hs v)_{(1)}\hsp(a), 
 \end{align*}
which implies \eqref{rhoRm}.  
Finally, if $1\in V$, then 
$1_{(0)}\, 1_{(1)}{\hsp}(a) \oeq{\eqref{SrR}}  a^{\pBr{1}\,} \la 1^{\pBr{2}\,}  
\oeq{\eqref{V1}}  a\la 1 \oeq{\eqref{nu1}}  1 \,\vare(a)$, from which we conclude 
that $\rho_R(1)=1\ot 1$. This finishes the proof that $V$ 
is a  braided right $U^{(2,-2)}$-comodule algebra. 
 
The opposite versions are proven analogously. 
\end{proof}

Taking $V:=H$ in the previous theorem and the multiplication of $H$ as left action, 
the right coaction $\rho_R : H \to H \ot U$ is determined by 
$$
\rho_R(a)(f\ot b)  \oeq{\eqref{rR}}  f(b^{\pbr{1}} a^{\pbr{2}}) 
\oeq{\eqref{brcop}} \ipp{\uD_{-2}(f)}{a\ot b}, 
$$
which is equivalent to 
$$
a_{(0)}(f)\, a_{(1)} = f_{(2)^{\backprime}}(a) \, f_{(1)^{\backprime}}, \qquad 
\uD_{\hs -2}(f) := f_{(1)^{\backprime}} \ot f_{(2)^{\backprime}}. 
$$
However, if we consider $H$ as a braided algebra with respect to the inverse braiding $\Psi_{HH}^{-1}$, then 
$\rho_R(a)(f\ot b) = f(b^{\br{1}} a^{\br{2}})$ and thus 
$$
f(a_{(0)})\, a_{(1)} = f_{(2)}(a) \, f_{(1)}, \qquad \uD(f):= f_{(1)}\ot f_{(2)}. 
$$
The corresponding left $U$-coaction $\rho_L(a) (b\ot f) =  f(a^{\br{1}} b^{\br{2}})$
satisfies 
$$
f(a_{(0)})\, a_{(-1)} = f_{(1)}(a) \, f_{(2)}.  
$$
These observations may be viewed as the dual version of Corollary \ref{nat}. 

Note that the coproduct of the dual coalgebra $(U,\uD,\eps)$ 
had to be changed to $\uD_{\hs -2}$ in Theorem \ref{mmm}. 
Therefore, combining repeatedly Theorems \ref{mcom} and \ref{mmm} may 
turn $V$ into a (co)module for a whole family of (co)algebras, 
each time with respect to a potentially different (co)action.   
The starting point for this observation is the next corollary. 

\begin{corollary} \label{corind} 
(i) \,Let $(H,\Delta, \eps)$ be a braided coalgebra and $V$ a braided right $H$-comodule 
with co\-action $\rho_{R} : V\to V\ot H$. 
Let $U\subset H'$ be a non-degenerate unital subalgebra with product given by \eqref{us}
such that $\Psi_{UU}$,  $\Psi_{UH}^\circ$ and $\Psi_{UV}^\circ$ 
introduced in \eqref{pbbraid}, \eqref{ipbb} and \eqref{cPUV}, respectively, are bijective.   
Assume that 
\[ \label{PHVcc}
\Psi_{HV}^{\circ\circ}: H \ot V \lra (V'\ot U)', \quad 
\Psi_{HV}^{\circ\circ}(a\ot v)(e\ot f):=\ipp{e\ot a}{\Psi_{UV}^{\circ-1}(v\ot f )}, 
\]
yields a bijection $\Psi_{HV}^{\circ\circ}: H \ot V \to V\ot H \subset (V'\ot U)'$. 
If the map 
\[ \label{coactUH}
\rho_{R}^{\backprime} : V\to (V'\ot U)', \quad 
\rho_{R}^{\backprime}(v)(e\ot f) := e(v_{(0)})\, f^{{\cBr{1}}^\backprime}\!(v_{(1)}{\!}^{{\cBr{2}}^\backprime}\hs)
\] 
satisfies $\rho_{R}^{\backprime} : V\to  V\ot H  \subset (V'\ot U)'$, then 
it defines a right $H$-coaction that turns $V$ into a braided right $H$-comodule with respect to the 
braiding $\Psi_{HV}^{\circ\circ}$ and the 
coproduct $\Delta_{-2} := \Psi_{HH}^{-2} \circ \Delta$ on $H$. 

If $H$ is a braided bialgebra, $V$ is a braided right $H$-comodule algebra, 
and  $U$ is a left dual of $H$ as in Theorem \ref{dH}, 
then the right coaction defined in \eqref{coactUH} 
turns $V$ into a braided right $H^{(2,-2)}$-comodule algebra. 

(ii) \,Let $(H,m)$ be a braided unital algebra 
and \,$V$ a braided left $H$-module 
with left action $\nu_{L} : H\ot V\to V$.    
Let $U$ be a non-degenerate subcoalgebra of $H^{\uc}$ as defined in Proposition~\ref{lcop} 
such that $\Psi_{UU}$ and $\Psi_{UV}^\circ$
given in \eqref{pbbraid} and \eqref{cPUV}, respectively, are bijective, 
and suppose that  $\rho_R$ defined in \eqref{rR} satisfies \eqref{rRVU}. 
Assume that the map $\Psi_{HV}^{\circ\circ}$ introduced in \eqref{PHVcc} yields a bijection 
$\Psi_{HV}^{\circ\circ}: H \ot V \to V\ot H$, and that 
there exists a non-degenerate subspace $W\subset V'$  such that 
\[ \label{PWHcc}
\Psi_{WH}^{\circ\circ}: W \ot H  \lra (U\ot V)', \quad 
\Psi_{WH}^{\circ\circ}(e\ot a)(f\ot v):=\ipp{f\ot e}{\Psi_{HV}^{\circ\circ}(a\ot v )}, 
\]
yields a bijection $\Psi_{WH}^{\circ\circ}: W \ot H \to H\ot W\subset (U\ot V)'$.
Then the map $\nu_{L}^\backprime : H\ot V \to W'$, 
\[ \label{actUH}
\nu_{L}^\backprime (a\ot v)(e)  :=  
\ip{\cdot}{\hsp\cdot} \circ (\nu_{L} \ot \id  )\circ  
(\Psi_{HV}^{-1} \ot \id )\circ (\id \ot \Psi_{WH}^{\circ\circ} )\circ(v\ot e \ot a), 
\]
turns $V$ into a braided left $H$-module with respect to the braiding $\Psi_{HV}^{\circ\circ}$ 
and the multiplication $m_{-2}:= m\circ \Psi_{HH}^{-2}$ on $H$.  

If $H$ is a braided bialgebra, $V$ is a braided left $H$-module algebra, 
and  $U$ is a left dual of $H$ as in Theorem \ref{dH}, 
then the left action defined in \eqref{actUH} 
turns $V$ into a braided left $H^{(-2,2)}$-module algebra. 

Analogous statements hold for the opposite versions. 
\end{corollary}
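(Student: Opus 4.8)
The plan is to obtain both parts by \emph{composing} Theorems~\ref{mcom} and~\ref{mmm}, using the reflexivity statement of Proposition~\ref{propdual} (together with the twisting bookkeeping of Proposition~\ref{Pdual}) to reinterpret the doubly dualized object as an integer twist of $H$ itself. No genuinely new structural verification is needed; what is required is a careful matching-up of hypotheses and of the explicit formulas.

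For part~(i): since $U$ is a left dual of $H$, Theorem~\ref{mcom} first turns the braided right $H$-comodule $V$ into a braided left $U$-module with action $\nu_L(f\ot v)=\ip{f}{v_{(1)}{}^{\pBr{1}}}\,v_{(0)}{}^{\pBr{2}}$ and braiding $\Psi_{UV}^\circ$, and into a left $U$-module algebra when $H$ is a bialgebra and $V$ a comodule algebra. By Proposition~\ref{propdual} (concretely the identities $\iota\circ m=\um\circ(\iota\ot\iota)$ and~\eqref{dud} from its proof), the canonical image $\iota(H)$ is a non-degenerate unital subalgebra of $U'$ and a subcoalgebra of $U^{\uc}$ with structures~\eqref{us} and~\eqref{brcop}; bijectivity of $\Psi_{UU}$ yields that of $\Psi_{\iota(H)\iota(H)}$, and comparing~\eqref{PHVcc} with~\eqref{cPUV} shows that $\Psi_{HV}^{\circ\circ}$ is exactly the braiding of type $\Psi^\circ$ between $\iota(H)\subset U'$ and the left $U$-module $V$. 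Hence Theorem~\ref{mmm}, applied with algebra $U$, module $V$ and subcoalgebra $\iota(H)\subset U^{\uc}$, turns $V$ into a braided right $\iota(H)^{(2,-2)}$-comodule, and a comodule algebra in the bialgebra case. Transporting along the isomorphism $\iota\colon H\to\iota(H)$ of braided (Hopf) bialgebras, and noting that $\uD$ on $\iota(H)$ corresponds to $\Delta$ on $H$ so that $\uD_{-2}$ corresponds to $\Delta_{-2}=\Psi_{HH}^{-2}\circ\Delta$, gives the asserted braided right $H^{(2,-2)}$-comodule (resp.\ comodule algebra) structure. Substituting the above $\nu_L$ into the coaction formula $\rho_R(v)(e\ot a)=e(a^{\pBr{1}}\la v^{\pBr{2}})$ of Theorem~\ref{mmm} and simplifying with the compatibility of $\Psi_{UV}^\circ$ with $\rho_R$ (Lemma~\ref{LUVH} and~\eqref{invPHV}) reproduces~\eqref{coactUH}.

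For part~(ii) one runs the same composition in the opposite order. Theorem~\ref{mmm} first turns the braided left $H$-module $V$ into a braided right $U^{(2,-2)}$-comodule (this is where the standing hypothesis~\eqref{rRVU} is used), and into a comodule algebra when $H$ is a bialgebra and $V$ a module algebra. By Propositions~\ref{propdual} and~\ref{Pdual}, $\iota(H)^{(-2,2)}$ is a left dual of $U^{(2,-2)}$, hence a non-degenerate unital subalgebra of $(U^{(2,-2)})'$ with product~\eqref{us}; the hypotheses that $\Psi_{HV}^{\circ\circ}$ and that $\Psi_{WH}^{\circ\circ}$ (for a suitable non-degenerate $W\subset V'$) be bijective are precisely the Lemma~\ref{dbvs}- and Lemma~\ref{LUVH}-type conditions guaranteeing that the braidings required by Theorem~\ref{mcom} between $\iota(H)^{(-2,2)}$ and $V$ are well defined and bijective. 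Applying Theorem~\ref{mcom} then makes $V$ a braided left $\iota(H)^{(-2,2)}$-module, a module algebra in the bialgebra case, and transporting along $\iota$ gives the braided left $H^{(-2,2)}$-module (algebra) structure. Unwinding the action formula $\nu_L(f\ot v)=\ip{f}{v_{(1)}{}^{\pBr{1}}}\,v_{(0)}{}^{\pBr{2}}$ of Theorem~\ref{mcom} with the coaction~\eqref{rR} inserted, and with $\Psi_{WH}^{\circ\circ}$ realizing the module braiding, yields the composite expression~\eqref{actUH}. The opposite versions follow verbatim with every left-handed datum, braiding and formula replaced by its right-handed counterpart, exactly as in Theorems~\ref{mcom} and~\ref{mmm}.

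The conceptual skeleton, ``compose the two theorems and invoke reflexivity'', is immediate; the real work, and the expected main obstacle, is the bookkeeping. One must check that the iterated auxiliary braidings $\Psi_{HV}^{\circ\circ}$ and $\Psi_{WH}^{\circ\circ}$ of~\eqref{PHVcc}--\eqref{PWHcc} coincide, under the embedding $\iota$, with the $\Psi^\circ$-type braidings that Theorems~\ref{mcom} and~\ref{mmm} demand between the doubly dualized copy of $H$ and $V$, and that the composed (co)action collapses to the closed formulas~\eqref{coactUH} and~\eqref{actUH}. Both are routine but delicate Sweedler-notation manipulations using~\eqref{phh}--\eqref{ophcph}, \eqref{ipBr} and the compatibilities recorded in Lemmas~\ref{lembraided}--\ref{bUW}; they are the only non-formal content of the argument.
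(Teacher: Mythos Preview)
Your proposal is correct and follows essentially the same approach as the paper: apply Theorems~\ref{mcom} and~\ref{mmm} in succession with the roles of $H$ and $U$ interchanged (via Proposition~\ref{propdual}), then do the Sweedler-notation bookkeeping to identify the resulting (co)action with the explicit formulas \eqref{coactUH} and \eqref{actUH}. One small correction: in part~(ii) the hypothesis on $\Psi_{WH}^{\circ\circ}$ is not needed to invoke Theorem~\ref{mcom} (which only requires bijectivity of $\Psi_{\iota(H)\iota(H)}$ and $\Psi_{HV}^{\circ\circ}$); rather, the paper uses $\Psi_{WH}^{\circ\circ}$ solely to rewrite the abstract left action in the closed form~\eqref{actUH}.
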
 
\begin{proof} 
\,$(i)$ \,First note that the braided coalgebra $(H,\Delta, \eps)$ and the subalgebra $U\subset H'$
in {\it (i)} satisfy the assumptions of Theorem \ref{mcom}. Hence the left action \eqref{nuL} 
turns $V$ into a braided left $U$-module with respect to the braiding $\Psi_{UV}^\circ$. 
From \eqref{dud}, it follows that $H\cong \iota(H)\subset U^{\uc}$ is a non-degenerate subcoalgebra. 
Furthermore, $\Psi_{HV}^{\circ\circ}$ corresponds to $\Psi_{UV}^\circ$ in Theorem \ref{mmm}, 
where $H$ and $U$ swap the roles in the present situation. 
Applying Theorem \ref{mmm}, we conclude that the $H$-coaction defined in \eqref{rR} turns $V$ 
into a braided right $H$-comodule with respect to the braiding $\Psi_{HV}^{\circ\circ}$ 
and the coproduct $\Delta_{-2} = \Psi_{HH}^{-2}\circ \Delta$. But before we can apply 
Theorem~\ref{mmm}, we need prove that the coaction resulting from this construction 
coincides with  \eqref{coactUH} so that \eqref{rRVU} is fulfilled by assumption. 

Let $v\in V$. To distinguish the new coaction $\rho_{R}^{\backprime}$ from the original one, we use 
the Sweedler notation $\rho_{R}^{\backprime}(v) := v_{(0)^{\backprime}}\ot v_{(1)^{\backprime}}$. 
Evaluating  $\rho_{R}^{\backprime}(v)\in V\ot H$ 
on $e\ot f\in W\ot U$ yields 
\begin{eqnarray*} 
e(v_{(0)^{\backprime}})\, f(v_{(1)^{\backprime}}) 
\!\!&\oeq{\eqref{rR}} &\!\!
e( f^{{\cBr{1}}^\backprime\!} \la v^{{\cBr{1}}^\backprime\!}\,) 
 \oeq{\eqref{nuL}} 
 f^{{\cBr{1}}^\backprime\!}\hsp(v^{{\cBr{2}}^\backprime\!}{\hsp}_{(1)}{\!}^{\pBr{1}\,\hs} )\, 
 e( v^{{\cBr{2}}^\backprime\!}{\hsp}_{(0)}{\!}^{\pBr{2}\,\hs})\\
 \!\!&\oeq{\eqref{cPHV}} &\!\! 
 f^{{\cBr{1}}^\backprime\cBr{2}}\hsp(v^{{\cBr{2}}^\backprime\!}{\hsp}_{(1)})\, 
 e( v^{{\cBr{2}}^\backprime\!}{\hsp}_{(0)}{\!}^{\cBr{1}}) ,   
\end{eqnarray*} 
which is equivalent to 
\[ \label{rohn}
v_{(0)^{\backprime}}\, f(v_{(1)^{\backprime}}) 
= f^{{\cBr{1}}^\backprime\cBr{2}}\hsp(v^{{\cBr{2}}^\backprime\!}{\hsp}_{(1)})\, 
  v^{{\cBr{2}}^\backprime\!}{\hsp}_{(0)}{\!}^{\cBr{1}}.
\]
Furthermore, 
\begin{eqnarray*}
 f^{\cBr{2}}\!(a)\, v^{\cBr{1}}{\!}_{(0)} \ot v^{\cBr{1}}{\!}_{(1)} 
\!\!\!\!&\oeq{\eqref{fav}} &\!\!\!\!
 f(a^{\pBr{1}}\,\hs)\, v^{\pBr{2}}{\!}_{(0)} \ot v^{\pBr{2}}{\!}_{(1)} 
 \oeq{\eqref{invPHV}}  
  f(a^{\pbr{1}\pBr{1}}\,\hs)\, v_{(0)}{\!}^{\pBr{2}} \ot v_{(1)}{\!}^{\pbr{2}}\\
 \!\!\!\!&\oeq{\eqref{ophcph},\eqref{fav}}  &\!\!\!\! 
  f^{\cBr{2}_1\cBr{2}_2}(a)\, v_{(0)}{\!}^{\cBr{1}_1} \ot v_{(1)}{\!}^{\cBr{1}_2}
\end{eqnarray*}
gives  
$$
(\rho_R\ot\id)\circ \Psi_{UV}^\circ = ( \id \ot \Psi_{UH}^\circ)\circ (\Psi_{UV}^\circ \ot \id )\circ (\id\ot\rho_R). 
$$ 
Applying on both sides the corresponding inverse braidings, we obtain 
\[ \label{invn}
 ( \id \ot \Psi_{UH}^{\circ-1})\circ(\rho_R\ot\id)
= (\Psi_{UV}^{\circ} \ot \id )\circ (\id\ot\rho_R)\circ \Psi_{UV}^{\circ -1}. 
 \] 
Inserting \eqref{invn} into \eqref{rohn} yields 
\begin{eqnarray*}
v_{(0)^{\backprime}}\, f(v_{(1)^{\backprime}}) 
\oeq{\eqref{invn},\eqref{rohn}}   
f^{{\cBr{1}}^\backprime}\!(v_{(1)}{\!}^{{\cBr{2}}^\backprime}\hs)\, v_{(0)}. 
\end{eqnarray*}
This proves \eqref{coactUH} for the given right $H$-coaction $\rho_R(v) = v_{(0)} \ot v_{(1)}$ on $V$ 
and with the braiding $\Psi_{UH}^{\circ\,-1}(a\ot f) = f^{{\cBr{1}}^\backprime} \ot a^{{\cBr{2}}^\backprime}$ 
on $H\ot U$. 

Assume now that $H$ is a braided bialgebra and $V$ is a braided right $H$-comodule algebra. 
Then, by Theorem \ref{mcom}, $V$ becomes a left $U$-module algebra. 
In Proposition \ref{propdual}, it has been shown that $H\cong \iota(H)$  is a left dual of $U$. 
Applying Theorem \ref{mmm} with the roles of $U$ and $H$ reversed shows that the new 
coaction $\rho_{R}^{\backprime}$ turns $V$ into a 
braided right $H^{(2,-2)}$-comodule algebra. 

\,$(ii)$ \,Consider a braided unital algebra $(H,m)$, a braided left $H$-module $V$ and 
a non-degenerate subcoalgebra $U \subset H^{\uc}$ as described in {\it (ii)}. 
From Theorem \ref{mmm}, we conclude that $V$ becomes a 
braided right $U$-comodule with respect to 
the coproduct $\uD_{\hs -2}:= \Psi_{UU}^{-2}\circ \uD$ on $U$, 
the right coaction defined in \eqref{rR}, and 
the braiding $\Psi_{UV}^\circ$. 
As in the proof of  {\it (i)}, $\Psi_{HV}^{\circ\circ}$ 
corresponds to the braiding $\Psi_{UV}^\circ$ in Theorem~\ref{mcom} 
with the roles of $H$ and $U$ reversed.  
Furthermore, the multiplication on $H\cong \iota(H)\subset U'$ 
corresponding to the coproduct $\uD_{\hs -2}$ on $U$  in \eqref{us} 
is given by $m_{-2}= m\circ \Psi_{HH}^{-2}$.  Indeed, for all $a,b\in H$ and $f\in U$, we have  
\begin{eqnarray*}  
\ip{\um(\iota(a)\ot \iota(b))}{f}\!\!\!&\oeq{\eqref{us}}& \!\!\!\ipp{a\ot b}{\Psi_{UU}^{-1}\! \circ\! \uD_{\hs -2}(f)} 
 \oeq{\eqref{mkDn2},\eqref{UU}}  \ipp{\Psi_{HH}^{-3}(a\ot b)}{\uD(f)}  \\ 
 \!\!\!&\oeq{\eqref{brcop}}& \!\!\! \ip{m\circ \Psi_{HH}^{-2}(a\ot b)}{f} 
 \oeq{\eqref{mkDn1}} \ip{m_{-2}(a\ot b)}{f}.  
\end{eqnarray*} 
Applying now Theorem~\ref{mcom} shows that $V$ becomes a 
braided left $H$-module with respect to the 
 left $H$-action defined in \eqref{nuL}, the 
multiplication $m_{-2}:= m\circ \Psi_{HH}^{-2}$ on $H$, 
and the braiding $\Psi_{HV}^{\circ\circ}$. 

Let  $\nu_{L}^\backprime$ denote the new left $H$-action on $V$. 
To show that $\nu_{L}^\backprime$ is given by \eqref{actUH}, 
we use the Sweedler-type notation 
$$
\Psi_{HV}^{\circ\circ}(a\ot v) := v^{\Br{1}^{\circ\circ}} \ot a^{\Br{2}^{\circ\circ}} , \quad 
\Psi_{WH}^{\circ\circ}(e\ot a) := a^{\Br{1}^{\circ\circ}} \ot e^{\Br{2}^{\circ\circ}}, 
$$
for $a\in H$, $v\in V$ and $e\in W$,  
and compute that 
\begin{eqnarray} \nonumber 
e(\nu_{L}^\backprime(a\ot v))\!\! &\oeq{\eqref{nuL}} &\!\!
e(v_{(0)}{\!}^{{\cBr{2}}^\backprime}) \, v_{(1)}{\!}^{{\cBr{1}}^\backprime}\!(a)
 \,\oeq{\eqref{PHVcc}} \,
e(v_{(0)}{\!}^{\Br{1}^{\circ\circ}}) \,  v_{(1)}\hsp(a^{\Br{2}^{\circ\circ}}) \\ \label{bnuL}
&\oeq{\eqref{PWHcc}} &\!\!
e^{\Br{2}^{\circ\circ}}\!(v_{(0)}) \,  v_{(1)}\hsp(a^{\Br{1}^{\circ\circ}}) 
\oeq{\eqref{SrR}}  
e^{\Br{2}^{\circ\circ}}\!(a^{\Br{1}^{\circ\circ}\pBr{1}} \la  v^{\pBr{2}\,\hs}). 
\end{eqnarray} 
This shows that  $\nu_{L}^\backprime$ is given by \eqref{actUH}. 
In particular, we have  $\nu_{L}^\backprime : H\ot V \to V\subset W'$. 

If $H$ is a braided bialgebra, $V$ is a braided left $H$-module algebra, 
and $U$ is a left dual of $H$ as in Theorem \ref{dH}, then we conclude from 
Theorem \ref{mmm} that $V$ becomes a braided right $U^{(2,-2)}$-comodule algebra.  
Since $H^{(-2,2)}$ is a left dual of $U^{(2,-2)}$ by Proposition \ref{Pdual}, 
we deduce from Theorem \ref{mcom} that the left action $\nu_{L}^\backprime$  
turns $V$ into a braided left $H^{(-2,2)}$-module algebra. 

The opposite versions versions are proven analogously. 
\end{proof} 

The results of the last corollary may be seen as an induction step. 
Starting with an $H$-(co)module $V$, Corollary \ref{corind} produces another 
$H$-(co)action on $V$ but for the modified coproduct  $\Delta_{-2}$ or product $m_{-2}$. 
If the assumptions of the corollary are again satisfied, we can repeat the process,  
and so on, obtaining thus for all $n\in \N$ an $H$-(co)action on $V$ for the 
coproduct  $\Delta_{-2n}$ or the product $m_{-2n}$. 
If $H$ is a braided bialgebra and $V$ a braided (co)module algebra, 
then we get an coaction of the braided bialgebra $H^{(2n,-2n)}$ or an action 
of $H^{(-2n,2n)}$.

Theorem \ref{mcom} shows how to turn a braided comodule of a braided coalgebra into braided 
module of the dual algebra. However, 
it is more natural to dualize a coaction 
in such a way that a \emph{dual} space of the comodule becomes a module of the dual algebra. 
This will be done in the next proposition. 
Unlike Theorem \ref{mcom}, a braided comodule algebra 
will not dualize to braided module algebra. 
The correct way would be to dualize it to a braided module coalgebra but, 
as mentioned in Remark \ref{rem1}, we do not discuss these 
structures here. 
\begin{proposition}  \label{mco}
Let $H$ be a braided coalgebra and $V$ a braided right $H$-comodule 
with co\-action $\rho_R: V\to V\ot H$. 
Let $U\subset H'$ and $W\subset V'$ satisfy 
the assumptions of Lemmas \ref{lembraided},  \ref{dbvs}, \ref{LUVH} and \ref{bUW}  
which guarantee that the braidings $\Psi_{UU}$, $\Psi_{HW}^\circ$, $\Psi_{UV}^\circ$ and 
$\Psi_{WU}^\bullet$ are well-defined. Assume that $U\subset  H'$ is a (unital) subalgebra 
with respect to the product defined in \eqref{us}. Consider 
\[ \label{vRWU}
\nu_R : W\ot U \lra V', \quad \nu_R(e\ot f)(v) :=  f(v_{(1)}{\!}^{\pBr{1}}\,\hs)\, \hs e(v_{(0)}{\!}^{\pBr{2}}\,)
\] 
If $\nu_R : W\ot U \to W \subset V'$, then it defines a right $U$-action on $W$ such that $W$ becomes 
a braided right $U$-module with respect to the braidings $\Psi_{UU}^{-1}$  and  $\Psi_{WU}^\bullet$.  

For a braided left $H$-comodule $V$, it is required that $U$ and $W$ satisfy 
the assumptions of Lemmas \ref{lembraided},  \ref{dbvs}, \ref{LUVH} and \ref{bUW}  
which guarantee that the braidings 
$\Psi_{UU}$, $\Psi_{WH}^\circ$, $\Psi_{VU}^\circ$ and $\Psi_{UW}^\bullet$ 
are well-defined. If 
\[
\nu_L : U\ot W \lra V', \quad \nu_L(f\ot e)(v) :=  f(v_{(-1)}{\!}^{\pBr{2}}\,\hs)\, \hs e(v_{(0)}{\!}^{\pBr{1}}\,)
\] 
yields a map $\nu_L : U\ot W \to W\subset V'$, then it defines a left $U$-action on $W$ such that $W$ becomes 
a braided left $U$-module with respect to the braidings $\Psi_{UU}^{-1}$  and $\Psi_{UW}^\bullet$. 
\end{proposition}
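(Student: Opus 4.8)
The plan is to verify directly that $\nu_R$ (and then the opposite $\nu_L$) satisfies the module axioms \eqref{amu} and the braiding-compatibility condition \eqref{bmu}, working as in the proof of Theorem \ref{mcom}. Since $V$ separates the points of $V'$, and by Lemma \ref{bUW} the braiding $\Psi_{WU}^\bullet$ can be computed through $\Psi_{WH'}^\bullet$ or equivalently $\Psi_{V'U}^\bullet$, every identity can be checked by evaluating on an arbitrary $v\in V$ and rewriting everything in terms of the given braidings $\Psi_{HH}$, $\Psi_{VH}$ and the coaction $\rho_R$. The starting observation is that \eqref{vRWU} is equivalent to the Sweedler-type identity $(e\ra f)(v) = f(v_{(1)}{\!}^{\pBr{1}})\,e(v_{(0)}{\!}^{\pBr{2}})$, i.e.\ $(e\ra f)(v) = e\big((v_{(1)}{\!}^{\pBr{1}}$-evaluated against $f)\cdot v_{(0)}{\!}^{\pBr{2}}\big)$, which one should think of as ``$e\ra f$ acts on $v$ by first applying $\rho_R$, then using $\Psi_{VH}^{-1}$ to move $v_{(1)}$ past $v_{(0)}$, then pairing with $f$.'' Because the product on $U$ is the one from \eqref{us}, with braiding $\Psi_{HH}^{-1}$ inside, the associativity axiom will match up exactly when one uses \eqref{rohD} for iterated coactions, \eqref{invPHV} (the inverse-braiding form of \eqref{brR}), the Yang--Baxter relation \eqref{WV} for the mixed braiding, and \eqref{us} read backwards. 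The unit axiom $\nu_R(e\ot\eps) = e$ is immediate from $\eps$ being the counit of $H$ together with \eqref{rohD} and \eqref{ipBr}.

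Concretely, first I would check $\nu_R\circ(\nu_R\ot\id) = \nu_R\circ(\id\ot\ust)$. Evaluating the left-hand side on $v$ gives, after unfolding \eqref{vRWU} twice, an expression of the form $g(v_{(1)}{}^{\cdots})\,f(v_{(0)(1)}{}^{\cdots})\,e(v_{(0)(0)}{}^{\cdots})$ with several nested inverse braidings; applying coassociativity of $\rho_R$ from \eqref{rohD} and then the compatibility \eqref{invPHV} relocates the braidings, after which a single use of \eqref{WV} and of the Yang--Baxter relation \eqref{YBE} (in back-prime form \eqref{YBe}) brings the $f$- and $g$-slots adjacent; recognizing \eqref{us} then collapses $f,g$ into $f\ust g$, giving the right-hand side. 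This is the direct analogue of the computation $f\la(g\la v) = (f\ust g)\la v$ in the proof of Theorem \ref{mcom}, with the roles of coaction and braiding played by the dual objects. Next, for \eqref{bmu} I would use the explicit form of $\Psi_{WU}^\bullet$ from Lemma \ref{bUW}, namely $\Psi_{WU}^\bullet(e\ot f)(a\ot v) = \ipp{e\ot a}{\Psi_{UV}^\circ(f\ot v)}$, together with the pointwise identity $f^{\cBr{2}}(a)\,v^{\cBr{1}} = f(a^{\pBr{1}})\,v^{\pBr{2}}$ (the analogue of \eqref{fav}), and push both sides onto the spaces where $\Psi_{HH}$ and $\Psi_{VH}$ live; the relation then reduces to \eqref{WV} plus \eqref{brpbr} and \eqref{fagb}, exactly mirroring the verification of \eqref{bnu} in Theorem \ref{mcom}.

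I expect the main obstacle to be purely bookkeeping: keeping track of the chronological indices on the nested inverse braidings $\pBr{k}$ and $\pbr{k}$ and making sure the Yang--Baxter application \eqref{YBe} is invoked with the correct crossings, since here \emph{two} different inverse braidings ($\Psi_{HH}^{-1}$ from the product \eqref{us} and $\Psi_{VH}^{-1}$ from the coaction side) interact. A secondary subtlety is that $\Psi_{WU}^\bullet$ is a ``$\bullet$''-type braiding built from inverses, so the module is a braided \emph{right} $U$-module with respect to $\Psi_{UU}^{-1}$ rather than $\Psi_{UU}$; one should therefore invoke Lemma \ref{Linv} at the outset to know that $\Psi_{UU}^{-1}$ is a legitimate braiding on $U$ compatible with the product $\ust$ (which follows by combining Proposition \ref{lust} with Lemma \ref{Linv}), and that $W$ is a right $U$-braided vector space with respect to $\Psi_{WU}^\bullet$ (which is part of the conclusion of Lemmas \ref{dbvs} and \ref{bUW}). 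Once these framework facts are in place, the two axioms follow by the term-rewriting described above, and the opposite version for a braided left $H$-comodule is obtained by the symmetric computation with $\rho_L$, $\Psi_{UW}^\bullet$ and \eqref{invPHV} replaced by its left-handed counterpart.
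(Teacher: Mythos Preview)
Your approach is correct and will go through, but the paper takes a shortcut that spares you roughly half the work. The key observation you almost make but do not exploit is that the right action \eqref{vRWU} is precisely the transpose of the left action $\nu_L$ from Theorem~\ref{mcom}: since $\nu_L(f\ot v)=f(v_{(1)}{\!}^{\pBr{1}}\,)\,v_{(0)}{\!}^{\pBr{2}}$, one has $(e\ra f)(v)=e(f\la v)$ for all $e\in W$, $f\in U$, $v\in V$. Associativity and the unit axiom \eqref{amu} then follow \emph{immediately} from the corresponding facts for $\nu_L$ already established in Theorem~\ref{mcom}, with no need to redo the coassociativity/Yang--Baxter manipulation you outline. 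Only the braiding compatibility \eqref{bmu} with respect to $\Psi_{WU}^\bullet$ and $\Psi_{UU}^{-1}$ requires a fresh computation, and there your plan matches the paper's exactly: unfold $\Psi_{WU}^\bullet$ via both descriptions in Lemma~\ref{bUW}, transfer to $H$ and $V$ using \eqref{cPHV}, \eqref{cHV} and \eqref{cphh}, and reduce to \eqref{VW} and \eqref{invPHV}.

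A minor notational point: the product on $U$ in this proposition is $\um$ from \eqref{us}, not $\ust$ from \eqref{ust}; you reference the correct equation but write the wrong symbol. This does not affect the argument.
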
 
\begin{proof} 
As customary, we denote the map defined in \eqref{vRWU} by $\nu_R(e\ot f) := e\ra f$ 
and the left action given in \eqref{nuL} by  $\nu_L(f\ot v) := f\la v$. 
Since 
$$
(e\ra f)(v)  \oeq{\eqref{vRWU}}  f(v_{(1)}{\!}^{\pBr{1}}\,\hs)\, e(v_{(0)}{\!}^{\pBr{2}}\,\hs)
 \oeq{\eqref{nuL}}  e(f\la v),\quad  f\in U, \ \, v\in V, \ \, e\in V', 
$$ 
it follows immediately from Theorem \ref{mcom} 
that \eqref{vRWU} defines a right $U$-action on $V'$. 
Assuming that $\nu_R : W\ot U \to W$, we need to prove the 
compatibility with the braiding $\Psi_{WU}^\bullet$.  

Although the proof of \eqref{bmu} goes along the lines of previous ones, 
we present the computations in order to show where all the listed braidings 
and Lemma \ref{bUW} are needed.  
Let now $f,g\in U$, $a\in H$, $e\in W$ and $v\in V$. 
Employing the Sweedler-type notation  $\Psi_{WU}^\bullet(e\ot f) := f^{\Br{1}^\bullet}\!\ot e^{\Br{2}^\bullet}$, 
we have 
\begin{align*}
&f^{\Br{1}^\bullet}\!(a)\, (e\ra g)^{\Br{2}^\bullet}\!(v) 
\oeq{\eqref{VUb}}
f^{\cBr{2}}\!(a)\, (e\ra g)(v^{\cBr{1}}) 
\oeq{\eqref{cPHV}}  f(a^{\pBr{1}}\,)\, (e\ra g)(v^{\pBr{2}}\,) \\
&\oeq{\eqref{vRWU}}  
f(a^{\pBr{1}_1}\,)\, g(v^{\pBr{2}_1}{\hsp}_{(1)}{\!}^{\pBr{1}_2})\, e(v^{\pBr{2}_1}{\hsp}_{(0)}{\!}^{\pBr{2}_2})
\oeq{\eqref{VW},\eqref{invPHV}} 
f(a^{\pBr{1}_2\pbr{1}})\, g(v_{(1)}{\!}^{\pBr{1}_1\pbr{2}})\, e(v_{(0)}{\!}^{\pBr{2}_1\pBr{2}_2}) \\
&\oeq{\eqref{cphh}, \eqref{cHV}} 
f^{\pbr{1}}(a^{\cBr{2}})\, g^{\pbr{2}}(v_{(1)}{\!}^{\pBr{1}}\,\hs)\, e^{\cBr{1}}(v_{(0)}{\!}^{\pBr{2}}\,\hs) 
\oeq{\eqref{WHb}} 
f^{\pbr{1}\Br{1}^\bullet}\!(a)\, g^{\pbr{2}}\!(v_{(1)}{\!}^{\pBr{1}}\,)\, e^{\Br{2}^\bullet}\!(v_{(0)}{\!}^{\pBr{2}}\,) \\
&\oeq{\eqref{vRWU}}  
f^{\pbr{1}\Br{1}^\bullet}\!(a)\, (e^{\Br{2}^\bullet}\! \ra g^{\pbr{2}})(v), 
\end{align*} 
which implies \eqref{bmu} for the braidings $\Psi_{WU}^\bullet$ and $\Psi_{UU}^{-1}$. 
Furthermore, we know from Lemma \ref{Linv} that $U$ is a braided algebra with respect 
to the braiding $\Psi_{UU}^{-1}$, so this finishes the proof the first part. 
The second part is proven similarly. 
\end{proof} 

The analog of dualizing a coaction to an action as in the last proposition 
consists in dualizing an action to obtain a coaction of the dual coalgebra on the dual space. 
This is the purpose of our last proposition. 
\begin{proposition}  \label{com}
Let $H$ be a braided unital algebra and $V$ a braided left $H$-module 
with action $\nu_L: H\ot V\to V$.  
Let $U\subset H'$ and $W\subset V'$ satisfy 
the assumptions of Lemmas \ref{lembraided},  \ref{dbvs}, \ref{LUVH} and \ref{bUW}  
which guarantee that the braidings $\Psi_{UU}$, $\Psi_{HW}^\circ$, $\Psi_{UV}^\circ$ and 
$\Psi_{WU}^\bullet$ are well-defined. 
Assume that $U\subset H^{\uc}$ is a subcoalgebra as in Proposition \ref{lcop} but 
with respect to the braiding $\Psi_{HH}^{-1}$ on $H$, i.e., $\uD^{\!\circ}(f) \in U\ot U$ for all $f\in U$, where 
\[ \label{cuD} 
\ipp{\uD^{\!\circ}(f)}{a\ot b}:= \ip{f}{\,m\!\circ\! \Psi_{HH}^{-1} (a\ot b)}=f(b^{\pbr{1}}  a^{\pbr{2}}),  \qquad 
a,b\in H. 
\] 
Consider the linear map 
\[ \label{rLW}
\rho_L : W\lra (H\ot V)', \quad \rho_L (e)(a\ot v) := e(a^{\pBr{1}}\la v^{\pBr{2}}\,). 
\] 
If $\rho_L : W\to  U\ot W \subset (H\ot V)'$, then it defines a left $U$-coaction on $W$ such that $W$ becomes 
a braided left $U$-comodule with respect to the coalgebra $(U, \uD^{\!\circ}, \eps)$, 
the braiding $\Psi_{UU}^{-1}$ on $U$ and  the braiding $\Psi_{WU}^\bullet$ between $W$ and $U$.  

For a braided right $H$-comodule $V$, it is required that $U$ and $W$ satisfy 
the assumptions of Lemmas \ref{lembraided},  \ref{dbvs}, \ref{LUVH} and \ref{bUW}  
which guarantee that the braidings 
$\Psi_{UU}$, $\Psi_{WH}^\circ$, $\Psi_{VU}^\circ$ and $\Psi_{UW}^\bullet$ 
are well-defined. If the linear map 
\[
\rho_R : W\lra (V\ot H)', \quad 
 \rho_R (e)(v\ot a) := e( v^{\pBr{1}} \ra  a^{\pBr{2}}\,), 
\] 
fulfills $\rho_R : W\to  W \ot U \subset (V\ot H)'$, then it defines a right $U$-action on $W$ 
such that $W$ becomes a braided right $U$-comodule with respect to the coalgebra $(U, \uD^{\!\circ}, \eps)$, 
the braiding $\Psi_{UU}^{-1}$ on $U$ and  the braiding $\Psi_{UW}^\bullet$ between $U$ and $W$. 
\end{proposition}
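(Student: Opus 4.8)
The plan is to observe that Proposition \ref{com} is, in essence, the ``dual'' counterpart of Proposition \ref{mco}, obtained by replacing the coalgebra role of $H$ by its algebra role and vice versa. More precisely, I would first deduce the underlying $U$-(co)module structure from Theorem \ref{mmm} applied with the subcoalgebra $U$, and then transport it to the dual space $W\subset V'$ via the pairing. Concretely, by Theorem \ref{mmm} (applied with $H$ a braided unital algebra, $V$ a braided left $H$-module, and $U$ the non-degenerate subcoalgebra of $H^{\uc}$ satisfying \eqref{rRVU}), $V$ carries a right $U$-coaction $\rho_R^V$ with $\rho_R^V(v)(e\ot a)=e(a^{\pBr{1}}\la v^{\pBr{2}})$, making it a braided right $U$-comodule. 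Its ``transpose'' is precisely the map $\rho_L$ in \eqref{rLW}: writing $\rho_L(e)(a\ot v)=e(a^{\pBr{1}}\la v^{\pBr{2}})=e(v_{(0)})\,v_{(1)}(a)$ in the Sweedler notation of \eqref{SrR} shows, once the range condition $\rho_L:W\to U\ot W$ is assumed, that $\rho_L$ is a left $U$-coaction on $W$: coassociativity and counitality follow by dualizing \eqref{rohD} for $\rho_R^V$, using that $V$ separates the points of $V'$ and that the coproduct appearing is exactly $\uD^{\!\circ}$ from \eqref{cuD} (note $\uD^{\!\circ}$ is the coproduct of Proposition \ref{lcop} taken with respect to $\Psi_{HH}^{-1}$, which matches the coproduct $\uD_{-2}$ relevant to Theorem \ref{mmm} only after the appropriate reindexing — this bookkeeping is the first delicate point).

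Next I would verify the braiding compatibility \eqref{brL} for $\rho_L$ with respect to $\Psi_{UU}^{-1}$ on $U$ and $\Psi_{WU}^\bullet$ between $W$ and $U$. The strategy, exactly as in the proofs of Lemma \ref{LUVH} and Theorem \ref{mmm}, is to evaluate both sides of \eqref{brL} on arbitrary elements of $H\ot V$ (or the appropriate tensor product), use the ``move the new braiding to where the old one lives'' identities — here \eqref{fav} in the form $f^{\cBr{2}}(a)\,v^{\cBr{1}}=f(a^{\pBr{1}})\,v^{\pBr{2}}$ together with the defining formulas \eqref{VUb}, \eqref{WHb} for $\Psi_{WU}^\bullet$ and the identity $\Psi_{V'U}^\bullet\!\!\upharpoonright = \Psi_{WH'}^\bullet\!\!\upharpoonright$ from Lemma \ref{bUW} — and then push everything through the already-established compatibility \eqref{bnu} of $\nu_L$ with $\Psi_{UV}^\circ$ (equivalently, the coaction-side relation \eqref{brR} for $\rho_R^V$), invoking \eqref{WV}, \eqref{VW}, \eqref{brpbr} and \eqref{invPHV} where the inverse braidings intervene. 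Since $\nu_L$ is already known from the hypotheses to make $V$ a braided left $H$-module, and Lemma \ref{Linv} tells us $U$ is a braided coalgebra with respect to $\Psi_{UU}^{-1}$, no new structural input is needed beyond threading these relations together. The opposite version, for a braided right $H$-comodule $V$ with right action $\nu_R$, is handled by the mirror-image computation using \eqref{nurinv}, \eqref{invPHV} and the right-handed analogues of \eqref{VUb}–\eqref{WHb}.

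The main obstacle I anticipate is neither a deep idea nor a single hard estimate, but rather keeping the four families of braidings straight: one must check that $\Psi_{WU}^\bullet$ (defined in Lemma \ref{bUW} from $\Psi_{VU}^\circ$, i.e.\ from an inverse-of-an-inverse braiding) is the one that actually appears when dualizing $\rho_R^V$, and that it pairs correctly with $\Psi_{UU}^{-1}$ rather than $\Psi_{UU}$ — this is why the statement invokes Lemmas \ref{lembraided}, \ref{dbvs}, \ref{LUVH} and \ref{bUW} all at once. A secondary subtlety is the range hypothesis $\rho_L:W\to U\ot W$: it is genuinely an assumption (as in Proposition \ref{lcop} and Theorem \ref{mmm}), because the transpose of a coaction need not land in the algebraic tensor product, and all the ``is a braided comodule'' conclusions are conditional on it. Once these identifications are pinned down, the verifications of coassociativity, counitality and \eqref{brL} are routine Sweedler-calculus of the same flavour as the proofs of Theorem \ref{mmm} and Proposition \ref{mco}, and I would present them compactly, annotating each equality with the relation used, exactly in the style adopted throughout the paper.
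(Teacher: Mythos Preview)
Your overall picture is right in spirit, and the final paragraph about ``routine Sweedler-calculus'' would, if carried out directly, reproduce the paper's argument. But the strategy you articulate first --- deduce a right $U$-coaction $\rho_R^V$ on $V$ from Theorem~\ref{mmm} and then transport it to $W$ --- has a genuine gap: Theorem~\ref{mmm} requires the range condition \eqref{rRVU}, i.e.\ $\rho_R^V(V)\subset V\ot U$, and this is \emph{not} among the hypotheses of Proposition~\ref{com}. The only range assumption in the statement is $\rho_L(W)\subset U\ot W$, and that does not imply $\rho_R^V(V)\subset V\ot U$ (knowing that $e\mapsto \rho_L(e)$ lands in $U\ot W$ for each $e\in W$ says nothing about whether $v\mapsto \rho_R^V(v)$ lands in $V\ot U$ for each $v\in V$). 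So you cannot legitimately write $\rho_L(e)(a\ot v)=e(v_{(0)})\,v_{(1)}(a)$ and then ``dualize \eqref{rohD} for $\rho_R^V$'': the Sweedler symbols $v_{(0)},v_{(1)}$ are not available.

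The paper avoids this entirely by never invoking Theorem~\ref{mmm}. It proves coassociativity, counitality, and the compatibility \eqref{brL} directly from the definition \eqref{rLW}, the $H$-module axioms \eqref{anu} and their inverse-braiding form \eqref{nulinv}, and the Yang--Baxter-type relations \eqref{VW}, \eqref{AWm}; for \eqref{brL} it uses the defining identities of $\Psi_{WU}^\bullet$ from Lemma~\ref{bUW} together with \eqref{cphh} and \eqref{cPHV}. (A small side note: your phrase ``compatibility \eqref{bnu} of $\nu_L$ with $\Psi_{UV}^\circ$'' conflates two things --- \eqref{bnu} is compatibility of the $H$-action with $\Psi_{HV}$, and what is actually used is its inverse form \eqref{nulinv}.) Your identification $\uD^{\!\circ}=\uD_{-2}$ is correct and is the reason the coproduct in the statement matches what would come out of Theorem~\ref{mmm}, but since the detour through Theorem~\ref{mmm} is blocked by the missing hypothesis, you should drop it and proceed with the direct computation you sketch at the end.
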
 
\begin{proof} As in the proof the previous proposition, 
we prove \eqref{Droh} and \eqref{brL} in order to show that the correct braidings 
and the correct coproduct are used. 
Let $f\in U$, $a,b\in H$, $e\in W$ and $v\in V$. Then 
\begin{align*}
&e_{(-1)}(a)\, (e_{(0)})_{(-1)}(b)\,(e_{(0)})_{(0)}(v) \oeq{\eqref{rLW}} 
e(a^{\pBr{1}_2}\la(b^{\pBr{1}_1}\la v^{\pBr{2}_1})^{\pBr{2}_2}) \\
&\oeq{\eqref{VW},\eqref{nulinv}}    
e(a^{\pbr{1}\pBr{1}_1}\la(b^{\pbr{2}\pBr{1}_2}\la v^{\pBr{2}_1\pBr{2}_2}))
 \oeq{\eqref{anu}} e((a^{\pbr{1}\pBr{1}_1}b^{\pbr{2}\pBr{1}_2})\la v^{\pBr{2}_1\pBr{2}_2})\\
&\oeq{\eqref{AWm}}   
e((a^{\pbr{1}}b^{\pbr{2}})^{\pBr{1}}\la v^{\pBr{2}}\,) 
\oeq{\eqref{rLW}}  e_{(-1)}(a^{\pbr{1}}b^{\pbr{2}}) \, e_{(0)}(v) 
\oeq{\eqref{cuD}} (e_{(-1)})_{(1)}\hsp (a)\, (e_{(-1)})_{(2)}\hsp (b)\, e_{(0)}\hsp (v), 
\end{align*} 
so that the first relation of \eqref{Droh} holds for the coproduct given in \eqref{cuD}. 
Moreover, 
$$
\eps(e_{(-1)})\, e_{(0)}\hsp (v) = e_{(-1)}(1)\,e_{(0)}\hsp (v) 
\oeq{\eqref{rLW}} e( 1^{\pBr{1}}\la v^{\pBr{2}}\,) \oeq{\eqref{1W},\eqref{anu}} e(v).  
$$
Hence $\rho_L$ defines a left $U$-coaction for the coalgebra $(U, \uD^{\!\circ}, \eps)$. 

Recall from \eqref{cphh} that $\Psi_{UU}^{-1}$ is the braiding on $U$ 
which corresponds to $\Psi_{HH}^{-1}$ on $H$ 
according to \eqref{UU}. Now, 
\begin{align*} 
& f^{\Br{1}^\bullet\pbr{1}}(a)\,  e_{(-1)}{\!}^{\pbr{2}}\!(b) \, e_{(0)}{\!}^{\Br{2}^\bullet}\! (v)
 \oeq{\eqref{cphh}}
  f^{\Br{1}^\bullet}\!(a^{\pbr{1}}\hs)\,  e_{(-1)}\hsp (b^{\pbr{2}}) \, e_{(0)}{\!}^{\Br{2}^\bullet}\! (v) \\
 & \oeq{\eqref{cPHV},\eqref{VUb}} 
  f(a^{\pbr{1}\pBr{1}}\,)\,  e_{(-1)}\hsp (b^{\pbr{2}}) \, e_{(0)}\hsp (v^{\pBr{2}}\,) 
  \oeq{\eqref{rLW}}
   f(a^{\pbr{1}\pBr{1}_1})\,  e(b^{\pbr{2}\pBr{1}_2}\la v^{\pBr{2}_1\pBr{2}_2}) \\
   & \oeq{\eqref{VW},\eqref{nulinv}} 
    f(a^{\pBr{1}_2})\,  e((b^{\pBr{1}_1}\la v^{\pBr{2}_1})^{\pBr{2}_2}) 
   \oeq{\eqref{cPHV},\eqref{VUb}}  f^{\Br{1}^\bullet}\!(a)\, e^{\Br{2}^\bullet}\!(b^{\pBr{1}}\la v^{\pBr{2}}\,)\\
   &\oeq{\eqref{rLW}} 
   f^{\Br{1}^\bullet}\!(a)\,  e^{\Br{2}^\bullet}{\!\!}_{(-1)}(b)\, e^{\Br{2}^\bullet}{\!\!}_{(0)}(v), 
\end{align*} 
which proves \eqref{brL} for the braidings  $\Psi_{UU}^{-1}$ on $U$ and 
$\Psi_{WU}^\bullet$ between $W$ and $U$. 

The proof of the opposite version is similar. 
\end{proof}
Note that we used all lemmas of this section 
in the proofs of Propositions \ref{mco} and \ref{com}. 
As a closing remark, 
let us point out that the relevance of the inverse braidings  
is undeniable throughout this paper, not only for turning right braided vector spaces into  
left braided vector spaces and vice versa, but also 
in the definitions of the braidings of the type $\Psi_{XY}^\circ$, 
in the definitions of actions and coactions in Theorems \ref{mcom} and \ref{mmm}, 
and in our final propositions.

\section{Examples} 

\begin{example} \label{ex1} 
Finite-dimensional braided (co)algebras, bialgebras and Hopf algebras and their 
finite-dimensional (co)modules yield examples of all presented structures \cite{Maj95,T}. 
In the finite-dimensional situation, the non-degenerated dual spaces are obviously unique. 
Explicit formulas can be deduced by using the coevaluation map 
\[ \label{coev} 
\mathrm{coev}_H : \K \lra H \ot H', \quad \mathrm{coev}_H(1) = \msum{j=1}{n}\, e_j \ot e^j, 
\]
where $\{e_1,\ldots, e_n\} \subset H$ is a linear basis 
and $\{e^1,\ldots, e^n\} \subset H'$ its dual basis. 
Under the identification $\K\ot H\cong H\cong H\ot \K$, \eqref{coev} yields 
$ (\id\ot \mathrm{ev})  \circ (\mathrm{coev}_H\ot \id) = \id : H\to H$ 
and $ (\mathrm{ev}\ot\id )  \circ (\id\ot \mathrm{coev}_{H'}) = \id : H\to H$. 
From the dual versions of these identities, 
we obtain for instances the following formulas for the induced braidings  
$\Psi_{H'H'}$ and $\Psi_{H'H}$,  
$$
g^{\br{1}} \ot f^{\br{2}} = \msum{j,k=1}{n}\,g(e_j^{\br{1}})\,f(e_k^{\br{2}})\, e^j \ot e^k , 
\qquad a^{\br{1}} \ot f^{\br{2}} =  \msum{j=1}{n}\, \,f(e_j^{\br{1}})\, a^{\br{2}} \ot e^j, 
$$
and similar formulas for all other induced braidings. Furthermore,  
the coproduct \eqref{brcop} and 
the product \eqref{us}  may be written in the form 
$$
\uD(f) = \msum{j,k=1}{n}\, f(e_j^{\br{1}} \hs e_k^{\br{2}})\, e^j \ot e^k
$$
and 
$$
\um(f\ot g) = \msum{j=1}{n}\, f(e_{j(1)}{\!}^{\pbr{2}})\,g(e_{j(2)}{\!}^{\pbr{1}})\, e^j,  
$$
respectively. 
Analogous expressions can be derived for actions and coactions. 
For example, the coaction $\rho_R: H\to H\ot H'$ in \eqref{rR} 
for $V=H$ and with the multiplication as left action is given by 
$$
\rho_R(a) = \msum{j=1}{n}\, e_j^{\pbr{1}} a^{\pbr{2}} \ot e^j. 
$$
Similarly, the left action $\nu_L: H\ot H' \to H'$ in \eqref{nuL} for $U=H$, $V=H'$, and 
with the right $H'$-coaction on $H'$ given by the coproduct  
$\rho_R=\uD: H' \to H' \ot H'$ from \eqref{brcop}, becomes 
$$
\nu_L(a\ot f) =  \msum{j=1}{n}\,  f(e_ja)\,e^j. 
$$
\end{example} 
\begin{example}
Graded braided (co)algebras, bi- and Hopf algebras and their 
graded (co)\-modules \cite{NT} provide examples 
if the spaces of homogeneous elements are finite-dimen\-sional 
for all grades. The dual spaces may be given as the direct sum of the duals 
of the spaces of homogeneous elements so that   
the existence of the presented structures can be deduced grade by grade from Example \ref{ex1}. 
In this way, we obtain a large class of infinite-dimensional examples. 
With some care, these arguments can be generalized to filtered (co)algebras, bialgebras, and so on.  
\end{example}

 \section*{Acknowledgments}
 It is a pleasure to thank Andrzej Sitarz for stimulating discussions that inspired this report. 
The author gratefully acknowledges partial financial support 
from the CONACyT project A1-S-46784 and the CIC-UMSNH project 
 "Grupos cu\'anticos y geometr\'ia no conmutativa".

\end{document}